\newtheorem{lemma}{Lemma}[section]
\newtheorem{theorem}[lemma]{Theorem}
\newtheorem{conjecture}[lemma]{Conjecture}
\newtheorem{corollary}[lemma]{Corollary}
\newtheorem{proposition}[lemma]{Proposition}
\newtheorem{example}[lemma]{Example}
\newtheorem{remark}[lemma]{Remark}
\newtheorem{definition}[lemma]{Definition}
\newtheorem{problem}[lemma]{Problem}
\newcommand{\comment}[1]{}
\newcommand{\A}{\mathbb{A}}
\newcommand{\C}{\mathbb{C}}
\newcommand{\N}{\mathbb{N}}
\newcommand{\Z}{\mathbb{Z}}
\newcommand{\Q}{\mathbb{Q}}
\newcommand{\CE}{\mathcal{E}}
\newcommand{\CH}{\mathcal{H}}
\newcommand{\oCH}{\bar{\CE}}
\newcommand{\CK}{\mathcal{K}}
\newcommand{\CL}{\mathcal{L}}
\newcommand{\CS}{\mathcal{S}}
\newcommand{\CV}{\mathcal{V}}
\newcommand{\SBim}{\mathrm{SBim}}
\newcommand{\Hilb}{\mathrm{Hilb}}
\newcommand{\HH}{\mathrm{HH}}
\newcommand{\End}{\mathrm{End}}
\newcommand{\one}{\mathbbm{1}}
\newcommand{\ES}{\EuScript E}
\newcommand{\HS}{\EuScript H}
\newcommand{\PS}{\EuScript P}
\newcommand{\sS}{\EuScript S}
\newcommand{\Hom}{\operatorname{Hom}}
\newcommand{\Br}{\operatorname{Br}}
\newcommand{\Coh}{\operatorname{Coh}}
\newcommand{\Cone}{\operatorname{Cone}}
\newcommand{\Id}{\operatorname{Id}}
\newcommand{\Tr}{\operatorname{Tr}}
\newcommand{\Kar}{\operatorname{Kar}}
\newcommand{\ee}{\mathbf{e}}
\newcommand{\oH}{\bar{E}}
\newcommand{\oHS}{\bar{\ES}}
\newcommand{\comm}{\text{Comm}}
\newcommand{\ocomm}{\overline{\comm}}
\newcommand{\ecomm}{\emph{Comm}}
\newcommand{\fcomm}{\text{FComm}}
\newcommand{\ofcomm}{\overline{\fcomm}}
\newcommand{\efcomm}{\emph{FComm}}
\newcommand{\ecc}{\text{EComm}}
\newcommand{\oecc}{\overline{\ecc}}
\newcommand{\eecc}{\emph{EComm}}
\newcommand{\torus}{\C^* \times \C^*}
\newcommand{\ABr}{\mathrm{ABr}}
\newcommand{\AH}{\mathrm{AH}}
\newcommand{\ASBim}{\mathrm{ASBim}}
\newcommand{\tR}{\widetilde{R}}
\newcommand{\Comm}{\mathrm{Comm}}
\author{Eugene Gorsky}
\address{Department of Mathematics, University of California\\ One Shields Avenue, Davis CA 94702}
\email{egorskiy@math.ucdavis.edu}
\author{Andrei Negu\cb{t}}
\address{École Polytechnique Fédérale de Lausanne (EPFL), Lausanne, Switzerland}
\address{Simion Stoilow Institute of Mathematics (IMAR), Bucharest, Romania}
\email{andrei.negut@gmail.com}
\title{Hecke categories, idempotents, and commuting stacks}
\begin{document}

\begin{abstract} We formulate a connection between a topological and a geometric category. The former is the idempotent completion of the (horizontal) trace of the affine Hecke category, while the latter is the equivariant derived category of the (semi-nilpotent) commuting stack. This provides a more precise and improved version of our proposal in \cite{GN Tr}.

\end{abstract}

\maketitle

\section{Introduction}

\subsection{The problem} 

The main purpose of the present paper is to refine a conjecture of \cite{GN Tr} on a connection between topology (specifically affine Hecke categories) and geometry (specifically commuting stacks). Consider the affine Hecke category $\ASBim_n$ (explicitly defined as the homotopy category of type $\widehat{A}_{n-1}$ Soergel bimodules, see Subsection \ref{sec: affine hecke}), its horizontal trace $\Tr(\ASBim_n)$ and the idempotent completion of the latter
$$
\Tr(\ASBim_n)^{\Kar}
$$
Let us also consider the (semi-nilpotent) commuting stack
\begin{equation}
\label{eqn:comm intro}
\comm_n = \Big \{ X,Y \in \text{Mat}_{n \times n}, Y \text{ nilpotent}, [X,Y] = 0 \Big\} \Big/ GL_n
\end{equation}
with the $GL_n$ action given by simultaneous conjugation of the matrices $X$ and $Y$. There is an action of the torus $\torus$ given by rescaling the matrices $X$ and $Y$ independently.

\begin{problem}
\label{prob:main}

Construct a functor
\begin{equation}
\label{eqn:main functor}
\Tr(\ASBim_n)^{\emph{Kar}} \rightarrow D^b(\Coh_{\torus}(\ecomm_n))
\end{equation}
and identify its essential image. The two gradings by characters of $\torus$ of the category in the right-hand side should correspond to the homological and internal gradings of the category in the left-hand side, see \cite{GNR} for precise formulas on this connection.

\end{problem}

In \cite[Problem 1.1]{GN Tr}, we asked about the existence of a functor akin to  \eqref{eqn:main functor}, but without the word ``Kar" (signifying idempotent, or Karoubi completion) in the left-hand side and by foregoing the condition that $Y$ be nilpotent in \eqref{eqn:comm intro}. This question would be implied by the existence of the functor \eqref{eqn:main functor}, upon composing the latter with the canonical functor
$$
\Tr(\ASBim_n) \rightarrow \Tr(\ASBim_n)^{\Kar}
$$
and with the direct image functor that takes the stack \eqref{eqn:comm intro} to 
\begin{equation}
\label{eqn:stack replace}
\Big \{ X,Y \in \text{Mat}_{n \times n}, [X,Y] = 0 \Big\} \Big/ GL_n
\end{equation}
Thus, we may think of Problem \ref{prob:main} above as a refined version of the main question of \cite{GN Tr}. Even so, the computations that we will do in the present paper are of substantially different nature from the ones of \emph{loc. cit.}, as we will explain in Subsection \ref{sub:comparison}. 

\subsection{Geometry}
\label{sub:intro geom}

In the present paper, we explain how the functor in Problem \ref{prob:main} should behave at the level of objects, and why we believe that it is essentially surjective. To this end, let us note that the Grothendieck group of the right hand side of \eqref{eqn:main functor} was described in \cite{Integral} 
\begin{equation}
\label{eqn:k intro}
K_{\torus}(\comm_n) = \mathop{\bigoplus^{\frac {m_1}{n_1} \leq \dots \leq \frac {m_k}{n_k}}_{(m_i,n_i) \in \Z \times \N}}_{n_1+\dots+n_k = n} \Z[q_1^{\pm 1}, q_2^{\pm 1}] \cdot [\oCH_{m_1,n_1}] \star \dots \star [\oCH_{m_k,n_k}]
\end{equation}
where $\star$ denotes the $K$-theoretic Hall product (\cite{SV2}, see also Subsection \ref{sub:convolution}), $q_1,q_2$ denote the elementary characters of $\C^* \times \C^*$, and $\{[\oCH_{m,n}]\}_{(m,n) \in \Z \times \N}$ are certain $K$-theory classes on the commuting stack that we will recall in Subsection \ref{sub:shuffle formulas 2}. For general $(m,n) \in \Z \times \N$, it is not immediately clear how to lift the $K$-theory class $[\oCH_{m,n}]$ to an object in the derived category. However, formula \cite[equation (4.10)]{Integral} shows how to lift
$$
(1+q_1)(1+q_1+q_1^2) \dots (1+q_1+\dots+q_1^{d-1}) \cdot [\oCH_{m,n}]
$$
to an object in the derived category of $\comm_{n}$, for $d = \gcd(m,n)$. In particular, if $m$ and $n$ are coprime, there is an object (see \eqref{eqn:object h})
$$
\CH_{m,n} \in D^b(\Coh_{\torus}(\comm_n))
$$
whose $K$-theory class is precisely $[\oCH_{m,n}]$ in \eqref{eqn:k intro}. Motivated by parallels with the category $\Tr(\ASBim_n)$ (which we will recall in Subsection \ref{sub:intro top}), we propose the existence of objects $\oCH_{md,nd} \in D^b(\Coh_{\torus}(\comm_{nd}))$ for all $d \geq 1$ and coprime $(m,n) \in \Z \times \N$, as follows. 

\begin{conjecture}
\label{conj:objects intro}

For any coprime $(m,n) \in \Z \times \N$ and $d \geq 1$, there is an action of $S_d$ on
\begin{equation}
\label{eqn:conv intro geom}
\underbrace{\CH_{m,n} \star \dots \star \CH_{m,n}}_{d \text{ times}} \in D^b(\emph{Coh}_{\torus}(\ecomm_{nd}))
\end{equation}
If we let $\oCH_{md,nd}$ denote the $S_d$-antisymmetric part of the object \eqref{eqn:conv intro geom}, then
$$
[\oCH_{md,nd}] = (\text{the generator from \eqref{eqn:k intro}}) \in K_{\torus}(\ecomm_{nd})
$$
and the category $D^b(\emph{Coh}_{\torus}(\ecomm_n))$ is generated by the objects
\begin{equation}
\label{eqn:intro gens geom}
\Big \{ \oCH_{m_1,n_1} \star \dots \star \oCH_{m_k,n_k} \Big \}^{\frac {m_1}{n_1} \leq \dots \leq \frac {m_k}{n_k}}_{\substack{(m_i,n_i) \in \Z \times \N, \\ n_1+\dots+n_k = n}}
\end{equation}
(in the present paper, ``generate" means that any object in the category is isomorphic to a chain complex built out of the objects \eqref{eqn:intro gens geom} and their equivariant shifts). 

\end{conjecture}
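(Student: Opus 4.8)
We outline the approach we would take. It has three steps, carried out in order since each uses the previous one. \textbf{Step (a)}: construct the $S_d$-action on the convolution power $\CH_{m,n}^{\star d}$, and hence the object $\oCH_{md,nd}$. The plan is to produce, for coprime $(m,n)$, an involutive isomorphism $\sigma \colon \CH_{m,n}\star\CH_{m,n} \xrightarrow{\ \sim\ } \CH_{m,n}\star\CH_{m,n}$ from the geometry of the Hall correspondence: the convolution is computed by pushing a sheaf forward from the stack of two-step filtrations $0 \subset V_1 \subset V_2$ of a commuting pair with $V_1$ and $V_2/V_1$ of type $(m,n)$, and when the two factors coincide this correspondence --- together with the explicit rank-one model for $\CH_{m,n}$ recalled in Subsection~\ref{sub:shuffle formulas 2} --- carries a tautological $\Z/2$-symmetry. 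One must check that $\sigma^2=\mathrm{id}$ (rather than a sign twist) and that, extended to $d$ strands, the $\sigma$'s satisfy the braid relations, the relevant input being the associativity and coherence of the Hall convolution. This yields an action of $S_d$ on $\CH_{m,n}^{\star d}$, parallel to the braid-group/full-twist construction of the categorified colored antisymmetrizers on the $\Tr(\ASBim_n)$ side \cite{GNR}. As $D^b(\Coh_{\torus}(\comm_{nd}))$ is idempotent complete, we then set $\oCH_{md,nd}$ to be the image of the antisymmetrizing idempotent.

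\textbf{Step (b)}: identify $[\oCH_{md,nd}]$ with the prescribed generator of $K_{\torus}(\comm_{nd})$ in~\eqref{eqn:k intro}. The class of $\oCH_{md,nd}$ is the image of the antisymmetrizer, acting via the $S_d$-action of (a), on $[\CH_{m,n}]^{\star d}$. This reduces to a change-of-basis computation within the slope-$m/n$ block of the shuffle algebra of~\eqref{eqn:k intro}, comparing the PBW monomial $[\CH_{m,n}]^{\star d}$ (indexed by the partition $(1^d)$) with the generator $[\oCH_{md,nd}]$ (indexed by the partition $(d)$): the content is that antisymmetrizing $[\CH_{m,n}]^{\star d}$ returns $[\oCH_{md,nd}]$ up to an explicit power of $q_1$ --- that power being the degree of the one-dimensional sign-isotypic part of the $S_d$-coinvariant algebra of graded dimension $(1+q_1)(1+q_1+q_1^2)\cdots(1+q_1+\dots+q_1^{d-1})$ --- which is essentially the formula \cite{Integral}. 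This step is bookkeeping in the elliptic Hall/shuffle algebra of \cite{SV2, Integral}.

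\textbf{Step (c)}: prove the generation statement for $D^b(\Coh_{\torus}(\comm_n))$. We would induct on $n$ using the slope (Harder--Narasimhan) stratification of $\comm_n$: ordering sub-commuting-pairs by slope gives a filtration of $D^b(\Coh_{\torus}(\comm_n))$ whose associated graded pieces are Hall convolutions of the categories $D^b(\Coh_{\torus}(\comm_{n'}))$, $n'<n$, against the semistable locus of a fixed slope, which itself decomposes into convolution powers of the coprime objects $\CH_{m,n}$. Two inputs are needed: compatibility of the Hall convolution functor with these filtrations (projection formula and base change for the Hall correspondence), so that a complex assembled from the strata reassembles into an iterated convolution of the generators~\eqref{eqn:intro gens geom}; and the rank-one base case, namely that $D^b(\Coh_{\torus})$ of the smooth stack underlying the construction of $\CH_{m,n}$ is generated, in the strong (iterated-cones) sense of the statement, by the equivariant shifts of a single object --- a resolution-of-the-diagonal statement on that stack. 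The $K$-theoretic shadow of this argument is precisely~\eqref{eqn:k intro}, so it is a categorification of the filtration by which~\eqref{eqn:k intro} is established in \cite{Integral}.

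The main obstacle is step (a): producing the $S_d$-action coherently --- checking \emph{all} of the braid relations, not merely $\sigma^2=\mathrm{id}$ --- at the level of honest complexes, since $\star$ is a derived functor and the swap isomorphism is defined only up to the higher coherences of the Hall monoidal structure. The most robust route is probably to bypass the full $S_d$-action and build the antisymmetrizing idempotent directly inside $\End(\CH_{m,n}^{\star d})$ by a Jucys--Murphy/Demazure recursion, mirroring the construction of the colored projectors on the Hecke side \cite{GNR}; this also works over $\Z$, avoiding division by $d!$. Once (a) is settled, the remaining substantial ingredient is the rank-one base case in (c).
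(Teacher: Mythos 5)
The statement you are addressing is Conjecture \ref{conj:objects intro}: the paper does not prove it. It explicitly defers the construction of the $S_d$-action (and the generation statement) to the forthcoming work \cite{CPT}, offering as evidence only (i) the restriction to the open locus where $X$ has distinct eigenvalues (Subsection \ref{sub: slope zero}, where the action is the manifest $S_n$-symmetry of a Galois cover and one obtains \eqref{eqn:below}), (ii) the $K$-theoretic identity \eqref{eqn:believe} supported by Remark \ref{rem: n! easy} and \cite{Integral}, and (iii) the parallel topological Theorem \ref{thm:intro}. So there is no proof in the paper to compare yours against, and your proposal must be judged as an attempt at an open problem.

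Judged that way, your three-step outline is a sensible roadmap and is consistent with what the paper suggests the eventual argument will look like: your step (b) correctly isolates the factor $(1+q_1)(1+q_1+q_1^2)\cdots(1+q_1+\dots+q_1^{d-1})$ appearing in Remark \ref{rem: n! easy}, and your step (c) is a categorification of \cite[Lemma 1.6]{Integral}, i.e. of \eqref{eqn:k intro}. But the proposal leaves genuine gaps exactly where the conjecture is hard. In step (a), the claimed ``tautological $\Z/2$-symmetry of the Hall correspondence'' does not exist naively: the correspondence $\comm_{n,n}$ parameterizes \emph{filtered} pairs and is not symmetric under exchanging sub and quotient, so the swap isomorphism on $\CH_{m,n}\star\CH_{m,n}$ must come from a nontrivial exchange/commutator mechanism, and establishing that it squares to the identity and satisfies the braid relations coherently at the level of complexes (not merely in $K$-theory) is precisely the crux; your fallback via Jucys--Murphy idempotents still presupposes a computation of the degree-zero part of $\End(\CH_{m,n}^{\star d})$ that you do not supply. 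In step (c), both inputs you flag --- the categorical Harder--Narasimhan semiorthogonal decomposition of $D^b(\Coh_{\torus}(\comm_n))$ and the slope-wise generation base case --- are unproven and of comparable depth to the conjecture itself. In short, the proposal is a plausible plan, not a proof, and it should be presented as such.
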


While the present paper was being written, Tudor Pădurariu informed us about his ongoing work with Sabin Cautis and Yukinobu Toda (\cite{CPT}), which independently discusses a semiorthogonal decomposition of the category $D^b(\text{Coh}_{\torus}(\comm_n))$ (their construction is more refined than ours, as it contains information on morphisms, and not just objects).

\subsection{Topology}
\label{sub:intro top}

Our own interest in Conjecture \ref{conj:objects intro} was motivated by a parallel statement in topology, which is the main result of the present paper. Recall the extended affine braid group of type $A_{n-1}$
$$
\text{ABr}_n = \Big \langle \omega, \sigma_i \Big \rangle_{i \in \Z/n\Z} \Big/ \Big(\omega \sigma_i \omega^{-1} = \sigma_{i+1}, \sigma_i\sigma_{i+1}\sigma_i = \sigma_{i+1}\sigma_i \sigma_{i+1}, \sigma_i\sigma_j = \sigma_j \sigma_i \Big)_{\forall i,j \in \Z/n\Z, j \neq i\pm 1}
$$
The extended affine Hecke algebra of type $A_{n-1}$ is
$$
\text{AH}_n = \C(q)\left[ \text{ABr}_n \right] \Big/ \Big( (\sigma_i - q)(\sigma_i+q^{-1}) = 0 \Big)_{\forall i \in \Z/n\Z}
$$
In Subsection \ref{sec: affine hecke}, we will recall the monoidal category of affine Soergel bimodules $\ASBim_n$ and its homotopy category $\CK(\ASBim_n)$, which categorifies the algebra $\text{AH}_n$. The main topological object in the present paper is the horizontal trace $\Tr(\ASBim_n)$ (see Subsection \ref{sec: traces} for details), which categorifies $\text{AH}_n / [\text{AH}_n,\text{AH}_n]$. Similarly, there is a natural algebra homomorphism $\text{AH}_n \otimes \text{AH}_{n'} \rightarrow \text{AH}_{n+n'}$ for all $n,n'$, whose categorification is a functor
$$
\CK(\ASBim_n) \otimes \CK(\ASBim_{n'}) \xrightarrow{\star} \CK(\ASBim_{n+n'})
$$
It is widely expected that the functor above descends to the trace categories, and experts are very close to doing so (see for instance \cite{LMRSW}), which would yield a functor
\begin{equation}
\label{eqn:conv intro}
\Tr(\ASBim_n) \otimes \Tr(\ASBim_{n'}) \ ``\xrightarrow{\star}" \ \Tr(\ASBim_{n+n'})
\end{equation}
In the present paper, we only need to invoke the functor \eqref{eqn:conv intro} at the level of objects, in which case its definition is clear. In \cite{GN Tr}, we constructed for all $(m,n) \in \Z \times \N$ an object \footnote{In the notation of \emph{loc. cit.}, we have $H_{m,n} = E_{(d_1(m,n),\dots,d_n(m,n))}$ with $d_i(m,n) = \left \lfloor \frac {mi}n \right \rfloor - \left \lfloor \frac {m(i-1)}n \right \rfloor$.}
\begin{equation}
\label{eqn:old objects intro}
H_{m,n} \in \Tr(\ASBim_n)
\end{equation}

\begin{theorem}
\label{thm:intro}

For any coprime $(m,n) \in \Z \times \N$ and $d \in \N$, there is an action of $S_d$ on
\begin{equation}
\label{eqn:conv intro top}
\underbrace{H_{m,n} \star \dots \star H_{m,n}}_{d \text{ times}} \in \Tr(\ASBim_{nd})
\end{equation}

\end{theorem}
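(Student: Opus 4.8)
The plan is to produce the $S_d$-action by letting the symmetric group permute the $d$ identical ``blocks of $n$ strands'' that make up $H_{m,n}^{\star d}$. Fix a representative $\mathbf{F}=\mathbf{F}_{m,n}\in\CK(\ASBim_n)$ of $H_{m,n}$ — concretely the explicit $(m,n)$-torus Rouquier complex $E_{(d_1(m,n),\dots,d_n(m,n))}$ of \cite{GN Tr} — so that, by the definition of $\star$ on objects (cf.\ \eqref{eqn:conv intro}), $H_{m,n}^{\star d}$ is the image in $\Tr(\ASBim_{nd})$ of the side-by-side complex $\mathbf{F}^{\boxtimes d}\in\CK(\ASBim_{nd})$. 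Permuting the $d$ factors is not an automorphism of $\mathbf{F}^{\boxtimes d}$ inside $\CK(\ASBim_{nd})$, but it becomes one after passing to the horizontal trace, where any object $X$ and a conjugate $\beta X\beta^{-1}$ by an invertible object $\beta$ are canonically isomorphic; this is the standard mechanism by which ``conjugation'' maps descend to honest endomorphisms in a trace.

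In detail, for $1\le i\le d-1$ let $\tau_i\in\ABr_{nd}$ be the positive braid that moves the $i$-th block of $n$ strands past the $(i+1)$-st — i.e.\ the image of the standard generator of $\Br_d$ under the $n$-strand cabling homomorphism $\Br_d\hookrightarrow\Br_{nd}\subset\ABr_{nd}$. Since the $i$-th and $(i+1)$-st tensor factors of $\mathbf{F}^{\boxtimes d}$ are copies of the same complex, conjugation by the Rouquier complex of $\tau_i$ gives an isomorphism $\tau_i\,\mathbf{F}^{\boxtimes d}\,\tau_i^{-1}\xrightarrow{\sim}\mathbf{F}^{\boxtimes d}$ in $\CK(\ASBim_{nd})$; post-composing with the canonical trace identification yields an automorphism $\phi_i$ of $H_{m,n}^{\star d}$ in $\Tr(\ASBim_{nd})$. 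The first thing I would verify is that the $\phi_i$ satisfy the braid relations of $\Br_d$: this follows from coherence of the (affine) Rouquier braid-group action on $\CK(\ASBim_{nd})$ together with the braid relations among $\tau_1,\dots,\tau_{d-1}$, so that at this stage one already has an action of $\Br_d$ on $H_{m,n}^{\star d}$.

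The main obstacle is to show that this action descends to $S_d$, i.e.\ that $\phi_i^2=\Id$. Here $\phi_i^2$ amounts to conjugation by $\tau_i^2$, the full twist of the $i$-th block of $n$ strands around the $(i+1)$-st; since the full twist is central in $\ABr_{2n}$, one has $\tau_i^2\,\mathbf{F}^{\boxtimes d}\,\tau_i^{-2}\cong\mathbf{F}^{\boxtimes d}$ \emph{canonically} in $\CK$, so $\phi_i^2$ is a single automorphism of $H_{m,n}^{\star d}$ in the trace, and the theorem asserts it is the identity. This is where coprimality of $(m,n)$ must enter: morally, two parallel pushoffs of the slope-$m/n$ curve link trivially with respect to the surface framing precisely because they have the \emph{same} slope, so that the relative full twist acts trivially on $\mathbf{F}\boxtimes\mathbf{F}$. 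To make this precise I would study $H_{m,n}\star H_{m,n}$ directly for coprime $(m,n)$ — identifying its indecomposable summands and their endomorphism rings, or exhibiting an explicit homotopy trivializing the relative full twist on $\mathbf{F}\boxtimes\mathbf{F}$ — using the explicit torus Rouquier complexes of \cite{GN Tr}; I expect this to be the crux, and to fail when $\gcd(m,n)>1$, consistently with the hypothesis and with the fact that the ``correct'' summand $\oCH_{md,nd}$ only emerges after antisymmetrizing once $S_d$ acts. Granting $\phi_i^2=\Id$, the assignment sending the $i$-th generator to $\phi_i$ descends to the desired $S_d$-action; the analogous statement on the geometric side (Conjecture \ref{conj:objects intro}) should run along the same lines, with the relative full twist replaced by the $S_d$-symmetry of the convolution diagram defining the iterated $K$-theoretic Hall product.
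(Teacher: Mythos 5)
Your mechanism for producing the candidate automorphisms is essentially the right one --- the paper's generators of the $S_d$-action are also ``cabled crossings wrapped around the annulus'', namely the canonical classes $[\widehat{\sigma_i}]\in\Hom_{\ASBim_{nd}}(\Omega^{md}_{nd}T_{\widehat{\sigma_i}},T_{\widehat{\sigma_i}}\Omega^{md}_{nd})$ with $\widehat{\sigma_i}=\sigma_i\sigma_{i+d}\cdots\sigma_{i+nd-d}$ --- but your argument has a gap exactly at the point you yourself call the crux: you never establish $\phi_i^2=\Id$ (nor, in fact, the braid relations in the trace, which do not follow formally from the braid relations among the $\tau_i$ in $\CK(\ASBim_{nd})$: each $\phi_i$ is a trace morphism built from a conjugation \emph{together with} a choice of identification, and one must control the ambiguity in that choice inside $\Hom_{\Tr}$). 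Your proposed route --- an explicit homotopy trivializing the relative full twist on $\mathbf{F}\boxtimes\mathbf{F}$ --- is left as a declaration of intent and is not how the paper proceeds.

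The paper instead first identifies $H_{m,n}^{\star d}\simeq\Tr(\Omega^{md}_{nd})$ (equation \eqref{eq: omega from Pmn}, via \cite[Theorem 4.8]{GN Tr}) and then computes the \emph{entire} endomorphism algebra: Theorem \ref{thm: endo omega m} shows $\End_{\Tr(\ASBim_{nd})}(\Tr(\Omega^{md}_{nd}))\simeq\HH_*(R_d)\rtimes\widetilde{S_d}$. The computation runs the bar complex over the semiorthogonal decomposition by standard objects $T_w$ (Theorem \ref{thm:LW} combined with \cite[Theorem 5.11]{GHW}) and uses a Bruhat-order/length argument to show that only $w$ in the centralizer $C(\omega^{md}_{nd})$ contribute, with a single degree-zero class $[w]$ for each such $w$; coprimality of $(m,n)$ enters through Lemma \ref{lem: centralizer pi m}, which identifies this centralizer with $\widetilde{S_d}$. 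The relations $[w][w']=[ww']$ --- in particular $\phi_i^2=\Id$ --- are then verified by exhibiting a faithful action of these classes on $\End_{\ASBim_{nd}}(\Omega^{md}_{nd})\simeq\widetilde{R}_d$. Because the degree-zero endomorphism ring is shown to be \emph{exactly} $R_d\rtimes\widetilde{S_d}$, there is no room for $\phi_i^2$ to be anything but an element of this ring, and the faithful representation pins it down to the identity; your approach has no analogous control over the endomorphism space, so even granting your construction of the $\phi_i$, the central claim of the theorem remains unproved. Without some substitute for this endomorphism computation (or the explicit homotopy you allude to, which would be a substantial computation in its own right), the proposal does not constitute a proof.
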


The $S_d$-antisymmetric part of the object \eqref{eqn:conv intro top} will be denoted by
\begin{equation}
\label{eqn:idempotent intro}
\oH_{md,nd} \in \Tr(\ASBim_{nd})^{\text{Kar}}
\end{equation}
and we note that it lies in the idempotent completion of the category $\Tr(\ASBim_{nd})$. In addition to constructing the $S_d$ action in Theorem \ref{thm: endo omega m}, we also describe the full (dg) endomorphism ring of the object \eqref{eqn:conv intro top}, generalizing the results of \cite{GHW,GW}.

\begin{theorem}
\label{thm: intro H via E bar}
For any coprime $(m,n) \in \Z \times \N$ and $d \geq 1$, the objects $H_{md,nd}$ of \eqref{eqn:old objects intro} can be resolved by (meaning that they are homotopy equivalent to complexes built out of) 
$$
\bar{E}_{md_1,nd_1}\star \cdots \star \bar{E}_{md_k,nd_k}
$$
with $d_1+\ldots+d_k=d$. More generally, any product of $H_{md,nd}$ with the same ``slope" 
$$
H_{md_1,nd_1}\star\cdots\star H_{md_k,nd_k}
$$
can be resolved by $\bar{E}_{md'_1,nd'_1}\star \cdots \star \bar{E}_{md'_s,nd'_s}$.
\end{theorem}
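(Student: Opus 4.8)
The plan is to realize these resolutions as categorical lifts of the classical identities expressing ``complete'' or power-sum symmetric functions through elementary ones, within the copy of $\Lambda$ attached to a fixed slope $m/n$; the two essential inputs are the $S_d$-action of Theorem~\ref{thm:intro} and the computation of the (dg) endomorphism rings of the objects \eqref{eqn:conv intro top} in Theorem~\ref{thm: endo omega m}. All of the argument takes place in $\Tr(\ASBim_{\bullet})^{\Kar}$.

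First I would carry out a bookkeeping reduction. The defining composition of $H_{md,nd}$ (see the footnote to \eqref{eqn:old objects intro}) is $d$ consecutive copies of that of $H_{m,n}$, so that $H_{md,nd}\simeq H_{m,n}^{\star d}$, and hence any same-slope product $H_{md_1,nd_1}\star\cdots\star H_{md_k,nd_k}$ is isomorphic to $H_{m,n}^{\star d}$ with $d=d_1+\cdots+d_k$. Both assertions of the theorem therefore reduce to the single statement that $H_{m,n}^{\star d}$ is homotopy equivalent to a finite complex whose terms are $\star$-products $\bar{E}_{me_1,ne_1}\star\cdots\star\bar{E}_{me_\ell,ne_\ell}$ with $e_1+\cdots+e_\ell=d$. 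Since, by its construction in Theorem~\ref{thm:intro}, $\bar{E}_{me,ne}$ is the image of the sign idempotent of $\k[S_e]$ on $H_{m,n}^{\star e}$ --- that is, the $e$-th exterior power of $H_{m,n}$ --- the partition $(1^d)$ already yields the trivial resolution $H_{m,n}^{\star d}=\bar{E}_{m,n}^{\star d}$; the real content is the existence of resolutions built out of the larger pieces $\bar{E}_{me_i,ne_i}$ with $e_i\ge 2$, which are the objects matching the standard $K$-theory bases (cf.\ \eqref{eqn:k intro}).

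The core step would be a categorified dual Jacobi--Trudi resolution. By Theorem~\ref{thm:intro} the tower of group algebras $\k[S_\bullet]$ acts on the powers $H_{m,n}^{\star\bullet}$; splitting the group algebras upgrades $H_{m,n}^{\star d}$ to a direct sum over partitions $\lambda\vdash d$ of Schur-functor objects $\mathbb{S}_\lambda(H_{m,n})$, with multiplicity $\dim V_\lambda$ (here $V_\lambda$ is the irreducible $S_d$-module), the $\lambda=(1^d)$ summand being $\bar{E}_{md,nd}$. The exterior-power objects $\bar{E}_{me,ne}=\Lambda^e(H_{m,n})$ carry canonical ``(co)multiplication'' morphisms $\Lambda^{a+b}(H_{m,n})\leftrightarrow\Lambda^a(H_{m,n})\star\Lambda^b(H_{m,n})$, which come purely from $\k[S_\bullet]$-module theory; out of them one assembles, for each $\lambda$, the Akin--Buchsbaum--Weyman/Koszul complex: a finite acyclic complex of $\bar{E}$-products with total object $\mathbb{S}_\lambda(H_{m,n})$. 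Concatenating these complexes over $\lambda$ produces the desired resolution of $H_{m,n}^{\star d}$, proving the theorem; the case $n=1$, $\lambda=(1^d)$ recovers the resolutions of \cite{GHW,GW}.

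I expect the main obstacle to be making the ``$S_\bullet$ acts monoidally'' formalism rigorous and keeping track of the internal grading. Two points need care. First, one must verify that the braidings underlying the $S_d$-action of Theorem~\ref{thm:intro} satisfy the hexagon/coherence identities, so that the exterior powers $\Lambda^e(H_{m,n})$ and the structure maps among their $\star$-products genuinely exist and the Koszul differential squares to zero; I would check this on generating morphisms using the explicit (dg) $\Hom$-spaces of $H_{m,n}^{\star d}$ furnished by Theorem~\ref{thm: endo omega m}, transporting the combinatorics from $\Rep(S_\bullet)$. Second, the sign idempotent that defines $\bar{E}_{me,ne}$ is accompanied by a $q$-shift, so the relevant exterior power is a $q$-deformed one and the Koszul differentials carry $q$-weights, which must be reconciled with the gradings of \cite{GNR}. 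Once the monoidal $S_\bullet$-structure is established the resolution is essentially formal --- it is the image of a standard resolution from the representation theory of symmetric groups --- and no further Hecke-category-specific computation is needed.
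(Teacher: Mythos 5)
There is a genuine gap in your opening reduction: the isomorphism $H_{md,nd}\simeq H_{m,n}^{\star d}$ is false, and the whole argument hinges on it. The correct statements are \eqref{eq: omega from Pmn} and \eqref{eq: pmn from omega}: one has $H_{m,n}^{\star d}\simeq \Tr(\Omega^{md}_{nd})$ (topologically, $d$ \emph{parallel} copies of the $(m,n)$ torus knot), whereas $H_{md,nd}\simeq \Tr(\Omega^{md}_{nd}T_1\cdots T_{d-1})$ (the $d$ copies joined into a single knot by $d-1$ extra crossings). The $\star$-product of braids does not insert the crossings $T_n, T_{2n},\dots$ that appear in the defining word of $H_{md,nd}$. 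These two objects are not even equal in the Grothendieck group: under $\phi^{\frac mn}$ their classes correspond (up to normalization) to $\bar p_1^{\,d}$ and to the modified complete symmetric function $h_d$ respectively; already for $m=0$, $n=1$, $d=2$ you are conflating $\Tr(\one_2)$ with $\Tr(T_1)$. As you yourself observe, resolving $H_{m,n}^{\star d}$ is trivial ($=\bar E_{m,n}^{\star d}$), so after this reduction nothing is left to prove --- which is a sign the reduction destroyed the content of the theorem. The actual content is that the \emph{connected} cable $H_{md,nd}$, which is not a summand of $\Tr(\Omega^{md}_{nd})$ but rather the symmetric part of a Koszul complex built on it, admits such a resolution.

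The second half of your plan (split $\Tr(\Omega^{md}_{nd})$ into Schur objects via the $S_d$-action, then resolve Schur objects by products of exterior powers via Jacobi--Trudi/Koszul complexes) is essentially the paper's final step, and your treatment of products for $k>1$ via compatibility with $S_{d_1}\times\cdots\times S_{d_k}\subset S_d$ matches Corollary \ref{cor: schurs generate in slope 2}. What is missing is the bridge from $H_{md,nd}$ to these Schur objects. The paper supplies it with the cabling functor $\Phi^{\frac mn}:\Tr(\SBim_d)\to\Tr(\ASBim_{nd})$, $\Tr(X)\mapsto\Tr(\Omega^{md}_{nd}X)$, which intertwines the $S_d$-actions (Lemma \ref{lem: slope schurs}) and transports the slope-zero computations of \cite{GW} --- namely that every object of $\Tr(\SBim_d)^{\Kar}$ is resolved by the Schur objects $\Tr(\one_d)^{\lambda}$, and that $\Tr(T_1\cdots T_{d-1})$ is explicitly the symmetric part of the Koszul complex $K_d$ --- to slope $\frac mn$ (Theorem \ref{thm: ribbon Schurs slope}). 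That identification is a genuine theorem of \cite{GW}, not a formal consequence of the existence of the $S_d$-action, so it cannot be bypassed by coherence-checking on $\End(\Tr(\Omega^{md}_{nd}))$ alone. You should replace your first paragraph by this cabling argument; the rest of your outline then goes through.
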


We prove Theorem \ref{thm: intro H via E bar} as Corollary \ref{cor: schurs generate in slope 2}. The proof uses the new ``cabling functor'' defined in Section \ref{sec: cabling} and the computations at slope zero from \cite{GW}. This allows us to describe the slope $\frac mn$ subcategories of $\Tr(\ASBim_n)^{\Kar}$ generalizing the slope $\frac mn$ subalgebra in the elliptic Hall algebra (namely $\Lambda^{\frac mn}$ of \eqref{eqn:slope subalgebras}). Furthermore, in Section \ref{sec: ribbon Schur top} we describe an explicit resolution of $H_{md,nd}$ by the products of $\bar{E}_{md_i,nd_i}$, see Theorem \ref{thm: ribbon Schurs slope} and Remark \ref{rem: H schurs slope}.

In \cite{GN Tr}, we proved that the objects 
\begin{equation}
\label{eqn:generate trace intro}
\Big\{ H_{m_1,n_1} \star \dots \star H_{m_k,n_k} \Big\}^{\frac {m_1}{n_1} \leq \dots \leq \frac {m_k}{n_k}}_{\substack{(m_i,n_i) \in \Z \times \N, \\ n_1+\dots+n_k = n}}
\end{equation}
generate $\Tr(\ASBim_n)$ as a triangulated category. 
We would like to state that $\star$ products of the elements \eqref{eqn:idempotent intro} generate the idempotent completions of these categories, but this requires some more refined information about morphisms in $\Tr(\ASBim_n)$ than we currently possess. We summarize our expectations in the following. 

\begin{conjecture}
\label{conj:intro top}
The category $\Tr(\ASBim_n)^{\emph{Kar}}$ is generated by the objects
\begin{equation}
\label{eqn:intro gens top}
\Big\{ \oH_{m_1,n_1} \star \dots \star \oH_{m_k,n_k} \Big\}^{\frac {m_1}{n_1} \leq \dots \leq \frac {m_k}{n_k}}_{\substack{(m_i,n_i) \in \Z \times \N, \\ n_1+\dots+n_k = n}} 
\end{equation}
in the sense that any object in the category is homotopy equivalent to a chain complex built out of the objects \eqref{eqn:intro gens top} and their grading shifts.

\end{conjecture}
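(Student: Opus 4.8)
The plan is to deduce Conjecture \ref{conj:intro top} from the generation statement of \cite{GN Tr} together with Theorem \ref{thm: intro H via E bar}. Recall that in \cite{GN Tr} the objects \eqref{eqn:generate trace intro} were shown to generate $\Tr(\ASBim_n)$ as a triangulated category, so that any object of $\Tr(\ASBim_n)$ is homotopy equivalent to a chain complex built from the $\star$-products $H_{m_1,n_1}\star\cdots\star H_{m_k,n_k}$ with $\frac{m_1}{n_1}\le\cdots\le\frac{m_k}{n_k}$ and their grading shifts. Since every object of the idempotent completion $\Tr(\ASBim_n)^{\Kar}$ is a direct summand of an object of $\Tr(\ASBim_n)$, and since the thick subcategory generated by \eqref{eqn:intro gens top} --- the smallest triangulated subcategory closed under direct summands and grading shifts --- contains every summand of an iterated cone of its objects, it suffices to show that each generator $H_{m_1,n_1}\star\cdots\star H_{m_k,n_k}$ lies in the thick subcategory generated by \eqref{eqn:intro gens top}.

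To this end, I would group the factors of such a generator into maximal blocks of equal slope, writing it as $B_1\star\cdots\star B_r$ where each $B_j$ is a $\star$-product of objects $H_{m',n'}$ of a single slope $s_j$, and $s_1<\cdots<s_r$. Then $B_j$ is a product $H_{md_1,nd_1}\star\cdots\star H_{md_\ell,nd_\ell}$ for the coprime representative $(m,n)$ of $s_j$, and Theorem \ref{thm: intro H via E bar} (equivalently Corollary \ref{cor: schurs generate in slope 2}, with the explicit form given in Theorem \ref{thm: ribbon Schurs slope}) tells us that $B_j$ is homotopy equivalent to a chain complex built from $\star$-products $\bar E_{md'_1,nd'_1}\star\cdots\star\bar E_{md'_s,nd'_s}$, i.e.\ from $\star$-products of the objects $\oH_{\bullet,\bullet}$ of the single slope $s_j$ (recall that $\oH=\bar E$). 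It then remains to $\star$-multiply these slope-local resolutions together in the order $B_1,\dots,B_r$: since within block $j$ all slopes equal $s_j$ and $s_1<\cdots<s_r$, the result is a chain complex built from $\star$-products $\oH_{m_1',n_1'}\star\cdots\star\oH_{m_t',n_t'}$ whose slopes are weakly increasing, that is, from the objects \eqref{eqn:intro gens top}.

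I expect the main obstacle to be precisely this final gluing step. In order to $\star$-multiply chain complexes of trace objects and conclude that the product resolves $B_1\star\cdots\star B_r$ --- equivalently, to know that $\star$ sends iterated cones to iterated cones --- one needs the functor \eqref{eqn:conv intro} to be defined not merely on objects but as an honest exact functor between the trace categories; alternatively, one needs the resolutions of Theorem \ref{thm: intro H via E bar} to lift through the trace functors $\CK(\ASBim_\bullet)\to\Tr(\ASBim_\bullet)$ to resolutions in the homotopy categories $\CK(\ASBim_\bullet)$, where $\star$ is manifestly exact, so that the gluing can be carried out upstairs and then pushed down --- and the explicit form of these resolutions in Theorem \ref{thm: ribbon Schurs slope} suggests this may well be feasible. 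Either way, one needs the refined control of morphism complexes in $\Tr(\ASBim_n)$ that is flagged here as not yet available but close at hand (cf.\ \cite{LMRSW}); once \eqref{eqn:conv intro} is upgraded to a functor, the argument above would upgrade Conjecture \ref{conj:intro top} to a theorem.
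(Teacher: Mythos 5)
This statement is left as a \emph{conjecture} in the paper, and your argument has a genuine gap which coincides with the obstruction the authors themselves identify; moreover, your proposed reduction quietly replaces the statement with a weaker one. The conjecture's notion of generation is that every object of $\Tr(\ASBim_n)^{\Kar}$ is homotopy equivalent to a chain complex built out of the objects \eqref{eqn:intro gens top} and their shifts --- passing to direct summands is \emph{not} allowed in the building process. Your reduction (``it suffices to show that each $H_{m_1,n_1}\star\cdots\star H_{m_k,n_k}$ lies in the thick subcategory generated by \eqref{eqn:intro gens top}'') only yields that every object of $\Tr(\ASBim_n)^{\Kar}$ is a \emph{direct summand} of such a chain complex, i.e.\ the weaker ``thick generation'' statement. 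To upgrade this to the conjecture one must know that a direct summand of a twisted complex built from the generators is again such a twisted complex; by the Karoubianness argument the paper uses for the slope subcategories (via \cite[Theorem A.10]{GW}), this requires the additive category generated by the objects \eqref{eqn:intro gens top} under sums, shifts and summands to already be Karoubian with no new objects --- for instance, it would suffice to know that the products $\oH_{m_1,n_1}\star\cdots\star\oH_{m_k,n_k}$ are indecomposable for $k>1$, or at least to classify their idempotent endomorphisms. This is precisely what the paper flags as unknown, both just before Conjecture \ref{conj:intro top} (``this requires some more refined information about morphisms in $\Tr(\ASBim_n)$ than we currently possess'') and at the end of Section \ref{sec: cabling}: Theorem \ref{thm: endo omega m} controls endomorphisms only in the single-slope case, and nothing is known about $\End$ of the multi-slope products.

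The remainder of your argument --- grouping factors into slope blocks, resolving each block by $\bar{E}$-products via Theorem \ref{thm: intro H via E bar} and Corollary \ref{cor: schurs generate in slope 2}, and concatenating in convex order --- is exactly how the paper obtains the unconditional statement that the objects \eqref{eqn:intro gens top} generate $\Tr(\ASBim_n)$ \emph{before} idempotent completion, and the caveat you raise about needing $\star$ on morphisms to glue resolutions across slopes is a legitimate secondary issue (deferred to \cite{LMRSW}). But you have identified the wrong step as ``the main obstacle'': the primary missing ingredient, and the reason the statement remains a conjecture, is the control of idempotents described above, which your proposal does not address.
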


Note that by Theorem \ref{thm: intro H via E bar} any object of  $\Tr(\ASBim_n)$ can be indeed resolved by the objects \eqref{eqn:intro gens top}.
Comparing Conjectures \ref{conj:objects intro} and \ref{conj:intro top}, it is natural to conjecture that the sought-for functor in Problem \ref{prob:main} sends
$$
\oH_{m_1,n_1} \star \dots \star \oH_{m_k,n_k} \mapsto \oCH_{m_1,n_1} \star \dots \star \oCH_{m_k,n_k}  
$$
for any $\frac {m_1}{n_1} \leq \dots \leq \frac {m_k}{n_k}$. By the generation statements in Conjectures \ref{conj:objects intro} and \ref{conj:intro top}, this would completely define the functor \eqref{eqn:main functor} on the level of objects.

\subsection{Connection to the Hilbert scheme of points}

Together with Jacob Rasmussen, the authors conjectured in \cite{GNR} the existence of a functor from the category of {\bf finite} Soergel bimodules $\SBim_n$ to the derived category of the (semi-nilpotent) Hilbert scheme of points on $\C^2$. This conjecture was proved (in different frameworks) in \cite{HL} and \cite{OR}. In terms of the present paper, we can rephrase and upgrade the aforementioned conjecture by asking to construct a functor
\begin{equation}
\label{eqn:gnr}
\Tr(\SBim_n)^{\Kar} \rightarrow D^b(\Coh_{\torus}(\Hilb^n(\C^2,\C))).
\end{equation}
In \cite{Elias} Elias conjectured the existence of a ``flattening functor" $\ASBim_n\to \SBim_n$ categorifying the natural homomorphism of Hecke algebras $\AH_n\to \text{H}_n$ (topologically, this functor is expected to  ``flatten" an affine braid diagram to a finite one, hence the name). Despite significant progress in \cite{Tolmachov}, the construction of such a functor in full generality remains elusive. 

Assuming that a suitable flattening functor exists, one could conjecture that it extends to trace categories, and fits into the commutative diagram
$$
\begin{tikzcd}
    \Tr(\ASBim_n)^{\Kar} \arrow{r}{\text{\eqref{eqn:main functor}}} \arrow{d} & D^b(\Coh_{\torus}(\comm_n)) \arrow{d}\\
    \Tr(\SBim_n)^{\Kar} \arrow{r}{\eqref{eqn:gnr}} &  D^b(\Coh_{\torus}(\Hilb^n(\C^2,\C)))
\end{tikzcd}
$$
where the right vertical arrow is pullback with respect to the natural map $\Hilb^n(\C^2,\C) \rightarrow \comm_n$ (which is an open embedding followed by an affine space fibration). 

In more topological terms, the top row of this diagram is expected to categorify the HOMFLY-PT skein of the torus, while  the bottom row categorifies the HOMFLY-PT skein of the solid torus, see \cite[Section 1]{GN Tr} for more context and references. For example, for coprime $(m,n)$ the object $H_{m,n}\in \Tr(\ASBim_n)^{\Kar}$ corresponds to the $(m,n)$ torus knot, and the corresponding sheaf on the Hilbert scheme was (conjecturally) described in \cite{GN ref}. 

\subsection{Relation to other works} 

Motivated by the geometric Langlands program, a number of authors pursued the study of {\bf affine character sheaves}, in particular Ben-Zvi-Nadler-Preygel \cite{BZNP} and Li-Nadler-Yun \cite{LNY}, see also \cite{HHZ}. The category of affine character sheaves is identified with the trace (in the sense of $\infty$-categories) of the affine Hecke category, and with a certain completion of the derived category of the commuting stack.

Instead of working with affine Soergel bimodules, these authors used geometric models for the affine Hecke category developed by Bezrukavnikov \cite{Bez}. One of the key advantages of using Soergel bimodules is that the $q$-grading is readily available, while it is much more delicate in other models. Nevertheless, we expect the two frameworks to be closely related, and it would be extremely interesting to make the connection precise.

As a concrete example, our trace category contains a special object $\Tr(\one)$ and its slope $\frac mn$ analogues $\Tr(\Omega^{md}_{nd})$. In Theorem \ref{thm: endo omega m} we prove that the endomorphism algebras as these objects are quite large and, in particular, contain the affine symmetric group $\widetilde{S}_d$. We expect $\Tr(\one)$ to correspond to the affine Springer sheaf studied in the geometric Langlands literature \cite{BZCHN,BZCHN2,BKV,Zhu}. 

\subsection{Acknowledgments} We would like to thank Tudor Pădurariu for many interesting discussions pertaining to the contents of the present paper, and for sharing with us the progress in the paper \cite{CPT}. We also thank Roman Bezrukavnikov, Pavel Etingof, Ivan Loseu, Xinchun Ma, Francesco Sala and Paul Wedrich for many useful discussions.

The work of E.G. was partially supported by the NSF grant DMS-2302305. The work of A.N. was partially supported by the NSF grant DMS-1845034.

\bigskip

\section{Geometry: derived categories of commuting stacks}

\subsection{The commuting stack} For any $n \in \N$, let us consider the (semi-nilpotent) commuting variety
\begin{equation}
\label{eqn:commuting variety}
\ocomm_n = \Big\{X,Y \in \text{Mat}_{n \times n}, Y \text{ nilpotent}, [X,Y] = 0 \Big\}
\end{equation}
It is endowed with an action of $GL_n$ by simultaneous conjugation of $X$ and $Y$, and with an action of $\torus$ by rescaling $X$ and $Y$ independently. Because these two actions commute, then the $\torus$ action descends to the so-called commuting stack
\begin{equation}
\label{eqn:nilp commuting stack}
\comm_n = \ocomm_n \Big/ GL_n
\end{equation}
The main geometric object considered in the present paper will be the equivariant derived category
\begin{equation}
\label{eqn:derived category}
D^b(\text{Coh}_{\torus}(\comm_n))
\end{equation}

\begin{remark}

As in \cite[Remark 2.1]{GN Tr}, we posit that the correct way to think of \eqref{eqn:commuting variety} is not as an algebraic variety, but as the derived subscheme of affine space cut out by the vanishing of the characteristic polynomial of $Y$ and the quadratic equation $[X,Y] = 0$. Similarly, \eqref{eqn:nilp commuting stack} should be interpreted as a derived stack, so one should work with category \eqref{eqn:derived category} in the context of derived algebraic geometry. We do not stress this aspect in the present paper, as it will not be consequential for us, but we believe that working in the appropriate setting is crucial to solving Problem \ref{prob:main}.

\end{remark}

\subsection{The flag commuting stack} We consider the (semi-nilpotent) flag commuting variety $\ofcomm_n$ parameterizing pairs of commuting $n \times n$ matrices of the form
\begin{equation}
\label{eqn:upper triangular}
X = \begin{pmatrix} x_1 & * & \dots & * \\ 0 & x_2 & \dots & * \\ \vdots & \vdots & \ddots & \vdots \\ 0 & 0 & \dots & x_n \end{pmatrix}, \quad Y = \begin{pmatrix} 0 & * & \dots & * \\ 0 & 0 & \dots & * \\ \vdots & \vdots & \ddots & \vdots \\ 0 & 0 & \dots & 0 \end{pmatrix}
\end{equation}
If $B_n \subset GL_n$ denotes the Borel subgroup of upper triangular matrices, then we may define the flag commuting stack
\begin{equation}
\label{eqn:flag commuting stack}
\fcomm_n = \ofcomm_n \Big / B_n
\end{equation}
which inherits the action of $\torus$ by independently scaling the matrices $X$ and $Y$. In the present paper, we will also encounter the stack
$$
\fcomm_n^\bullet
$$
constructed just as above, but with the additional condition that $x_1=x_2=\dots=x_n$ in \eqref{eqn:upper triangular} (however, the derived structures on the stacks $\fcomm_n^\bullet$ and $\fcomm_n$ are different: while the latter is defined by imposing one equation for each entry of $[X,Y]$ strictly above the diagonal, the former is defined by imposing one equation for each entry of $[X,Y]$ strictly above the superdiagonal, see \cite[Proposition 2.4]{GN Tr} for details). We have natural maps
$$
\pi : \fcomm_n \rightarrow \comm_n
$$
$$
\ \pi^\bullet : \fcomm_n^\bullet \rightarrow \comm_n
$$
induced by the inclusion of pairs of upper triangular matrices into the set of pairs of all matrices. Moreover, we have line bundles
\begin{equation}
\label{eqn:line bundle fcomm}
\begin{tikzcd}
\CL_1, \dots, \CL_n
\arrow[d, dotted] \\
\fcomm_n^\bullet
\end{tikzcd}
\end{equation}
induced by the fact that the stack $\fcomm_n^\bullet$ is a $B_n$ quotient (specifically, the line bundle $\CL_i$ stems from the character $B_n \rightarrow \mathbb{C}^*$ that picks out the $i$-th diagonal entry). If we combine the geometric constructions above, we may define the object
\begin{equation}
\label{eqn:object h}
\CH_{m,n} = \pi^\bullet_* \left( \bigotimes_{i=1}^n \CL_i^{\left \lfloor \frac {mi}n \right \rfloor - \left \lfloor \frac {m(i-1)}n \right \rfloor} \right) \in D^b(\Coh_{\torus}(\comm_n))
\end{equation}
for all $(m,n) \in \Z \times \N$.

\subsection{An eccentric commuting stack}
\label{sub:eccentric}

We will now define a second kind of flag commuting stack, which is inspired (and closely related to) the eccentric correspondences of \cite{Thesis}. We first consider the variety $\oecc_n^\bullet$ parameterizing pairs of commuting $n \times n$ matrices of the following form
\begin{equation}
\label{eqn:eccentric upper triangular}
X = \begin{pmatrix} * & * & * & \dots & * & * & * \\ * & * & * &  \dots & * & * & * \\ 0 & * & * & \dots & * & * & * \\ \vdots & \vdots & \vdots  & \ddots & \vdots & \vdots & \vdots \\ 0 & 0 & 0 & \dots & * & * & * \\ 0 & 0 & 0 & \dots & * & * & *  \\ 0 & 0 & 0 & \dots & 0 & * & * 
 \end{pmatrix}, \quad Y = \begin{pmatrix} 0 & * & * & \dots & * & * & * \\ 0 & 0 & * &  \dots & * & * & * \\ 0 & 0 & 0 & \dots & * & * & * \\ \vdots & \vdots & \vdots  & \ddots & \vdots & \vdots & \vdots \\ 0 & 0 & 0 & \dots & 0 & * & * \\ 0 & 0 & 0 & \dots & 0 & 0 & *  \\ 0 & 0 & 0 & \dots & 0 & 0 & 0 
 \end{pmatrix}
\end{equation}
In other words, the matrices $X$ and $Y$ behave differently with respect to the standard flag
$$
\C^0 \subset \C^1 \subset \dots \subset \C^{n-1} \subset \C^n
$$
in the sense that 
$$
X(\C^i) \subseteq \C^{i+1}, \quad Y(\C^{i+1}) \subseteq \C^i.
$$
The commutator $[X,Y] = 0$ imposes $\frac {n(n+1)}2-1$ equations on the coefficients of $X,Y$, one for every entry on or above the main diagonal (the $-1$ is due to the fact that the sum of the coefficients on the diagonal of $[X,Y]$ yields a superfluous relation due to $\text{Tr}[X,Y] = 0$). As the locus of matrices \eqref{eqn:eccentric upper triangular} is preserved by conjugation with the Borel subgroup $B_n \subset GL_n$ of upper triangular matrices, we may define the eccentric flag commuting stack
\begin{equation}
\label{eqn:eccentric commuting stack}
\ecc^\bullet_n = \oecc^\bullet_n \Big / B_n
\end{equation}
It has an action of $\torus$ by independently scaling the matrices $X$ and $Y$, the map
$$
{\pi'}^\bullet : \ecc_n^\bullet \rightarrow \comm_n
$$
induced by the inclusion of pairs of matrices \eqref{eqn:eccentric upper triangular} into the set of pairs of all matrices, as well as line bundles defined by analogy with \eqref{eqn:line bundle fcomm}
\begin{equation}
\label{eqn:line bundle ecc}
\begin{tikzcd}
\CL'_1, \dots, \CL'_n
\arrow[d, dotted] \\
\ecc_n^\bullet 
\end{tikzcd}
\end{equation}
We may then construct the following object for all $(m,n) \in \Z \times \N$, by analogy with \eqref{eqn:object h}
\begin{equation}
\label{eqn:object h prime}
\CH'_{m,n} = {\pi'_*}^\bullet \left( \bigotimes_{i=1}^n {\CL'_i}^{\left \lceil \frac {mi}n \right \rceil - \left \lceil \frac {m(i-1)}n \right \rceil} \right) \in D^b(\Coh_{\torus}(\comm_n))
\end{equation}

\begin{conjecture}
\label{conj h and h prime}
For any coprime $(m,n) \in \Z \times \N$, we have $\CH_{m,n} \cong \CH'_{m,n} \otimes q_2^{n-1}$.
\end{conjecture}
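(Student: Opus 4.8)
The plan is to compare the two pushforwards over a large open substack of $\comm_n$, and then to argue that each of $\CH_{m,n}$ and $\CH'_{m,n}$ is determined by its restriction there. Let $U\subseteq\comm_n$ be the open substack of pairs $(X,Y)$ with $Y$ regular nilpotent. Over $U$ the eccentric relations $[X,Y]=0$ can be solved explicitly: the diagonal entries of the commutator force, inductively, all subdiagonal entries of $X$ to vanish, and the first superdiagonal of the commutator then forces all diagonal entries of $X$ to coincide. Hence $\ecc_n^\bullet|_U$ is identified with $\fcomm_n^\bullet|_U$ (including derived structures, since on $U$ all defining equations are transverse), carrying $\CL'_i$ to $\CL_i$. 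Over $U$ the matrix $Y$ moreover identifies consecutive line bundles $\CL_i$ up to a twist by $q_2$, and a short computation gives
\[
\left(\bigotimes_{i=1}^n \CL_i^{d_i}\right)\Big|_U \cong \left(\bigotimes_{i=1}^n {\CL'_i}^{d'_i}\right)\Big|_U \otimes q_2^{e}, \qquad e = \sum_{i=1}^n (2i-n-1)\,d_i,
\]
where $d_i=\lfloor mi/n\rfloor-\lfloor m(i-1)/n\rfloor$ and $d'_i=d_{n+1-i}$. An Abel summation using the classical identity $\sum_{i=1}^{n-1}\lfloor mi/n\rfloor=\tfrac{(m-1)(n-1)}{2}$ for coprime $(m,n)$ gives $e=n-1$, matching the twist in the conjecture; hence $\CH_{m,n}|_U\cong\CH'_{m,n}|_U\otimes q_2^{n-1}$, which in particular re-derives the corresponding identity in $K$-theory.

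The remaining content is that $\CH_{m,n}$ and $\CH'_{m,n}$ are each the ``clean'' extension of their restriction to $U$ across the closed complement $Z=\comm_n\setminus U$. I would try to show: (i) the complexes $\pi^\bullet_*\bigl(\bigotimes\CL_i^{d_i}\bigr)$ and ${\pi'}^\bullet_*\bigl(\bigotimes{\CL'_i}^{d'_i}\bigr)$ are concentrated in a single cohomological degree, a purity statement for these specific line bundles, to be obtained either by bounding the fibrewise cohomology of $\pi^\bullet$ and ${\pi'}^\bullet$ or by producing explicit resolutions; and (ii) the resulting sheaves satisfy Serre's condition $S_2$ relative to the derived structure of $\comm_n$, so that $j_*(-|_U)$ recovers them for $j\colon U\hookrightarrow\comm_n$. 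Granting (i)--(ii) for both objects, one concludes via $\CH_{m,n}\cong j_*(\CH_{m,n}|_U)\cong j_*(\CH'_{m,n}|_U\otimes q_2^{n-1})\cong\CH'_{m,n}\otimes q_2^{n-1}$, the middle isomorphism being the one constructed above.

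Several reductions should help. The flag-reversal map $\phi\colon(X,Y)\mapsto(w_0X^{\mathrm T}w_0,\,w_0Y^{\mathrm T}w_0)$, with $w_0$ the longest permutation, preserves both $\fcomm_n^\bullet$ and $\ecc_n^\bullet$, sends $\CL_i\mapsto\CL_{n+1-i}$, commutes with $\torus$, and intertwines $\pi^\bullet,{\pi'}^\bullet$ with the transpose involution $\tau$ of $\comm_n$; since $\tau$ is trivial on $U$ (a commuting pair with regular $Y$ is conjugate to its transpose), the conjecture is equivalent to the analogous statement with the floor and ceiling weight vectors interchanged, so one may work with whichever side is more convenient. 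Moreover, $\fcomm_n^\bullet$ sits inside $\ecc_n^\bullet$ as the closed substack where the subdiagonal of $X$ vanishes and the diagonal entries of $X$ are equal, which rephrases the conjecture as an identity between two pushforwards out of the single stack $\ecc_n^\bullet$. As alternatives to the purity argument above, one could realize both sides as convolutions of one-box Hecke correspondences along the floor, resp.\ ceiling, lattice path from $(0,0)$ to $(n,m)$ --- two paths that are mutual reverses --- and prove a reversal symmetry for such convolutions; or, least self-containedly, identify both objects with the slope-$\tfrac{m}{n}$ generator of the semiorthogonal decomposition of \cite{CPT}.

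The main obstacle is steps (i)--(ii), and specifically carrying them out with the derived structure present, which---as the Remark following \eqref{eqn:derived category} emphasizes---is essential: $\comm_n$ must be treated as a derived stack, so ``sheaf'', ``$S_2$'', ``codimension of $Z$'', and ``recovered from an open substack'' all have to be formulated and verified in the derived setting, and the pushforwards a priori carry higher cohomology and extra components supported on $Z$ that must be ruled out, or shown to match on the two sides. I expect this to demand either a decomposition-theorem type input for $\pi^\bullet$ and ${\pi'}^\bullet$, or a careful comparison of the Koszul-type resolutions cutting out $\fcomm_n^\bullet$ and $\ecc_n^\bullet$---which impose $\binom{n-1}{2}$ versus $\tfrac{n(n+1)}{2}-1$ commutator equations, their discrepancy being the system of $2n-2$ equations in the coordinates of $X$ (which carry $q_1$-weight) that cuts out $\fcomm_n^\bullet$ inside $\ecc_n^\bullet$---together with an understanding of how that $q_1$-flavoured difference is absorbed, after pushing forward to $\comm_n$, into the purely $q_2$-flavoured twist $q_2^{n-1}$; controlling this interchange of $q_1$- and $q_2$-weights is the delicate point. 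I would first verify the conjecture by hand for $n\le 2$ and for $m\equiv\pm1\pmod n$, both to pin down signs and to see which of the reformulations above is most workable.
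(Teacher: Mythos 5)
This statement is a \emph{conjecture} in the paper: no proof is given there, and the only evidence offered is the $K$-theoretic identity $\HS_{m,n}=\HS'_{m,n}q_2^{n-1}$ imported from \cite{Integral}. Your text is likewise not a proof but a plan, and you say so; the question is therefore whether the plan is viable. Your computation over the locus $U$ where $Y$ is regular nilpotent is a sensible consistency check --- there the flag is forced, $\ecc_n^\bullet$ and $\fcomm_n^\bullet$ coincide, $Y$ trivializes the ratios $\CL_{i+1}/\CL_i$ up to $q_2$, and the exponent count via $\sum_{i=1}^{n-1}\lfloor mi/n\rfloor=\tfrac{(m-1)(n-1)}{2}$ does produce $q_2^{n-1}$ for coprime $(m,n)$. (One caveat: this does \emph{not} ``re-derive'' the global identity \eqref{eqn:h and h prime are equal}, only its restriction to $U$.)

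The genuine gap is that the extension step (i)--(ii) is not merely unverified but rests on a false premise about the size of $Z=\comm_n\setminus U$. The semi-nilpotent commuting variety $\ocomm_n$ is equidimensional of dimension $n^2$ (for $Y$ of each Jordan type, the orbit dimension and centralizer dimension sum to $n^2$), and the regular-nilpotent locus is dense in only \emph{one} irreducible component; in particular the component $\{(X,0)\}\cong\text{Mat}_{n\times n}$ with $X$ having distinct eigenvalues lies entirely inside $Z$. So $Z$ has codimension zero, and no purity/$S_2$/clean-extension argument of the form $\CH_{m,n}\cong j_*(\CH_{m,n}|_U)$ can possibly hold: the functor $j_*(-|_U)$ kills everything living on the other components. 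Worse, this is exactly where the content of the conjecture sits: the image of $\pi^\bullet$ is contained in the locus where $X$ has a single eigenvalue (because $x_1=\dots=x_n$ on $\fcomm_n^\bullet$), whereas every pair $(X,0)$ lies in the image of ${\pi'}^\bullet$ (every matrix is conjugate to a Hessenberg one), so the conjectured isomorphism forces large-scale cancellation in the derived pushforward ${\pi'}^\bullet_*$ over $Z$ --- a phenomenon your reduction to $U$ discards at the outset. Any workable approach must engage directly with the non-regular-$Y$ locus (e.g.\ via the Koszul-resolution comparison or the semiorthogonal decomposition of \cite{CPT} that you mention as alternatives), not treat it as a boundary to be extended across.
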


Our main evidence for Conjecture \ref{conj h and h prime} is the fact that $[\CH_{m,n}] = [\CH'_{m,n}] q_2^{n-1}$ holds in $K$-theory, see \eqref{eqn:h and h prime are equal}. More importantly, the object $\CH'_{m,n}$ was independently studied by other mathematicians (such as Bezrukavnikov-Loseu and Xinchun Ma, to whom we are grateful for discussing their work with us) who also developed versions of Conjecture \ref{conj h and h prime}, see for example \cite[Conjecture 5.3]{Ma}.

\subsection{Convolution} 
\label{sub:convolution}

For any $n,n' \in \N$, let us now recall the convolution operation
\begin{equation}
\label{eqn:convolution}
D^b(\Coh_{\torus}(\comm_{n})) \otimes D^b(\Coh_{\torus}(\comm_{n'})) \xrightarrow{\star} D^b(\Coh_{\torus}(\comm_{n+n'}))
\end{equation}
which is the straightforward generalization of the construction of \cite{SV2} from the level of $K$-theory to derived categories. We refer to \cite{PS} for the difficult technicalities that ensure the above operation is defined and well behaved, and simply sketch the general idea behind the construction here. Consider the parabolic commuting variety $\ocomm_{n,n'}$ parameterizing pairs of commuting $(n+n') \times (n+n')$ matrices $X, Y$ of the form
\begin{equation}
\label{eqn:parabolic matrix}
\left( \begin{array}{c|c}
n \times n \text{ matrix}& * \\
  \hline
0 & n' \times n' \text{ matrix}
\end{array} \right)
\end{equation}
with the diagonal blocks of the $Y$ matrix being nilpotent. Define the stack
\begin{equation}
\label{eqn:parabolic stack}
\comm_{n,n'} = \ocomm_{n,n'} \Big / P_{n,n'}
\end{equation}
where $P_{n,n'} \subset GL_{n+n'}$ is the parabolic subgroup of matrices of shape \eqref{eqn:parabolic matrix}. The maps
\begin{align*}
&p : \comm_{n,n'} \rightarrow \comm_n \\
&p' : \comm_{n,n'} \rightarrow \comm_{n'} \\
&p'' : \comm_{n,n'} \rightarrow \comm_{n+n'}
\end{align*}
remember the top left part, bottom right part and the whole of the matrices \eqref{eqn:parabolic matrix}, respectively. Then the operation \eqref{eqn:convolution} is given by
$$
\star = Rp''_{*} \circ  (Lp^* \boxtimes L{p'}^* )
$$
It is straightforward to check that this operation is associative in the natural sense.

\subsection{An open locus}
\label{sub: slope zero}

Having defined the objects $\CH_{m,n}$ and the convolution $\star$, Conjecture \ref{conj:objects intro} is well-posed, modulo the construction of the $S_d$ action on the object in the left-hand side of \eqref{eqn:conv intro geom}. The full construction will be given in \cite{CPT}, but for illustration we will provide a (rather easy) particular case of this construction when restricted to the open substack
\begin{equation}
\label{eqn:comm circ}
\mathring{\comm}_n \subset \comm_n
\end{equation}
corresponding to the matrix $X$ having distinct eigenvalues (recall that $Y$ is nilpotent, so all its eigenvalues are 0). We define \eqref{eqn:comm circ} as the classical truncation of the open substack in question, so we have an isomorphism
\begin{equation}
\label{eqn:open substack}
\mathring{\comm}_n \cong (\mathbb{C} \times \text{pt} / \mathbb{C}^*)^n/S_n  
\end{equation}
because if $X$ has distinct eigenvalues, then we can use the $GL_n$ action to make $X$ diagonal, which forces the nilpotent matrix $Y$ to be 0. On diagonal matrices, there is a residual trivial action of the maximal torus $T = (\C^*)^n$, and we must take the  quotient by the symmetric group in \eqref{eqn:open substack} because $S_n$ preserves diagonal matrices. Similarly, define
$$
\mathring{\fcomm}_n \subset \fcomm_n
$$
as the open substack corresponding to the matrix $X$ having distinct eigenvalues. We have
\begin{equation}
\label{eqn:iso ring}
\mathring{\fcomm}_n \cong (\mathbb{C} \times \text{pt} / \mathbb{C}^*)^n 
\end{equation}
and the restriction of $\fcomm_n \xrightarrow{\pi} \comm_n$ to the substack $\mathring{\fcomm}_n$ is the quotient map 
$$
\rho : (\mathbb{C} \times \text{pt} / \mathbb{C}^*)^n  \rightarrow (\mathbb{C} \times \text{pt} / \mathbb{C}^*)^n/S_n
$$
By definition, we have
$$
\underbrace{\CH_{0,1} \star \dots \star \CH_{0,1}}_{n \text{ times}} = \pi_*(\mathcal{O}_{\fcomm_n})
$$
and so 
\begin{equation}
\label{eqn:above}
\underbrace{\CH_{0,1} \star \dots \star \CH_{0,1}}_{n \text{ times}} \Big|_{\mathring{\comm_n}} \stackrel{\eqref{eqn:iso ring}}\cong \rho_* \left(\mathcal{O}_{(\mathbb{C} \times \text{pt} /\mathbb{C}^*)^n} \right)
\end{equation}
Explicitly, $\mathcal{O}_{(\mathbb{C} \times \text{pt} /\mathbb{C}^*)^n}$ is the graded ring $\C[x_1,\dots,x_n]$, and the map $\rho_*$ simply remembers it as a module over $\C[x_1,\dots,x_n]^{\text{sym}}$. In particular, the antisymmetric part of this ring is
$$
\C[x_1,\dots,x_n]^{\text{antisym}} = \prod_{1\leq i < j \leq n} (x_i-x_j) \cdot \C[x_1,\dots,x_n]^{\text{sym}}
$$
and thus \eqref{eqn:above} reads
\begin{equation}
\label{eqn:below}
\oCH_{0,n} \Big|_{\mathring{\comm_n}} \stackrel{\eqref{eqn:iso ring}}\cong q_1^{\frac {n(n-1)}2}\mathcal{O}_{\mathring{\comm}_n}
\end{equation}
with $q_1^{\frac {n(n-1)}2}$ being the $\torus$ weight of the Vandermonde determinant $\prod_{1\leq i < j \leq n} (x_i-x_j)$. 

\subsection{$K$-theory}

Consider the Grothendieck groups of the categories \eqref{eqn:derived category}, for all $n \in \mathbb{N}$
$$
K = \bigoplus_{n=0}^{\infty} K_{\torus}(\comm_n)
$$
The natural $K$-theoretic version of the convolution \eqref{eqn:convolution} makes $K$ into an algebra over $\Z[q_1^{\pm 1}, q_2^{\pm 1}]$, where $q_1,q_2$ are the elementary characters of the torus $\torus$. There is an explicit tool for studying $K$ called the shuffle algebra. To define it, consider the map
$$
\iota_n : \ocomm_n \rightarrow \mathbb{A}^{2n^2}
$$
where the right-hand side is the affine space of the coefficients of the matrices $X$ of $Y$ in \eqref{eqn:commuting variety}. The map $\iota_n$ induces a $\Z[q_1^{\pm 1}, q_2^{\pm 1}]$-module homomorphism
\begin{multline*}
K_{\torus}(\comm_n) = K_{\torus \times GL_n}(\ocomm_n) \xrightarrow{\iota_{n*}} \\ \xrightarrow{\iota_{n*}} K_{\torus \times GL_n}(\A^{2n^2}) = \Z[q_1^{\pm 1}, q_2^{\pm 1}][z_1^{\pm 1}, \dots, z_n^{\pm 1}]^{\text{sym}}
\end{multline*}
with the right-most equality due to the fact that $\A^{2n^2}$ is equivariantly contractible (above, $z_1,\dots,z_n$ denote the elementary characters of a maximal torus of $GL_n$, and ``sym" denotes symmetric polynomials). Putting the above maps as $n$ runs from 0 to $\infty$ together gives us a $\Z[q_1^{\pm 1}, q_2^{\pm 1}]$-module homomorphism
$$
K \xrightarrow{\iota_*} \CV = \bigoplus_{n=0}^{\infty} \Z[q_1^{\pm 1}, q_2^{\pm 1}][z_1^{\pm 1}, \dots, z_n^{\pm 1}]^{\text{sym}}
$$
It is well-known that $\iota_*$ is injective (see \cite[Lemma 2.5.1]{VV}), that its image 
$$
\CS = \iota_*(K)
$$
is determined by the explicit conditions in \cite[Definition 3.2]{Integral}, and that $\iota_*$  intertwines the convolution $\star$ on $K$ with the following shuffle product on $\CV$
\begin{multline}
\label{eqn:shuffle}
R(z_1,\dots,z_n) * R'(z_1,\dots,z_{n'}) = \\ = \text{Sym} \left[\frac {R(z_1,\dots,z_n)R'(z_{n+1},\dots,z_{n+n'})}{n!n'!} \prod_{1\leq i \leq n < j \leq n+n'} \zeta \left( \frac {z_i}{z_j} \right) \right]
\end{multline}
where
\begin{equation}
\label{eqn:zeta}
\zeta(x) = \frac {(1-xq_1)(1-xq_2)(1-x^{-1}q_1q_2)}{1-x}
\end{equation}

\subsection{Shuffle algebra formulas 1}
\label{sub:shuffle formulas 1}

From the definition of the object $\CH_{m,n}$ in \eqref{eqn:object h}, it is an exercise (proved along the lines of Proposition \ref{prop:computation} below) to show that its $K$-theory class satisfies
\begin{equation}
\label{eqn:know}
\iota_*[\CH_{m,n}] = \HS_{m,n}
\end{equation}
where
\begin{equation}
\label{eqn:h shuffle}
\HS_{m,n} = (1-q_1)^{n-1} (1-q_2)^n \cdot \text{Sym} \left[\frac {\prod_{i=1}^n z_i^{\left \lfloor \frac {mi}n \right \rfloor - \left \lfloor \frac {m(i-1)}n \right \rfloor}}{\prod_{i=1}^{n-1}\left(1-\frac {z_{i+1}q_1q_2}{z_i}\right)} \prod_{1\leq i < j \leq n} \zeta \left(\frac {z_i}{z_j}\right) \right]
\end{equation}
for all $(m,n) \in \Z \times \N$. Similarly, we have the following.

\begin{proposition}
\label{prop:computation}

The $K$-theory class of the object $\CH'_{m,n}$ in \eqref{eqn:object h prime} satisfies
\begin{equation}
\label{eqn:know prime}
\iota_*[\CH'_{m,n}] = \HS'_{m,n}
\end{equation}
where
\begin{equation}
\label{eqn:h prime shuffle}
\HS'_{m,n} = (1-q_1q_2)^{n-1} (1-q_2)^n \cdot \emph{Sym} \left[\frac {\prod_{i=1}^n z_i^{\left \lceil \frac {mi}n \right \rceil - \left \lceil \frac {m(i-1)}n \right \rceil}}{\prod_{i=1}^{n-1}\left(1-\frac {z_iq_1}{z_{i+1}}\right)} \prod_{1\leq i < j \leq n} \zeta \left(\frac {z_i}{z_j}\right) \right]
\end{equation}
for any $(m,n) \in \Z \times \N$.

\end{proposition}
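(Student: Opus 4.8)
The plan is to compute $\iota_{n*}[\CH'_{m,n}]$ directly, by the same mechanism that is asserted to produce \eqref{eqn:know}: an equivariant Koszul complex followed by a pushforward along a flag-variety bundle. Since $\iota_{n*} \circ R{\pi'}^\bullet_* = R(\iota_n \circ {\pi'}^\bullet)_*$, it suffices to analyze the composite $\ecc_n^\bullet \to [\A^{2n^2}/GL_n]$. This composite factors as the $B_n$-equivariant closed immersion $\oecc_n^\bullet \hookrightarrow \A^{2n^2}$ (the inclusion of pairs of matrices of shape \eqref{eqn:eccentric upper triangular} into all pairs of matrices), followed by the flag-variety bundle $[\A^{2n^2}/B_n] \to [\A^{2n^2}/GL_n]$.

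For the first map I would use the derived presentation of $\oecc_n^\bullet$: it is the derived complete intersection inside $\A^{2n^2}$ cut out by the entries $X_{ij}$ with $i \geq j+2$, the entries $Y_{ij}$ with $i \geq j$, and the $\tfrac{n(n+1)}{2}-1$ commutator equations $[X,Y]_{ij} = 0$ for $i \leq j$ (one diagonal entry omitted, being redundant by $\operatorname{Tr}[X,Y] = 0$); this is the eccentric analogue of \cite[Proposition 2.4]{GN Tr}. Since $\A^{2n^2}$ is $\torus \times B_n$-equivariantly contractible, $K_{\torus \times B_n}(\A^{2n^2}) = \Z[q_1^{\pm},q_2^{\pm}][z_1^{\pm},\dots,z_n^{\pm}]$, and the Koszul resolution expresses $[\OC_{\oecc_n^\bullet}]$ as the product of $(1-w)$ over the weights $w$ of these equations. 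In suitable conventions $X_{ij}$, $Y_{ij}$ and $[X,Y]_{ij}$ have weights $q_1 z_j/z_i$, $q_2 z_j/z_i$ and $q_1q_2 z_j/z_i$, and grouping the three families gives
\begin{equation*}
[\OC_{\oecc_n^\bullet}] = \frac{(1-q_1q_2)^{n-1}(1-q_2)^n}{\prod_{i=1}^{n-1}(1 - z_iq_1/z_{i+1})} \prod_{1 \leq i < j \leq n} (1 - z_i/z_j)\,\zeta(z_i/z_j),
\end{equation*}
with $\zeta$ as in \eqref{eqn:zeta}: the $n$ diagonal $Y$-equations give $(1-q_2)^n$, the $n-1$ diagonal commutator equations give $(1-q_1q_2)^{n-1}$, the off-shape $X$-equations together with the commutator equations at $i<j$ assemble the three numerator factors of each $\zeta(z_i/z_j)$, and the denominator records that the superdiagonal entries $X_{i+1,i}$ are retained rather than set to zero.

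Tensoring by $\bigotimes_i (\CL'_i)^{\lceil mi/n\rceil - \lceil m(i-1)/n\rceil}$ then multiplies this class by $\prod_i z_i^{\lceil mi/n\rceil - \lceil m(i-1)/n\rceil}$, and pushforward along the flag bundle $[\A^{2n^2}/B_n] \to [\A^{2n^2}/GL_n]$ is, by the standard (Atiyah--Bott) formula, the symmetrization operator $f \mapsto \text{Sym}\bigl[f\big/\prod_{i<j}(1 - z_i/z_j)\bigr]$ with values in $\Z[q_1^{\pm},q_2^{\pm}][z^{\pm}]^{\text{sym}}$. The factor $\prod_{i<j}(1 - z_i/z_j)$ cancels, the symmetric prefactor $(1-q_1q_2)^{n-1}(1-q_2)^n$ comes out of $\text{Sym}$, and what remains is exactly $\HS'_{m,n}$ of \eqref{eqn:h prime shuffle}. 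This is the precise analogue of the computation behind \eqref{eqn:h shuffle}: there $X$ is upper triangular with equal diagonal entries (producing $(1-q_1)^{n-1}$ in place of $(1-q_1q_2)^{n-1}$) and the commutator equations are imposed only strictly above the superdiagonal (replacing the denominator $\prod_i(1 - z_iq_1/z_{i+1})$ by $\prod_i(1 - z_{i+1}q_1q_2/z_i)$), which also serves as a consistency check.

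The genuine content, and the step I expect to be delicate, is the derived presentation used above: one must check that the derived structure on $\oecc_n^\bullet$ intended in the paper really is the one cut out in $\A^{2n^2}$ by the explicit equations listed — equivalently, that $\iota_n \circ {\pi'}^\bullet$ is an honest closed immersion of derived schemes — and in particular that the characteristic-polynomial equations for $Y$ which define $\ocomm_n$ become redundant, at the derived and not merely classical level, once $Y$ is strictly upper triangular, and that exactly one diagonal commutator equation (rather than none or all) is to be dropped. The remainder is bookkeeping: pinning down the equivariant conventions (the correspondence $z_i \leftrightarrow \CL'_i$, the roles of $q_1$ and $q_2$, and the orientation of the flag bundle) so as to land on \eqref{eqn:h prime shuffle} on the nose rather than on a monomial twist of it.
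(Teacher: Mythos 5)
Your proposal is correct and follows essentially the same route as the paper: the paper likewise rewrites $\ecc_n$ as a $GL_n$-quotient of a derived zero locus inside $\A^{2n^2}\times GL_n/B_n$, computes the direct image as multiplication by the Koszul complex of exactly the equations you list (the three families from $[X,Y]=0$, $X(V_i)\subseteq V_{i+1}$, $Y(V_{i+1})\subseteq V_i$, with one diagonal commutator entry dropped via $\operatorname{Tr}[X,Y]=0$), and then applies the standard integration formula over $GL_n/B_n$. Your factor-by-factor bookkeeping, including the cancellation against $\prod_{i<j}(1-z_i/z_j)$ and the identification of the residual denominator $\prod_i(1-z_iq_1/z_{i+1})$ with the retained superdiagonal entries of $X$, matches the paper's computation.
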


\begin{proof} Let $\ecc_n \xrightarrow{\pi'} \comm_n$ be the map induced by the inclusion of pairs of matrices of the form \eqref{eqn:eccentric upper triangular} into the set of all pairs of matrices. We need to prove that the composition 
\begin{equation}
\label{eqn:composition 1}
K_{\torus}(\ecc_n) \xrightarrow{\pi'_*} K_{\torus}(\comm_n) \xrightarrow{\iota_{n*}} K_{\torus \times GL_n}(\A^{2n^2})
\end{equation}
sends the class of the line bundle $\bigotimes_{i=1}^n {\CL'_i}^{\left \lceil \frac {mi}n \right \rceil - \left \lceil \frac {m(i-1)}n \right \rceil}$ to the right-hand side of \eqref{eqn:h prime shuffle}. However, in order to better understand this composition, we choose to rewrite $\ecc_n$ as a $GL_n$ quotient, as follows. Consider
$$
\widetilde{\ecc}_n \stackrel{f}\hookrightarrow \A^{2n^2} \times GL_n / B_n \stackrel{g}\twoheadrightarrow \A^{2n^2}
$$
where we think of $\A^{2n^2} \times GL_n / B_n$ as parameterizing pairs of $n \times n$ matrices $X,Y$ together with a full flag
\begin{equation}
\label{eqn:full flag}
0 = V_0 \subset V_1 \subset \dots \subset V_{n-1} \subset V_n =  \C^n
\end{equation}
(and the map $g$ forgets the flag), while $f$ denotes the derived simultaneous zero locus of the equations
\begin{equation}
\label{eqn:equations}
[X,Y] = 0 \quad \text{and} \quad X(V_i) \subseteq V_{i+1} \quad \text{and} \quad Y(V_{i+1})\subseteq V_i
\end{equation}
for all $i \in \{0,\dots,n-1\}$. Then we have
\begin{equation}
\label{eqn:ecc alt}
\ecc_n = \widetilde{\ecc}_n \Big/ GL_n
\end{equation}
which implies that $K_{\torus}(\ecc_n) = K_{\torus \times GL_n}(\widetilde{\ecc}_n)$. We may therefore rewrite the composition \eqref{eqn:composition 1} as the composition
\begin{equation}
\label{eqn:composition 2}
K_{\torus \times GL_n}(\widetilde{\ecc}_n) \xrightarrow{f_*} K_{\torus \times GL_n}(\A^{2n^2} \times GL_n / B_n) \xrightarrow{g_*} K_{\torus \times GL_n}(\A^{2n^2})
\end{equation}
Because the map $f_*$ is the direct image of a derived zero locus, it is given by multiplication with the Koszul complex of the coordinates of the equations \eqref{eqn:equations}. Thus, we have
\begin{align*}
f_*\left(\bigotimes_{i=1}^n {\CL'_i}^{\left \lceil \frac {mi}n \right \rceil - \left \lceil \frac {m(i-1)}n \right \rceil}\right) =& \prod_{i=1}^n L_i^{\left \lceil \frac {mi}n \right \rceil - \left \lceil \frac {m(i-1)}n \right \rceil} \times \\
&(1-q_1q_2)^{n-1} \prod_{1 \leq i < j \leq n} \left(1-\frac {L_jq_1q_2}{L_i} \right)\times \qquad  (\text{from }[X,Y]=0) \\
&\prod_{1 \leq i < j-1 \leq n-1} \left(1-\frac {L_iq_1}{L_j} \right)\times \qquad \qquad \quad \quad  (\text{from }X(V_{i})\subseteq V_{i+1}) \\
&(1-q_2)^n\prod_{1 \leq i < j \leq n} \left(1-\frac {L_iq_2}{L_j} \right) \qquad \qquad (\text{from }Y(V_{i+1})\subseteq V_i)
\end{align*}
where $L_1, \dots,L_n$ denote the $K$-theory classes of the tautological line bundles on $GL_n/B_n$ that parameterize the one-dimensional quotients $V_1/V_0, \dots, V_n/V_{n-1}$. Together with the well-known formula
$$
g_*(R(L_1,\dots, L_n)) = \text{Sym} \left[\frac {R(z_1,\dots,z_n)}{\prod_{1\leq i < j \leq n} \left(1-\frac {z_i}{z_j} \right)} \right]
$$
for integration over the flag variety $GL_n/B_n$ (which holds for any Laurent polynomial $R$), we note that $g_* \circ f_* \left(\otimes_{i=1}^n {\CL'_i}^{\left \lceil \frac {mi}n \right \rceil - \left \lceil \frac {m(i-1)}n \right \rceil}\right)$ precisely equals the right-hand side of \eqref{eqn:h prime shuffle}.

\end{proof}

As shown in \cite{Integral}, when $\gcd(m,n) = 1$ we have
\begin{equation}
\label{eqn:h and h prime are equal}
\HS_{m,n} = \HS'_{m,n} q_2^{n-1}
\end{equation}
which provides evidence for Conjecture \ref{conj h and h prime}.

\subsection{Shuffle algebra formulas 2}
\label{sub:shuffle formulas 2}

We continue to assume that $m \in \Z$ and $n \in \N$ are coprime. We posit that the $K$-theory class of the object $\oCH_{md,nd}$ sought for in Conjecture \ref{conj:objects intro} is given by 
\begin{equation}
\label{eqn:believe}
\iota_*[\oCH_{md,nd}] = \oHS_{md,nd}
\end{equation}
where for all $d \geq 1$ we set
\begin{multline}
\label{eqn:bar h shuffle}
\oHS_{md,nd} = (1-q_1)^{nd} (1-q_2)^{nd} \cdot \\ \text{Sym} \left[\frac {\prod_{i=1}^{nd} z_i^{\left \lfloor \frac {mi}n \right \rfloor - \left \lfloor \frac {m(i-1)}n \right \rfloor}\prod_{i=1}^{d-1} \left(q_1^i - \frac {z_{ni+1}q_1q_2}{z_{ni}} \right)}{\prod_{i=1}^d (1-q_1^i) \prod_{i=1}^{nd-1}\left(1-\frac {z_{i+1}q_1q_2}{z_i}\right)} \prod_{1\leq i < j \leq nd} \zeta \left(\frac {z_i}{z_j}\right) \right]
\end{multline}
Although the right-hand side of the above formula is not manifestly the $K$-theory class of an object in the derived category of $\comm_n$, the main result of \cite{Integral} proves that this is indeed the case. Another reason we believe that \eqref{eqn:believe} holds is the following fact (proved in \cite[Lemma 1.6]{Integral})
\begin{equation}
\label{eqn:iota k}
\CS_n = \mathop{\bigoplus^{\frac {m_1}{n_1} \leq \dots \leq \frac {m_k}{n_k}}_{(m_i,n_i) \in \Z \times \N}}_{n_1+\dots+n_k = n} \Z[q_1^{\pm 1}, q_2^{\pm 1}] \cdot \oHS_{m_1,n_1} * \dots * \oHS_{m_k,n_k}
\end{equation}
where $\CS_n = \iota_*\left( K_{\torus}(\comm_n) \right)$. 

\begin{remark}
\label{rem: n! easy}

By analogy with \eqref{eqn:know}, note that the expression
$$
(1+q_1)(1+q_1+q_1^2) \dots (1+q_1+\dots+q_1^{d-1})\oHS_{md,nd}
$$
(for all coprime $m,n$ and all $d \geq 1$) equals a linear combination of $K$-theory classes of line bundles pushed forward along the map $\pi^\bullet : \efcomm_n^\bullet \rightarrow \ecomm_n$. Similarly, \cite{Integral} shows that
\begin{multline}
\label{eqn:bar h prime shuffle}
\oHS_{md,nd} = (1-q_1^{-1}q_2^{-1})^{nd} (1-q_2)^{nd} \cdot \\ \emph{Sym} \left[\frac {\prod_{i=1}^{nd} z_i^{\left \lfloor \frac {mi}n \right \rfloor - \left \lfloor \frac {m(i-1)}n \right \rfloor}\prod_{i=1}^{d-1} \left(q_1^{-i} q_2^{-i} - \frac {z_{ni+1}}{z_{ni}q_1} \right)}{\prod_{i=1}^d (1-q_1^{-i} q_2^{-i}) \prod_{i=1}^{nd-1}\left(1-\frac {z_{i+1}}{z_i q_1}\right)} \prod_{1\leq i < j \leq nd} \zeta \left(\frac {z_i}{z_j}\right) \right]
\end{multline}
which (by analogy with formula \eqref{eqn:know prime}) implies that 
$$
(1+q_1q_2) (1+q_1q_2 + q_1^2q_2^2) \dots (1+q_1q_2+\dots+q_1^{d-1}q_2^{d-1})\oHS_{md,nd}
$$
equals a linear combination of $K$-theory classes of line bundles pushed forward along the map ${\pi'}^\bullet : \eecc_n^\bullet \rightarrow \ecomm_n$.

\end{remark}

\subsection{A substack}
\label{sub: substack}

Moreover, we have the following identity in $\CV$ (see \cite[Remark 4.9]{Integral})
\begin{equation}
\label{eqn:f to h}
\oHS_{0,n} = q_1^{\frac {n(n-1)}2} \prod_{1 \leq i, j \leq n} \left(1- \frac {z_iq_2}{z_j}\right)
\end{equation}
The expression in the right-hand side of \eqref{eqn:f to h} has long been recognized to play an important role in the study of the algebra $\CS$ (see for instance \cite{FHHSY}). Moreover, the product of factors in the right-hand side of \eqref{eqn:f to h} admits a geometric interpretation, via the closed substack
\begin{equation}
\label{eqn:closed substack}
\text{Mat}_{n \times n} \Big/ GL_n \stackrel{\nu}\hookrightarrow \comm_n, \quad X \mapsto (X,0)
\end{equation}

\begin{proposition}
\label{prop:closed substack}

The structure sheaf of the substack \eqref{eqn:closed substack} satisfies
\begin{equation}
\label{eqn:iota closed substack}
\iota_{n*} \left[\mathcal{O}_{\emph{Mat}_{n \times n} / GL_n}\right] = \prod_{1 \leq i, j \leq n} \left(1- \frac {z_iq_2}{z_j}\right)
\end{equation}

\end{proposition}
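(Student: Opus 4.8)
The plan is to reduce the statement to a single Koszul-complex computation. By definition, $\iota_{n*}$ is the map on $\torus\times GL_n$-equivariant $K$-theory induced by the embedding $\iota_n\colon \ocomm_n\hookrightarrow\A^{2n^2}$, while the class $[\mathcal{O}_{\text{Mat}_{n\times n}/GL_n}]\in K_\torus(\comm_n)=K_{\torus\times GL_n}(\ocomm_n)$ is $\bar\nu_*[\mathcal{O}_{\text{Mat}_{n\times n}}]$ for the $GL_n$-equivariant lift $\bar\nu\colon \text{Mat}_{n\times n}\hookrightarrow\ocomm_n$, $X\mapsto(X,0)$, of $\nu$. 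Since pushforward is functorial, $\iota_{n*}[\mathcal{O}_{\text{Mat}_{n\times n}/GL_n}]=(\iota_n\circ\bar\nu)_*[\mathcal{O}_{\text{Mat}_{n\times n}}]$, so the whole problem is to identify the composite $\iota_n\circ\bar\nu$.

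First I would argue that $\iota_n\circ\bar\nu$ is simply the classical regular closed embedding $\text{Mat}_{n\times n}=\{Y=0\}\hookrightarrow\A^{2n^2}$ of codimension $n^2$. This is the one place where the derived structure of $\comm_n$ must be watched: $\ocomm_n$ is the derived zero locus inside $\A^{2n^2}$ of the coefficients of $[X,Y]$ and of the non-leading coefficients of the characteristic polynomial of $Y$, so a map out of the classical scheme $\text{Mat}_{n\times n}$ into it amounts to the map $X\mapsto(X,0)$ into $\A^{2n^2}$ together with a nullhomotopy of the pulled-back equations (each of which restricts to the zero function on $\{Y=0\}$); post-composing with $\iota_n$ discards this nullhomotopy datum and remembers only the underlying ring map $\C[X_{ij},Y_{ij}]\to\C[X_{ij}]$ killing the $Y_{ij}$. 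Since the $n^2$ coordinates $Y_{ij}$ form a regular sequence, $\iota_n\circ\bar\nu$ is a genuine regular embedding and $(\iota_n\circ\bar\nu)_*\mathcal{O}$ is computed by the corresponding Koszul complex.

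It then remains to evaluate the $K$-class of that Koszul complex, namely $\lambda_{-1}$ of the conormal bundle, which is the dual of the normal space $\text{Mat}_{n\times n}(Y)=\mathfrak{gl}_n$ with its conjugation $GL_n$-action twisted by the $\torus$-scaling character on $Y$. With the conventions already used in the proof of Proposition~\ref{prop:computation} — under which the equation $Y_{ij}=0$ carries weight $z_iq_2/z_j$, as is visible from the factor $(1-q_2)^n\prod_{i<j}(1-L_iq_2/L_j)$ appearing there — this yields $\iota_{n*}[\mathcal{O}_{\text{Mat}_{n\times n}/GL_n}]=\prod_{1\le i,j\le n}(1-z_iq_2/z_j)$, which is \eqref{eqn:iota closed substack}; note that, although no individual factor is symmetric, the product over all ordered pairs $(i,j)$ is a symmetric Laurent polynomial, as it must be in order to lie in $\Z[q_1^{\pm1},q_2^{\pm1}][z_1^{\pm1},\dots,z_n^{\pm1}]^{\text{sym}}$. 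The only genuine obstacle here is the derived-geometric bookkeeping of the second paragraph — ruling out spurious Koszul factors coming from the derived structure of $\comm_n$ — and this is dispelled by the observation that the composite embedding into $\A^{2n^2}$ is classical, cut out precisely by the $Y_{ij}$. As a consistency check, combining the result with $\oHS_{0,n}=q_1^{n(n-1)/2}\prod_{i,j}(1-z_iq_2/z_j)$ from \eqref{eqn:f to h} recovers the expected identity $[\oCH_{0,n}]=q_1^{n(n-1)/2}[\mathcal{O}_{\text{Mat}_{n\times n}/GL_n}]$, compatibly with the open-locus formula \eqref{eqn:below}.
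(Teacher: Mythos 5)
Your proposal is correct and follows essentially the same route as the paper: identify $\iota_{n*}[\mathcal{O}_{\text{Mat}_{n\times n}/GL_n}]$ with the pushforward along the composite embedding $\{(X,0)\}\hookrightarrow\{(X,Y)\}$, which is cut out by the $n^2$ coordinates of $Y$, and read off the Koszul factor $\prod_{1\le i,j\le n}(1-z_iq_2/z_j)$ from the adjoint representation twisted by $q_2$. The paper's proof is terser, but your added care about the derived structure of $\ocomm_n$ (and why it contributes no spurious factors after composing with $\iota_n$) is consistent with, and a reasonable elaboration of, the intended argument.
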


\begin{proof} The left-hand side of \eqref{eqn:iota closed substack} is equal to $\iota_{n*} ( \nu_*(1))$. However, the map $\iota_{n} \circ \nu$ is simply equal to the closed embedding of the locus of matrices $(X,0)$ in the locus of matrices $(X,Y)$. This embedding is cut out by explicit equations (the coefficients of $Y$) whose Koszul complex (as $GL_n$ characters) is precisely the right-hand side of \eqref{eqn:iota closed substack}. This establishes Proposition \ref{prop:closed substack}.
\end{proof}

With Proposition \ref{prop:closed substack} in mind, formula \eqref{eqn:f to h} suggests that the correct extension of formula \eqref{eqn:below} to the entire stack $\comm_n$ should be
\begin{equation}
\label{eqn:wishful}
\oCH_{0,n} = q_1^{\frac {n(n-1)}2}\mathcal{O}_{\text{Mat}_{n \times n} / GL_n}
\end{equation}
We hope that \eqref{eqn:wishful} holds for the objects $\oCH_{0,n}$ constructed in \cite{CPT}.

\subsection{Slope subalgebras}

Summing \eqref{eqn:iota k} over all $n \in \N$ gives a PBW-type decomposition
$$
\CS = \bigoplus^{\frac {m_1}{n_1} \leq \dots \leq \frac {m_k}{n_k}}_{(m_i,n_i) \in \Z \times \N} \Z[q_1^{\pm 1}, q_2^{\pm 1}] \cdot \oHS_{m_1,n_1} * \dots * \oHS_{m_k,n_k}
$$
Since the product is taken in non-decreasing order of $\frac {m_i}{n_i}$, then $\CS$ is the ordered product of the so-called slope subalgebras
\begin{equation}
\label{eqn:slope subalgebras}
\Lambda^{\frac mn} = \Z[q_1^{\pm 1}, q_2^{\pm 1}] \Big[\oHS_{m,n}, \oHS_{2m,2n},  \oHS_{3m,3n},\dots \Big]
\end{equation}
for all coprime $(m,n) \in \Z \times \N$. It is well-known that $\oHS_{m,n}, \oHS_{2m,2n}, \oHS_{3m,3n},\dots$ all commute (see \cite{Shuf}), and so we have an isomorphism
$$
\phi^{\frac mn} : \Lambda \xrightarrow{\sim} \Lambda^{\frac mn}
$$
\begin{equation}
\label{eqn:elementary}
\phi^{\frac mn}(\bar{e}_d) = \oHS_{md,nd} 
\end{equation}
where $\Lambda$ denotes the ring of elementary symmetric polynomials in countably many variables (with two parameters $q_1,q_2$, which are none other that $q,t^{-1}$ in Macdonald's notation) and $\bar{e}_d$ denotes the $d$-th elementary symmetric function. We will also have the power sum functions $\bar{p}_d$, determined by the generating series identity
$$
1 + \sum_{d=1}^{\infty} \frac {\bar{e}_d}{(-x)^d} = \exp \left(- \sum_{d=1}^{\infty} \frac {\bar{p}_d}{dx^d} \right)
$$
As shown in \cite{Integral}, we have
\begin{equation}
\label{eqn:power}
\phi^{\frac mn} (\bar{p}_d) = \bar{\PS}_{md,nd}
\end{equation}
where $\bar{\PS}_{md,nd}$ is given by the formula
$$
\frac {(1-q_1)^{nd} (1-q_2)^{nd}}{1-q_1^d} \cdot \text{Sym} \left[\frac {\prod_{i=1}^{nd} z_i^{\left \lfloor \frac {mi}n \right \rfloor - \left \lfloor \frac {m(i-1)}n \right \rfloor}\sum_{i=0}^{d-1} \frac {z_{n(d-1)+1}\dots z_{n(d-i)+1}}{z_{n(d-1)}\dots z_{n(d-i)}} (q_1q_2)^i}{\prod_{i=1}^{nd-1}\left(1-\frac {z_{i+1}q_1q_2}{z_i}\right)} \prod_{1\leq i < j \leq nd} \zeta \left(\frac {z_i}{z_j}\right) \right]
$$
The ring $\Lambda$ is rich in automorphisms, as one can rescale the generators $\bar{p}_d$ independently and arbitrarily. Such automorphisms of $\Lambda$ are called plethysms, and the most important one for our purposes is 
\begin{equation}
\label{eqn:plethysm}
p_d = \bar{p}_d (1-q_1^d)
\end{equation}
If we consider the elements $h_d \in \Lambda$ defined by the formula
$$
1 + \sum_{d=1}^{\infty} \frac {h_d}{x^d} = \exp \left(\sum_{d=1}^{\infty} \frac {p_d}{dx^d} \right)
$$
then it was shown in \cite{Integral} that 
\begin{equation}
\label{eqn:elementary prime}
\phi^{\frac mn}(h_d) = (1-q_1)\HS_{md,nd}
\end{equation}
for all coprime $(m,n) \in \Z \times \N$ and all $d \in \N$. Therefore, if we think of $\bar{e}_d$ (resp. $\oHS_{md,nd}$ for fixed coprime $m,n$) as elementary symmetric functions, then $h_d$ (resp. $(1-q_1)\HS_{md,nd}$ for fixed coprime $m,n$) are plethystically modified complete symmetric functions.

\subsection{Ribbon Schur functions}
\label{sub:ribbon}

For any pair of Young diagrams $\mu \subset \lambda$, one may define a skew Schur function $s_{\lambda \backslash \mu} \in \Lambda$. In the present paper, we will be concerned with the particular case when $\lambda \backslash \mu$ is a ``ribbon", i.e. a connected collection of $1 \times 1$ lattice squares which does not contain any $2 \times 2$ lattice squares. Up to translation, such a ribbon is completely determined by a sequence 
$$
\varepsilon = (\varepsilon_1,\dots,\varepsilon_{d-1}) \in \{\pm\}^{d-1}
$$
and it is defined as the collection of $d$ lattice squares where the $(i+1)$-th square is directly below (respectively to the right of) the $i$-th square if $\varepsilon_i = -$ (respectively $\varepsilon_i = +$). We will write $s_{\varepsilon}$ for the corresponding skew Schur function, and note that the $s_{\varepsilon}$ are completely determined by the properties
\begin{equation}
\label{eqn:product formula}
s_{\varepsilon} s_{\varepsilon'} = s_{\varepsilon,+,\varepsilon'} + s_{\varepsilon,-,\varepsilon'}, \quad \forall \varepsilon,\varepsilon'
\end{equation}
(where $\varepsilon, x,\varepsilon'$ means the concatenation of $\varepsilon, \{x\}$ and $\varepsilon'$) and 
\begin{equation}
\label{eqn:h to ribbon}
h_d = s_{(+,\dots,+)_{d-1 \text{ pluses}}},
\end{equation}
for all $d \in \N$. Then we observe the 
formula
\begin{equation}
\label{eqn:e to ribbon}
\bar{e}_d = \sum_{\varepsilon = (\varepsilon_1, \dots, \varepsilon_{d-1}) \in \{\pm\}^{d-1}} \frac {q_1^{\sum_{1\leq i \leq d-1}^{\varepsilon_i = +} i} \cdot s_{\varepsilon}}{(1-q_1)(1-q_1^2)\dots(1-q_1^d)}
\end{equation}
It was shown in \cite{Integral} that 
$$
\phi^{\frac mn}(s_{\varepsilon}) = \sS^{\frac mn}_{\varepsilon}
$$
where for any coprime $m,n \in \Z \times \N$ and $\varepsilon = (\varepsilon_1,\dots,\varepsilon_{d-1}) \in \{\pm\}^{d-1}$, we set
\begin{multline}
\label{eqn:ribbon shuffle}
\sS^{\frac mn}_{\varepsilon} = (1-q_1)^{nd} (1-q_2)^{nd} \cdot \\ \text{Sym} \left[\frac {\prod_{i=1}^{nd} z_i^{\left \lfloor \frac {mi}n \right \rfloor - \left \lfloor \frac {m(i-1)}n \right \rfloor}\prod_{1 \leq i \leq d-1}^{\varepsilon_i = -} \left(- \frac {z_{ni+1}q_1q_2}{z_{ni}} \right)}{\prod_{i=1}^{nd-1}\left(1-\frac {z_{i+1}q_1q_2}{z_i}\right)} \prod_{1\leq i < j \leq nd} \zeta \left(\frac {z_i}{z_j}\right) \right]
\end{multline}
The formula above clearly categorifies, in the sense that it is equal (up to multiplication by $\pm (1-q_1)$) to the $K$-theory class of the push-forward of an obvious line bundle under the map $\pi^\bullet : \fcomm_{nd}^{\bullet} \rightarrow \comm_{nd}$ (compare with formulas \eqref{eqn:know}, \eqref{eqn:h shuffle}).

\subsection{Old vs new generators}
\label{sub:comparison}

In \cite{GN Tr}, we worked with the following elements of $K \stackrel{\iota_*}{\cong} \CS$
\begin{equation}
\label{eqn:r shuffle}
R_{\boldsymbol{d}} = (1-q_1)^{n-1} (1-q_2)^n \cdot \text{Sym} \left[\frac {z_1^{d_1} \dots z_n^{d_n}}{\prod_{i=1}^{n-1}\left(1-\frac {z_{i+1}q_1q_2}{z_i}\right)} \prod_{1\leq i < j \leq n} \zeta \left(\frac {z_i}{z_j}\right) \right] \in \CS
\end{equation}
defined for all $\boldsymbol{d} = (d_1,\dots,d_n) \in \Z^n$. For instance, the elements $\HS_{m,n}$ of \eqref{eqn:h shuffle} are of the form above for suitably chosen $\boldsymbol{d}$. Meanwhile, the elements $\oHS_{md,nd}$ of \eqref{eqn:bar h shuffle} are linear combinations of objects $R_{\boldsymbol{d}}$ divided by certain elements of $\Z[q_1^{\pm 1},q_2^{\pm 1}]$. We have
$$
\Z[q_1^{\pm 1}, q_2^{\pm 1}]\text{-span of }\{R_{\boldsymbol{d}} \}_{\boldsymbol{d} \in \Z^n} \subsetneq \CS
$$
but 
$$
\Q(q_1,q_2)\text{-span of }\{R_{\boldsymbol{d}} \}_{\boldsymbol{d} \in \Z^n} = \CS \bigotimes_{\Z[q_1^{\pm 1},q_2^{\pm 1}]} \Q(q_1,q_2)
$$
In other words, the elements \eqref{eqn:r shuffle} generate a proper $\Z[q_1^{\pm 1}, q_2^{\pm 1}]$-submodule of $\CS = \iota_*(K)$. By analogy with \eqref{eqn:object h}, the natural categorification of the element $R_{\boldsymbol{d}}$ is the object 
\begin{equation}
\label{eqn:r object}
 \pi^\bullet_* \left( \CL_1^{d_1} \dots \CL_n^{d_n} \right) \in D^b(\Coh_{\torus}(\comm_n))
\end{equation}
The above objects were our main interest in \cite{GN Tr}, and we indicated corresponding objects in the trace of the affine Hecke category under the functor of Problem \ref{prob:main}. However, the objects \eqref{eqn:r object} have no chance of generating the category $D^b(\Coh_{\torus}(\comm_n))$, since they do not even generate its $K$-theory. This is our main reason for proposing that one study the elements $\oHS_{md,nd}$ of \eqref{eqn:bar h shuffle} instead (especially since \cite{CPT} will construct categorifications $\oCH_{md,nd}$ of these elements and show that they generate $D^b(\Coh_{\torus}(\comm_n))$). 

\bigskip

\section{Topology: traces of affine Hecke categories}

\subsection{Notations for affine braids}

We will follow the notation of \cite{Elias,GN Tr} for affine braids. Recall that the (extended) affine braid group $\ABr_n$ is defined by generators $\sigma_0,\ldots,\sigma_{n-1}, \omega$ and relations 
$$
\sigma_i\sigma_{i+1}\sigma_i=\sigma_{i+1}\sigma_i\sigma_{i+1},\ \sigma_i\sigma_j=\sigma_j\sigma_i\ (i \neq j \pm 1),\ \omega\sigma_i\omega^{-1}=\sigma_{i+1},
$$
where the indices are understood modulo $n$. The (extended) affine symmetric group $\widetilde{S_n}$ is a quotient of $\ABr_n$ by the additional relations $\sigma_i^2=1$. The elements $\sigma_1,\ldots,\sigma_{n-1}$ generate a subgroup of $\widetilde{S_n}$ isomorphic to the symmetric group $S_n$ (resp. a subgroup of $\ABr_n$ isomorphic to the braid group $\Br_n$). We will need the  braids
$$
y_i:=\sigma_{i-1}^{-1}\cdots \sigma_1^{-1}\omega \sigma_{n-1}\cdots \sigma_{i}, \qquad \forall i \in \{1,\dots,n\}
$$
which are known to pairwise commute.

There is a grading on both $\ABr_n$ or $\widetilde{S_n}$ defined by
\begin{equation}
\label{eq: degree}
\deg(\omega)=1,\ \deg(\sigma_i)=0.
\end{equation}
Note that $\deg(y_i)=1$. Any element of $\ABr_n$ or $\widetilde{S_n}$ can be uniquely written as $\omega^k\alpha$ where $k\in \Z$ and $\deg(\alpha)=0$. 

The elements $\alpha$ of $\widetilde{S_n}$ of degree zero form a Coxeter group (of type $\widehat{A_{n-1}}$) generated by $\sigma_0,\ldots,\sigma_{n-1}$, and as such have the notions of length $\ell(\alpha)$ and Bruhat order. These extend to the full $\widetilde{S_n}$ by writing 
$$
\ell(\omega^k\alpha)=\ell(\alpha)
$$
and 
$$
\omega^k\alpha\preceq \omega^m\beta \Leftrightarrow \Big( k=m\ \mathrm{and}\ \alpha\preceq \beta \Big).
$$
Given an affine permutation $\alpha\in \widetilde{S_n}$, we can consider its positive braid lift to $\ABr_n$ by considering an arbitrary minimal length representative of $\alpha$ and replacing the generators by their namesakes in $\ABr_n$. Note that $y_i$  are not positive braid lifts of any permutations, but certain monomials in $y_i$ are (see \cite[Lemma 3.2]{GN Tr}). 

We will also use the product 
$$
\star : \ABr_n \times \ABr_{n'} \to  \ABr_{n+n'}
$$ which sends the generators
$\sigma_i,y_i$ of $\ABr_n$ to the namesake generators of $\ABr_{n+n'}$ and the generators $\sigma_j,y_j$ of $\ABr_{n'}$ to
$\sigma_{j+n}, y_{j+n}$ respectively. Topologically, we wrap an affine braid in $\ABr_{n'}$ around an affine braid in $\ABr_n$, see \cite{GN Tr} for more details and pictures. Note that the corresponding product of finite braids
$$
\star : \Br_n \times \Br_{n'} \to  \Br_{n+n'}
$$
is simply the disjoint union (or ``horizontal stacking") of braids since there is no wrapping involved.

\subsection{The affine Hecke category}
\label{sec: affine hecke}

We now define the category $\ASBim_n$ of (extended) affine Soergel bimodules, abbreviated as the affine Hecke category, following \cite{Elias,MT}.

Let $R=\C[x_1,\ldots,x_n]$ and $\tR=\C[x_1,\ldots,x_n,\delta]$, and note that the symmetric group $S_n$ acts on both $R$ and $\tR$ by permuting $x_i$ and fixing $\delta$. The rings $R$ and $\tR$ are graded such that the variables $x_i$ and $\delta$ have grading 2.
Apart from $S_n$, we have an additional endomorphism of $\tR$ given by: 
$$
\omega(\delta)=\delta,\ \omega(x_n)=x_1-\delta,\ \omega(x_i)=x_{i+1},\ 1\le i\le n-1.
$$
It is easy to see that $\omega$ and $S_n$ define an action of $\widetilde{S_n}$ on $\tR$. We will consider $R-R$ (respectively $\tR-\tR$) bimodules, the simplest being $\one = R$ (respectively $\tR$) with the usual left and right multiplication. We will also encounter the  Bott-Samelson bimodules:
$$
\overline{B_i}=R\otimes_{R^{s_i}}R,\qquad  B_i=\tR\otimes_{\tR^{s_i}}\tR 
$$
There is an additional $(\tR,\tR)$-bimodule $\Omega$ which is isomorphic to $\tR$, where the left action of $\tR$ is standard and the right action is twisted by $\omega$. One can check that  $\Omega B_i \Omega^{-1}\simeq B_{i+1}$, where for any $\tR-\tR$ bimodules $M,N$, we consider the $\tR-\tR$ bimodule
$$
MN = M \otimes_{\tR} N 
$$
The category of finite Soergel bimodules $\SBim_n$ is defined as the smallest full subcategory of $R-R$ bimodules containing $R$ and $\overline{B_i}$ and closed under tensor products, direct sums and direct summands. Similarly, the category of (extended) affine Soergel bimodules $\ASBim_n$ is defined as a smallest full subcategory of $\tR-\tR$ bimodules containing $\tR,B_i$ and $\Omega$ and closed under tensor products, direct sums and direct summands.

Note that the subcategory of  $\ASBim_n$ generated by $\tR$ and $B_i$ is equivalent to  $\SBim_n$ with scalars extended from $R$ to $\tR$. As in  \cite{Rouquier}, one can define Rouquier complexes: 
\begin{equation}
\label{eqn:rouquier complexes}
T_i=q^{-1}[B_i\xrightarrow{b_i} \one], \qquad T_i^{-1}=q^{-1}[q^2\one\xrightarrow{b^*_i} B_i]
\end{equation}
which satisfy the braid relations up to homotopy ($q$ records the grading shift). Here the maps between $B_i$ and $\one$ are given by
$$
b_i(1)=1,\quad b^*_i(1)=x_i\otimes 1-1\otimes x_{i+1}.
$$
It is easy to see that $\Omega T_i\Omega^{-1}=T_{i+1}$, so the assignment $\sigma_i \mapsto T_i, \omega \mapsto \Omega$ induces a homomorphism from $\ABr_n$ to the homotopy category $\CK(\ASBim_n)$ (by ``homomorphism" we mean an assignment of an object of $\CK(\ASBim_n)$ for any element of the affine braid group, which intertwines tensor product of objects with the multiplication of affine braids).

Given an affine permutation $v\in \widetilde{S_n}$, we can consider its positive braid lift and the corresponding Rouquier complex $T_v$. Clearly, $T_{\omega^k\alpha}=\Omega^k T_{\alpha}$. Also, we can define Rouquier complexes
$$
Y_i=T_{i-1}^{-1}\cdots T_1^{-1}\Omega T_{n-1}\cdots T_{i}
$$
corresponding to the braids $y_i$.
Computing $\Hom$'s between the products of $Y_i$ and more general Rouquier complexes remains a major open problem, see \cite{Maltoni} for sample computations. Here we will need two partial results.
 
First, Elias proved in \cite{Elias} that whenever $\deg(\alpha)=\deg(\beta)=0$ we have
$$
\Hom_{\ASBim_n}(\Omega^k T_{\alpha},\Omega^{k'}T_{\beta})=\begin{cases}
\Hom_{\ASBim_n}(T_{\alpha},T_{\beta}) & \text{if}\ k=k'\\
0 & \text{otherwise}.
\end{cases}
$$
In particular, $\Hom_{\ASBim_n}(X,Y)$ vanishes unless $\deg(X)=\deg(Y).$
Furthermore, the work of Libedinsky-Williamson \cite{LW} implies the following result. 
\begin{theorem} 
\label{thm:LW}
Let $u,v$ be two permutations in  $\widetilde{S_n}$. Then $\Hom(T_u,T_v)=0$ unless $u\preceq v$ in the Bruhat order. 
\end{theorem}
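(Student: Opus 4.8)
The plan is to reduce the statement to the non-extended affine Hecke category, where it is part of Libedinsky--Williamson's theory of Rouquier complexes. First I would strip off the powers of $\Omega$: write $u=\omega^k\a$ and $v=\omega^m\b$ with $\deg(\a)=\deg(\b)=0$, so that $\a,\b$ lie in the affine Coxeter group $W=\langle\sigma_0,\dots,\sigma_{n-1}\rangle$ of type $\widehat{A}_{n-1}$; then $T_u=\Omega^kT_\a$ and $T_v=\Omega^mT_\b$. By Elias's vanishing theorem quoted above, $\Hom(\Omega^kT_\a,\Omega^mT_\b)$ is zero when $k\neq m$ and equals $\Hom(T_\a,T_\b)$ when $k=m$. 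On the other hand, by the very definition of the Bruhat order on $\widetilde{S_n}$, one has $u\preceq v$ only if $k=m$, in which case $u\preceq v\Leftrightarrow\a\preceq\b$. Thus the case $k\neq m$ is trivial, and the theorem reduces to the assertion that, for $\a,\b\in W$, one has $\Hom(T_\a,T_\b)=0$ unless $\a\preceq\b$, the $\Hom$ being taken (graded, in the homotopy category) in the full subcategory of $\ASBim_n$ generated by $\tR$ and $B_0,\dots,B_{n-1}$.

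This subcategory is the diagrammatic Hecke category attached to the Coxeter system $(W,S)$ of type $\widehat{A}_{n-1}$ and the ($\delta$-deformed) realization on $\tR$ set up in Elias's work. For such a category, Libedinsky--Williamson's study of Rouquier complexes --- the ``standard objects in $2$-braid groups'' --- establishes precisely the statement $\Hom(T_w,T_x)=0$ whenever $w\not\preceq x$; morally this is the Rouquier-complex shadow of the fact that in a highest-weight category there are no nonzero maps from a standard object $\Delta(w)$ to a standard object $\Delta(x)$ unless $w\leq x$. Concretely, it is extracted from their structural results: the orthogonality $\Hom^\bullet(\Delta_w,\nabla_x)=0$ for $w\neq x$ between standard and costandard complexes, together with the (co)standard filtrations of the products $T_w^{-1}T_x$ and the identification $\Hom^\bullet(T_w,T_x)\cong\Hom^\bullet(\one,T_w^{-1}T_x)$, which combine to force $w\preceq x$ whenever the left-hand side is nonzero. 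Applying this with $w=\a$ and $x=\b$ completes the reduction.

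The one point that is genuinely a matter of care --- rather than bookkeeping --- is the hypothesis-matching hidden in the phrase ``the realization set up in Elias's work'': the $2$-braid group results of Libedinsky--Williamson are proved for Coxeter systems equipped with realizations satisfying suitable non-degeneracy conditions (reflection faithfulness, or its weakening in Abe's framework), and here $\widehat{A}_{n-1}$ is an \emph{infinite} Coxeter group with an extra ``central'' variable $\delta$. One must therefore check that the realization on $\tR=\C[x_1,\dots,x_n,\delta]$ underlying $\ASBim_n$ falls within that scope, and that the degree-zero part of $\ASBim_n$ is genuinely the corresponding Hecke category of $\widehat{A}_{n-1}$ --- which is exactly what \cite{Elias} provides. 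So the proof is ultimately a matter of assembling \cite{Elias} and \cite{LW} in the right order; everything else, including the $\Omega$-reduction above, is routine.
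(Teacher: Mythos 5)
Your proposal is correct and follows essentially the same route as the paper, which offers no detailed proof: it states Elias's degree decomposition $\Hom(\Omega^k T_\alpha,\Omega^{k'}T_\beta)$ immediately beforehand and then asserts the theorem as a consequence of Libedinsky--Williamson \cite{LW}, exactly the two-step reduction (strip off $\Omega$ using the definition $\omega^k\alpha\preceq\omega^m\beta\Leftrightarrow k=m$ and $\alpha\preceq\beta$, then invoke the vanishing for standard objects in the non-extended affine Hecke category) that you carry out. Your added remark about checking that the $\delta$-deformed realization on $\tR$ falls within the scope of \cite{LW} is the only substantive point the paper leaves implicit, and you resolve it the same way, via \cite{Elias}.
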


We expect the existence of dg bifunctors:
$$ 
\CK(\ASBim_n)\boxtimes \CK(\ASBim_{n'})
\xrightarrow{\star} \CK(\ASBim_{n+n'})
$$
whose action on objects matches the product $\ABr_n\times \ABr_{n'} \xrightarrow{\star} \ABr_{n+n'}$  above. If $T_{\alpha}\in\CK(\ASBim_n)$ and $T_{\beta}\in \CK(\ASBim_{n'})$ are Rouquier complexes for affine
braids, then the object $T_{\alpha}\star T_{\beta}=T_{\alpha\star \beta}$ is well defined in $\CK(\ASBim_{n+n'})$ (this follows from the braid
relations). However, defining $\star$ on morphisms remains an open problem. See also \cite{LMRSW} for related higher categorical constructions.

\subsection{The trace categories}
\label{sec: traces}

We will use the formalism of categorical traces, developed in \cite{BHLZ,BPW} and further extended to dg categories in \cite{GHW}. Let $\mathcal{C}$ be a monoidal (dg) category. We define another dg category $\Tr(\mathcal{C})$, called the {\em derived horizontal trace} of $\mathcal{C}$. as following:

\begin{itemize}
\item The objects of $\Tr(\mathcal{C})$ are in bijection with the objects of $\mathcal{C}$. For $X\in \mathcal{C}$ there is an object $\Tr(X)\in \Tr(\mathcal{C})$. 
\item The morphism set $\Hom_{\Tr(\mathcal{C})}(\Tr(X),\Tr(Y))$ is defined as the following complex. For each $k\ge 1$, one considers all possible collections of objects $Z_1,\ldots,Z_k$ in $C$ and all possible sequences of  morphisms
\begin{equation}
\label{eq: Hom dg trace}
\Hom_{\mathcal{C}}(XZ_k,Z_1Y)\otimes \Hom_{\mathcal{C}} (Z_1,Z_2)\otimes \cdots \otimes \Hom _{\mathcal{C}}(Z_{k-1},Z_k).
\end{equation}
The differential, defined in \cite{GHW}, is a variant of the bar complex differential together with the internal differentials on these $\Hom$ complexes.
\item In particular, one can consider 
$$
\Hom^0_{\Tr(\mathcal{C})}(\Tr(X),\Tr(Y))=H^0\left(\Hom_{\Tr(\mathcal{C})}(\Tr(X),\Tr(Y))\right)=\bigoplus_{Z}\Hom_{\mathcal{C}}(XZ,ZY)/\sim
$$
which corresponds to the ``underived" horizontal trace in \cite{BHLZ,BPW}.
\item The composition of morphisms is defined as a variant of the shuffle product on the bar complex, see \cite{GHW} for details.
\item There is a canonical trace functor $\Tr: \mathcal{C}\to \Tr(\mathcal{C})$ which sends $X$ to $\Tr(X)$.
\end{itemize}
We will be interested in the trace categories for $\mathcal{C}=\SBim_n,\ASBim_n$, and implicitly identify these with $\Tr(\CK(\SBim_n)),\Tr(\CK(\ASBim_n))$ after pre-triangulated completion \cite{GHW}.

For $\Tr(\SBim_n)$, we recall some of the main results of \cite{GHW,GW}. First, we describe the endomorphism algebra of the trace of the identity object (also known as the {\em vertical trace}) in the category of Soergel bimodules.

\begin{theorem}(\cite[Proposition 6.20, Theorem 7.6]{GHW})
\label{thm: vertical trace}
We have\footnote{Here and in Theorem \ref{thm: endo omega m} we mean the homology of the endomorphism complex.}
$$
\End_{\Tr(\SBim_n)}(\Tr(\one_n))\simeq \HH_*(R)\rtimes \C[S_n].
$$
In particular,
$
\End^0_{\Tr(\SBim_n)}(\Tr(\one_n))\simeq R\rtimes \C[S_n].
$
\end{theorem}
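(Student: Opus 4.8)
The plan is to verify Theorem~\ref{thm: vertical trace} by understanding the horizontal trace of $\SBim_n$ at the object $\one_n$ concretely, and then recognizing the answer as the smash product $\HH_*(R)\rtimes\C[S_n]$. Since the statement is attributed to \cite[Proposition 6.20, Theorem 7.6]{GHW}, I would follow that circle of ideas rather than reinvent everything; still, here is how the argument should go.

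First I would reduce the computation of $\End_{\Tr(\SBim_n)}(\Tr(\one_n))$ to a more tractable model. By the very definition of the derived horizontal trace recalled in Subsection~\ref{sec: traces}, an endomorphism of $\Tr(\one_n)$ is built out of data $\Hom_{\SBim_n}(Z_k, Z_1)\otimes\Hom(Z_1,Z_2)\otimes\cdots$ (with $X=Y=\one_n$), i.e. it is the Hochschild-homology-flavored bar complex of the monoidal category $\SBim_n$ evaluated at $\one$. The key structural input is that $\SBim_n$ is generated (as an idempotent-complete monoidal category) by the Bott--Samelson bimodules $\overline{B_i}$, so a careful bar-complex / skein-relation argument shows the trace is controlled by the Rouquier complexes $T_w$ for $w\in S_n$ together with their cyclic (conjugation) identifications. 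Concretely, the vertical trace decomposes over conjugacy-type data in $S_n$: each permutation $w\in S_n$ contributes a summand, and the ``diagonal'' contribution (from $w=e$, the identity) is exactly $\HH_*(R,R)=\HH_*(R)$, the Hochschild homology of the polynomial ring, which since $R$ is smooth is the exterior algebra $\Lambda^*[\xi_1,\dots,\xi_n]$ over $R$ (Hochschild--Kostant--Rosenberg). The off-diagonal summands assemble, via the multiplication coming from the shuffle product on the bar complex and the relation $\Omega$-type conjugation $w\overline{B_i}w^{-1}\simeq\overline{B_{w(i)}}$, into the group-algebra factor $\C[S_n]$, and the way these two pieces interact is precisely the crossed-product (smash-product) structure $\HH_*(R)\rtimes\C[S_n]$, where $S_n$ permutes the variables $x_i$ (hence the $\xi_i$) of $R$.

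In more detail, the steps I would carry out are: (1) set up the bar complex for $\Tr(\one_n)$ and use the fact that every object of $\SBim_n$ is a summand of a direct sum of shifts of Bott--Samelson bimodules to replace arbitrary $Z_i$ by Bott--Samelson ones; (2) run a ``Markov move / trace'' computation that collapses the bar complex — this is where one uses that $\overline{B_i}$ is self-biadjoint and that $\Tr$ turns tensor products into a cyclic structure — to land on a complex indexed by elements of $S_n$; (3) identify the $w$-graded piece: for $w=e$ one gets the Koszul-type resolution computing $\HH_*(R)$, and for general $w$ one gets a shifted copy reflecting the $w$-twisted bimodule $R_w$; (4) compute the product structure on homology, showing multiplication respects the $S_n$-grading and realizes the crossed product, in particular that the $\C[S_n]$-part acts on $\HH_*(R)=R\otimes\Lambda^*[\xi_\bullet]$ by permuting indices; (5) extract $\End^0$ by taking the degree-zero ($=$ homological degree zero, internal-degree-zero) part, which kills the $\xi_i$ and leaves $R\rtimes\C[S_n]$.

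The main obstacle — and the technical heart of \cite{GHW} — is step~(2): controlling the derived bar complex of the monoidal category $\SBim_n$ well enough to prove that its homology is no bigger than the $S_n$-indexed answer, i.e. establishing the relevant ``finiteness/collapse'' of the trace. This requires genuinely understanding morphism complexes between (products of) Bott--Samelson bimodules and their cyclic closures, rather than just their Euler characteristics; it is the step where the smallness of Soergel bimodules (Soergel's hom formula / the diagrammatic calculus) is essential, and it is why the theorem is a substantial result and not a formal consequence of the definitions. Everything downstream — the HKR identification of $\HH_*(R)$, the bookkeeping of gradings so that they match the homological and internal gradings of the trace, and the recognition of the crossed-product multiplication — is comparatively routine once step~(2) is in hand.
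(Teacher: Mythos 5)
This statement is not proved in the paper at all: it is quoted from \cite[Proposition 6.20, Theorem 7.6]{GHW}, and the closest in-paper model for its proof is the argument given for Theorem \ref{thm: endo omega m}, which explicitly ``follows the proof of \cite[Theorem 7.6]{GHW}''. Measured against that argument, your outline has the right global shape (bar complex for $\End_{\Tr(\SBim_n)}(\Tr(\one_n))$, collapse to an $S_n$-indexed object, HKR, crossed product), but the step you yourself flag as the technical heart --- step (2) --- is set up with the wrong reduction, and as described it would not go through. The mechanism is not a reduction to Bott--Samelson bimodules followed by a ``Markov move'' using self-biadjointness of $\overline{B_i}$; Hom complexes between Bott--Samelson bimodules are large free $R$-modules with no vanishing that would collapse the bar complex. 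What actually makes the computation finite is the semiorthogonal decomposition by the \emph{standard objects} $T_w$ (Rouquier complexes of positive braid lifts), i.e.\ \cite[Theorem 5.11]{GHW} combined with the Libedinsky--Williamson vanishing $\Hom(T_u,T_v)=0$ unless $u\preceq v$ (Theorem \ref{thm:LW}). One then gets $w_1\preceq\cdots\preceq w_k$ from the interior Hom's and $w_k\preceq w_1$ from the cyclic factor $\Hom(\one\, T_{w_k},T_{w_1}\one)$, forcing all $Z_i$ to equal a single $T_w$; this Bruhat-order squeeze is the collapse, and your proposal never identifies it.

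A second inaccuracy: your step (3) describes the $w$-graded piece for $w\neq e$ as ``a shifted copy reflecting the $w$-twisted bimodule $R_w$''. That would compute something like $\HH_*(R,R_w)$, which is supported on the fixed locus of $w$ and is strictly smaller than what appears. In fact each $w\in S_n$ contributes a full \emph{untwisted} copy of $\HH_*(R)$, namely $[w]\cdot\End(T_w)^{\otimes\bullet}$ with $\End(T_w)\simeq R$, and the answer is the direct sum $\bigoplus_{w\in S_n}[w]\cdot\HH_*(R)$ with the crossed-product multiplication --- this is visible already from the degree-zero statement $\End^0\simeq R\rtimes\C[S_n]$, whose $w$-component is all of $R$ for every $w$. (The identifications $x_i\sim x_{v(i)}$ that you may be thinking of, as in Lemma \ref{lem: endo trace rouquier}, arise from the conjugating object --- e.g.\ $\Omega^{md}_{nd}$ in Theorem \ref{thm: endo omega m} --- and are trivial here since $X=Y=\one_n$.) So the proposal correctly locates the difficulty but supplies neither the key lemma that resolves it nor the correct description of the resulting decomposition.
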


As a corollary, there is an action of $S_n$ on $\Tr(\one_n)$ by endomorphisms. For each partition $\lambda\vdash n$, we denote by $\ee_{\lambda}\in \C[S_n]$ the projector to the irreducible representation labeled by $\lambda$.

\begin{definition}
We define the object 
$
\Tr(\one_n)^{\lambda}:=\ee_{\lambda}\Tr(\one_n)\in \Tr(\SBim_n)^{\Kar}
$
as the direct summand of $\Tr(\one)$ corresponding to $\ee_{\lambda}$. We will denote 
$
\Tr(\one_n)^{(n)}=\Tr(\one_n)^{\mathrm{sym}}
$
and 
\begin{equation}
\label{eq: def En bar}
\bar{E}_n:=\Tr(\one_n)^{(1^n)}=\Tr(\one_n)^{\mathrm{antisym}}.
\end{equation}
\end{definition}

We would like to emphasize that $\Tr(\one_n)^{\lambda}$ are not objects in the trace category $\Tr(\SBim_n)$, but rather in its idempotent completion $\Tr(\SBim_n)^{\Kar}$. 

Given a graded pre-triangulated dg category $\mathcal{C}$, recall that its (graded, split) Grothendieck group $K_0(\mathcal{C})$ is generated by the isomorphism classes of of objects $[X]$ modulo relations
$$
[X\oplus Y]=[X]+[Y],\ [qX]=q[X],\ [X[1]]=-[X],\ \left[\Cone(X[1]\rightarrow Y)\right]=[Y]-[X].
$$
Here, as above, $qX$ denotes the grading shift and $X[1]$ the homological shift.

\begin{theorem}[\cite{GW}]
\label{thm: schurs generate finite}
a) The objects $\Tr(\one_n)^{\lambda}$ generate the idempotent completion $\Tr(\SBim_n)^{\Kar}$. In other words, any object of $\Tr(\SBim_n)^{\Kar}$ can be resolved by a complex built from $\Tr(\one_n)^{\lambda}$.

b) The (graded) Grothendieck group of $\Tr(\SBim_n)^{\Kar}$ is isomorphic to the space of degree $n$ symmetric functions with coefficients in $\C[q^{\pm 1}]$. The isomorphism identifies the object $\Tr(\one_n)^{\lambda}$ with the Schur function $s_{\lambda}.$
\end{theorem}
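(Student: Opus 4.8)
The plan is to establish part (a) first, and then deduce part (b) by a Grothendieck-group count. For (a), I would begin with the reduction that $\CK(\SBim_n)$ is generated, as a triangulated category, by the Rouquier complexes $T_w$ for $w\in S_n$: from $T_i=q^{-1}[\overline{B_i}\to\one]$ one sees that $\overline{B_i}$ lies in the triangulated hull of $\{\one,T_i\}$ and conversely, so the triangulated hull of $\{T_w\}_{w\in S_n}$ contains every Bott--Samelson bimodule, hence all of $\SBim_n$ up to summands. Since $\Tr\colon\CK(\SBim_n)\to\Tr(\SBim_n)$ is a dg functor, it carries totalizations of complexes to iterated cones; therefore every object of $\Tr(\SBim_n)^{\Kar}$ lies in the thick closure of the objects $\Tr(T_w)$, and it suffices to resolve each $\Tr(T_w)$ by the $\Tr(\one_n)^{\lambda}$.

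Next I would use the cyclic invariance of the horizontal trace, $\Tr(AB)\cong\Tr(BA)$: combined with the braid relations it gives $\Tr(T_{sws})\cong\Tr(T_w)$ for each length-preserving cyclic shift, and hence (by Geck--Pfeiffer) $\Tr(T_w)\cong\Tr(T_{w_{\min}})$ for a minimal-length representative $w_{\min}$ of the conjugacy class of $w$. Such a $w_{\min}$ is a product of disjoint ``contiguous cycles'', so the external product $\star$ of the excerpt yields $\Tr(T_w)\simeq\Tr(T_{c_1})\star\cdots\star\Tr(T_{c_r})$ with each $c_j$ a single cycle. For one cycle I would resolve $\Tr(T_c)$ by feeding the distinguished triangle $\overline{B_i}\to\one\to qT_i$ back into $T_c$ and iterating: each step trades a crossing for a cap--cup and lowers the complexity of the braid, until $\Tr(T_c)$ is a finite complex built from cabled ``core'' objects $\Tr(\one_{j_1})\star\cdots\star\Tr(\one_{j_\ell})$; taking $\star$-products over $c_1,\dots,c_r$ does the same for $\Tr(T_w)$. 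Since $\Tr(\one_j)=\Tr(\one_1)^{\star j}$ contains each $\Tr(\one_j)^{\mu}$ ($\mu\vdash j$) as a summand, it remains to show that $\star$-products of the objects $\Tr(\one_j)^{\mu}$ are resolved by the single-block objects $\Tr(\one_n)^{\nu}$ --- a categorical lift of the Littlewood--Richardson expansion $s_{\mu^{(1)}}\cdots s_{\mu^{(\ell)}}=\sum_\nu c^\nu_{\mu^{(1)}\cdots\mu^{(\ell)}}s_\nu$, equivalently of the unitriangular transition between the $\Tr(\one_n)^{\nu}$ and the external products $\bar E_{n_1}\star\cdots\star\bar E_{n_k}$ of antisymmetrizers. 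This last step is what forces one to control morphisms in the trace category: one needs enough vanishing of negative $\Hom$'s among the $\Tr(\one_n)^{\nu}$ --- a semiorthogonality/positivity statement extending Theorem~\ref{thm: vertical trace} --- so that the $K$-theoretic identities can be promoted to genuine filtrations of objects, by induction on the dominance order. I expect this morphism control to be the main obstacle, and the place where the explicit computations of \cite{GHW,GW} are indispensable.

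Finally, (b) follows from (a) by counting ranks. By (a) the $p(n)$ classes $[\Tr(\one_n)^{\lambda}]$, $\lambda\vdash n$, span $K_0(\Tr(\SBim_n)^{\Kar})$. On the other hand the decategorification map sends this Grothendieck group into the cocenter $H_n/[H_n,H_n]$ of the Hecke algebra over $\C[q^{\pm1}]$, and the Frobenius characteristic identifies the latter with the degree-$n$ symmetric functions over $\C[q^{\pm1}]$; under this identification $[\Tr(\one_n)^{\lambda}]$ goes to the Frobenius characteristic $s_\lambda$ of the irreducible $S_n$-representation $\lambda$, consistently with $[\Tr(\one_n)]=\Tr(\one_1)^{\star n}=p_1^n=\sum_\lambda f^\lambda s_\lambda$, where $f^\lambda$ is the number of standard tableaux of shape $\lambda$. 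Since the $s_\lambda$ form a basis, the composite is surjective; comparing with the bound from (a) it is an isomorphism, the $[\Tr(\one_n)^{\lambda}]$ are a basis, and $[\Tr(\one_n)^{\lambda}]\mapsto s_\lambda$, which is assertion (b).
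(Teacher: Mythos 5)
First, a caveat on the comparison: the paper does not reprove this statement — it is imported from \cite{GW} — so the relevant benchmark is the argument there together with the ingredients the paper does quote, namely Theorem \ref{thm: vertical trace}, Theorem \ref{thm: ribbon}, and the corollary immediately following the theorem. Your skeleton (reduce to traces of braids, use cyclic invariance to reduce to products of single cycles, resolve those by idempotent pieces of $\Tr(\one)$, then recombine) is indeed the strategy of \cite{GW}. However, you have misplaced the difficulty. The step you single out as the main obstacle — decomposing $\Tr(\one_{n_1})^{\mu_1}\star\cdots\star\Tr(\one_{n_k})^{\mu_k}$ into the $\Tr(\one_n)^{\nu}$ — requires no semiorthogonality and no control of negative $\Hom$'s. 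Since $\Tr(\one_{n_1})\star\cdots\star\Tr(\one_{n_k})\simeq\Tr(\one_n)$ compatibly with $S_{n_1}\times\cdots\times S_{n_k}\subset S_n$, and since by Theorem \ref{thm: vertical trace} all of $\C[S_n]$ acts in degree zero on $\Tr(\one_n)$, the idempotent $\ee_{\mu_1}\otimes\cdots\otimes\ee_{\mu_k}$ decomposes inside $\C[S_n]$ and cuts out a \emph{direct sum} of copies of the $\Tr(\one_n)^{\nu}$ with Littlewood--Richardson multiplicities; there is no filtration to split and no induction on dominance order. This is precisely how the corollary after the theorem (and its slope-$\frac{m}{n}$ analogue, Corollary \ref{cor: schurs generate in slope 2}) is proved.

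The genuine gap is in the step you treat as routine: resolving the trace of a single cycle by products of $\Tr(\one_j)$'s. Two problems. (i) The triangulated hull of $\{\one,T_i\}$ is not closed under tensor product, so your generating set must include $T_\beta$ for arbitrary non-reduced positive braid words $\beta$ (e.g.\ $T_iT_i\neq T_{s_i}$), not just $T_w$ with $w\in S_n$; Geck--Pfeiffer then does not apply verbatim. (ii) More seriously, "iterate the skein triangle until the complexity drops" is not a visibly terminating procedure and does not by itself land you on objects of the form $\Tr(\one_{j_1})\star\cdots\star\Tr(\one_{j_\ell})$: passing from a trace of a Bott--Samelson product to traces on fewer strands is the categorified Markov move (partial trace of $B_i$), which requires the derived $\Hom$ computations of \cite{GHW}. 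In \cite{GW} this entire step is packaged as the explicit finite resolution $\Tr(C_\varepsilon)[|\varepsilon|_+]\simeq\pi_\varepsilon K_n$ (quoted here as Theorem \ref{thm: ribbon}), where $K_n$ is the Koszul complex on $\Tr(\one_n)$ — that identity is the real content of part (a), and your proposal assumes rather than proves it. For (b), your count is right in outline, but the "decategorification map" $K_0(\Tr(\SBim_n)^{\Kar})\to H_n/[H_n,H_n]$ is not the functorial one (the trace functor induces a map in the opposite direction); linear independence of the classes $[\Tr(\one_n)^{\lambda}]$ is instead extracted from the graded Euler characteristics of the complexes $\ee_{\mu}\bigl(\HH_*(R)\rtimes\C[S_n]\bigr)\ee_{\lambda}$ supplied by Theorem \ref{thm: vertical trace}, which give a pairing separating the $p(n)$ classes.
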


In Section \ref{sec: ribbon Schur top} we present some explicit computations of such resolutions. Theorem \ref{thm: schurs generate finite} can be compared with the constructions in Sections \ref{sub: slope zero} and \ref{sub: substack}  as follows. The object $\CH_{0,1}\in D^b(\text{Coh}_{\torus}(\Comm_1))$ corresponds to $H_{0,1}=\Tr(\one_1)\in \Tr(\SBim_1)\subset \Tr(\ASBim_1).$ Similarly, $(\CH_{0,1})^{\star n}$ corresponds to 
\begin{equation}
\label{eq: power H zero one}
(H_{0,1})^{\star n}=(\Tr(\one_1))^{\star n}=\Tr(\one_n)\in \Tr(\SBim_n)\subset \Tr(\ASBim_n).
\end{equation}
We expect that the actions of $S_n$ on $(\CH_{0,1})^{\star n}$ and on $\Tr(\one_n)$  agree, and the antisymmetric components $\oCH_{0,n}$ and $\bar{E}_{n}$ match as well.

\begin{corollary}
The objects 
$\bar{E}_{n_1}\star \cdots \star \bar{E}_{n_k}$
where $n_1+\ldots+n_k=n$
generate the idempotent completion $\Tr(\SBim_n)^{\Kar}$.
\end{corollary}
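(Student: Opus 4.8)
The plan is to combine the generation statement of Theorem~\ref{thm: schurs generate finite}(a) with the representation theory of the symmetric group, reducing the claim to the single case $k=1$ already handled there. By Theorem~\ref{thm: schurs generate finite}(a), any object of $\Tr(\SBim_n)^{\Kar}$ can be resolved by complexes built out of the objects $\Tr(\one_n)^{\lambda}$ for $\lambda\vdash n$; hence it suffices to show that each $\Tr(\one_n)^{\lambda}$ is itself isomorphic to a complex built from the objects $\bar E_{n_1}\star\cdots\star\bar E_{n_k}$ with $n_1+\dots+n_k=n$. Since by \eqref{eq: power H zero one} we have $\Tr(\one_n)=(\Tr(\one_1))^{\star n}$, and the $S_n$ action on $\Tr(\one_n)$ comes from Theorem~\ref{thm: vertical trace}, the natural strategy is to see the products $\bar E_{n_1}\star\cdots\star\bar E_{n_k}$ as the images of Young-subgroup sign-idempotents acting on $\Tr(\one_n)$.

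First I would make precise that for a composition $(n_1,\dots,n_k)$ of $n$, the object $\bar E_{n_1}\star\cdots\star\bar E_{n_k}$ is the image of the idempotent $\ee_{(1^{n_1})}\otimes\cdots\otimes\ee_{(1^{n_k})}\in\C[S_{n_1}]\otimes\cdots\otimes\C[S_{n_k}]\subset\C[S_n]$ acting on $\Tr(\one_n)$; this uses that $\star$ is compatible with the algebra maps $\C[S_{n_1}]\otimes\cdots\otimes\C[S_{n_k}]\to\C[S_n]$ and with the endomorphism actions coming from Theorem~\ref{thm: vertical trace}, i.e. that $\bar E_{n_i}=\ee_{(1^{n_i})}\Tr(\one_{n_i})$ and $\star$ of these summands is the summand of $\Tr(\one_n)$ cut out by the product idempotent. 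Concretely, $\ee_{(1^{n_1})}\otimes\cdots\otimes\ee_{(1^{n_k})}$ is (up to scalar) the sign-symmetrizer over the Young subgroup $S_{n_1}\times\cdots\times S_{n_k}\subseteq S_n$. The image of this idempotent in $\C[S_n]$, as an $S_n$-representation, is the induced module $\mathrm{Ind}_{S_{n_1}\times\cdots\times S_{n_k}}^{S_n}(\mathrm{sgn}\boxtimes\cdots\boxtimes\mathrm{sgn})$, whose decomposition into irreducibles is governed by the Littlewood--Richardson / Pieri rule for products of elementary symmetric functions $e_{n_1}\cdots e_{n_k}=\sum_\lambda K_{\lambda'(n_1,\dots,n_k)}\,s_\lambda$, with $s_{(1^n)}=e_n$ occurring with multiplicity one in $e_1^n$.

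The key algebraic input is then: the elementary symmetric functions $e_{n_1}\cdots e_{n_k}$ (equivalently, the characters of the induced sign representations) span the full representation ring $R(S_n)\otimes\Q$ — indeed they form a $\Z$-basis indexed by partitions when one takes $(n_1\ge n_2\ge\cdots)$ — so every irreducible character $\chi^\lambda$, hence every primitive central idempotent and (after a standard argument) every Young idempotent $\ee_\lambda$, lies in the span of the $\ee_{(1^{n_1})}\otimes\cdots\otimes\ee_{(1^{n_k})}$. Passing from ``$\ee_\lambda$ is an integer (or rational) linear combination of the product idempotents in $\C[S_n]$'' to ``$\Tr(\one_n)^{\lambda}$ is a summand of a complex built from the $\bar E_{n_1}\star\cdots\star\bar E_{n_k}$'' is where one must be slightly careful: a relation $\ee_\lambda=\sum c_{\underline n}\,\ee_{(1^{n_1})}\otimes\cdots$ in a Karoubian context translates not into an equality of objects but into the statement that $\Tr(\one_n)^\lambda$ is a direct summand of $\bigoplus_{\underline n}(\bar E_{n_1}\star\cdots\star\bar E_{n_k})^{\oplus m_{\underline n}}$ after passing to an appropriate iterated cone/Karoubi envelope; the clean way to see this is to use the unitriangularity of the Pieri rule — order partitions by dominance, note $e_{n_1}\cdots e_{n_k}=s_{\mu'}+(\text{higher in dominance})$ for the partition $\mu=(n_1,\dots,n_k)$ sorted — and induct, so that each $\Tr(\one_n)^\lambda$ is obtained from the $\bar E$-products by iterated passage to direct summands, which is exactly what ``resolved by'' allows in $\Tr(\SBim_n)^{\Kar}$.

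The main obstacle I anticipate is the bookkeeping in this last step: one must check that the formal manipulations with idempotents in $\C[S_n]$ genuinely lift to the idempotent-completed \emph{triangulated} category $\Tr(\SBim_n)^{\Kar}$ — in particular that the $S_n$-action of Theorem~\ref{thm: vertical trace} is by honest endomorphisms of $\Tr(\one_n)$ (not merely up to homotopy), so that one may split off summands, and that Theorem~\ref{thm: schurs generate finite}(a)'s notion of ``resolve'' is closed under the operations (finite direct sums, shifts, cones, direct summands) needed here. Granting that — which is implicit in the statement that the $\Tr(\one_n)^\lambda$ \emph{generate} $\Tr(\SBim_n)^{\Kar}$ — the corollary follows formally: the $\bar E_{n_1}\star\cdots\star\bar E_{n_k}$ generate the same subcategory as the $\Tr(\one_n)^\lambda$, namely all of $\Tr(\SBim_n)^{\Kar}$.
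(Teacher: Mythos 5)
Your proposal is correct and follows essentially the same route as the paper: identify $\bar E_{n_1}\star\cdots\star\bar E_{n_k}$ with the summand of $\Tr(\one_n)$ cut out by the Young-subgroup sign idempotent (using compatibility of $\star$ with the $S_{n_1}\times\cdots\times S_{n_k}\subset S_n$ action), observe that it corresponds to $e_{n_1}\cdots e_{n_k}$, and use that Schur functions are expressible in terms of products of elementary symmetric functions together with Theorem \ref{thm: schurs generate finite}. Your extra care with the unitriangularity of the Pieri rule and the passage from idempotent identities to direct summands is exactly the bookkeeping the paper delegates to \cite{GW}.
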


\begin{proof}
We have
$$
\Tr(\one_{n_1})\star \cdots \star \Tr(\one_{n_k})=\Tr(\one_{n}),
$$
and the isomorphism is compatible with the action of $S_{n_1}\times \cdots\times S_{n_k}\subset S_n$. Therefore 
$\bar{E}_{n_1}\star \cdots \star \bar{E}_{n_k}
$
corresponds to the symmetric function $e_{n_1}\cdots e_{n_k}$, and any Schur function of degree $n$ can be expressed in terms of these. See \cite{GW} for more details.

\end{proof}

Note that in the notation of \cite{GW}, the product 
$\bar{E}_{n_1}\star \cdots \star \bar{E}_{n_k}$ corresponds to $k$ concentric circles colored by $n_1,\ldots,n_k$.

\begin{example}
Let $w_0$ denote the longest element in $S_n$, and let $B_{w_0}$ be the corresponding indecomposable Soergel bimodule. Then by \cite[Example 8.10]{GHW} one has
$$
\Tr(B_{w_0})=[n!]_q\bar{E}_n=(1+q)\cdots (1+q+\ldots+q^{n-1})\bar{E}_n.
$$
In particular, $[n!]_q\bar{E}_n$ belongs to the trace category before Karoubi completion, 
in agreement with Remark \ref{rem: n! easy}.
\end{example}

We also recall some basic properties of $\Tr(\ASBim_n)$ from \cite{GN Tr}.
First, it is clear that $\Hom_{\Tr(\ASBim_n)}(\Tr(X),\Tr(Y))=0$ unless $\deg(X)=\deg(Y)$.   Indeed, for \eqref{eq: Hom dg trace} to be non-zero, one must have $\deg(Z_1)=\ldots=\deg(Z_k)$ and 
$\deg(XZ_k)=\deg(Z_1Y)$, which forces $\deg(X)=\deg(Y)$. Furthermore, we have the following.

\begin{lemma}\cite[Lemma 3.19]{GN Tr}
\label{lem: endo trace rouquier}
For any affine braid $v\in \ABr_n$ we have a natural action of the quotient: 
$$
R_v:=\widetilde{R}\Big/(\delta,x_1-v(x_1), \dots, x_n - v(x_n))
$$
on the trace $\Tr(T_v)$. Here, as above, $v$ acts on $\widetilde{R}$ via its projection to $\widetilde{S_n}$. In particular, the action of $\delta$ on $\Tr(T_v)$ is trivial for all $v$.

\end{lemma}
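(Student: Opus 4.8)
The plan is to produce the $R_v$-action as a quotient of the tautological $\widetilde{R}$-action on $\Tr(T_v)$ that comes from $\widetilde{R}=\End_{\ASBim_n}(\one)$, and then to identify the two families of relations $\delta\mapsto 0$ and $x_i\mapsto v(x_i)$.

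First, for any object $X$ of $\CK(\ASBim_n)$ the unit constraints $\one X\cong X\cong X\one$ give chain maps $\widetilde{R}\to\End_{\CK(\ASBim_n)}(X)$, namely left and right multiplication on the complex of $\widetilde{R}$-bimodules $X$, and composing with the trace functor $\Tr$ yields an action of $\widetilde{R}$ on $\Tr(X)$ by endomorphisms. Take $X=T_v$. By cyclicity of the trace, the far-left and far-right ends of $T_v$ are glued into a single strand of the closed-up diagram, so a "polynomial dot" $f\in\widetilde{R}$ placed on that strand can be transported around it in two ways: around the outside, which does nothing, or through $T_v$. Since Rouquier complexes categorify the action of $\widetilde{S_n}$ on $\widetilde{R}$ — the model relation being $\Omega L_{x_i}\Omega^{-1}=L_{x_{i+1}}=L_{\omega(x_i)}$, with the general case following from the braid relations for the $T_i$ of \eqref{eqn:rouquier complexes} — pushing the dot through $T_v$ relabels $f$ by $v(f)$, up to homotopy. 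Comparing the two transports gives $\Tr(L_f)=\Tr(L_{v(f)})$ on $\Tr(T_v)$, so the action kills $f-v(f)$ for every $f$; as $v$ acts as a ring automorphism of $\widetilde{R}=\C[x_1,\dots,x_n,\delta]$ fixing $\delta$, this is exactly the ideal $(x_1-v(x_1),\dots,x_n-v(x_n))$.

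Second — and this is the \emph{crux} — one must see that $\delta$ itself acts as zero, which for $v$ with no $\omega$-content is not a consequence of the previous relations. The mechanism is that the ambient monoidal category contains the invertible object $\Omega$, so the bar complex computing $\Hom_{\Tr(\ASBim_n)}$ in \eqref{eq: Hom dg trace} has summands indexed by $Z$'s involving $\Omega^{\pm 1}$, and "closing up" an $\Omega$-bubble against $\Omega^{-1}$ produces, for a dot $g\in\widetilde{R}$, the boundary $g-\omega(g)\in\widetilde{R}=\End(\one)$; hence the $\widetilde{R}$-action on $\Tr(T_v)$ is in fact $\omega$-invariant, and chaining $x_1\sim x_2\sim\cdots\sim x_n\sim\omega(x_n)=x_1-\delta$ forces $\delta\sim 0$. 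Equivalently, one first establishes $\delta$-triviality on $\Tr(\one_n)$ — where it reflects the absence of the parameter $\delta$ on the commuting stack — and then propagates it, the relevant homotopy living in the ambient category and being insensitive to $v$. Combining this with the first paragraph shows the $\widetilde{R}$-action descends to $R_v$, and multiplicativity of the $R_v$-action is then formal.

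I expect the genuine difficulty to be the second step carried out at the level of complexes rather than of $H^0$: one needs the sliding and closing-up operations to be chain homotopy equivalences intertwining the $\widetilde{R}$-action up to coherent homotopy, the null-homotopy witnessing $g\sim\omega(g)$ to be chosen functorially in $v$, and the signs and internal grading shifts in the bar differential of \eqref{eq: Hom dg trace} — combined with the explicit homotopies for the braid relations and for $\Omega B_i\Omega^{-1}\simeq B_{i+1}$ — to produce exactly the claimed boundaries. The remaining bookkeeping (reducing $v$ to a word in the generators $\sigma_i^{\pm 1},\omega^{\pm 1}$, passing to pretriangulated completions throughout) is routine.
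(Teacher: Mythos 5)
Your first paragraph is essentially the standard (and surely the intended) argument for the relations $x_i-v(x_i)$: the bar differential in \eqref{eq: Hom dg trace} with $Z_1=Z_2=\one$ identifies the classes of left and right multiplication by any $f\in\tR$ on $\Tr(T_v)$, and for a Rouquier complex right multiplication by $f$ is homotopic to left multiplication by $v(f)$. (Note that the present paper does not reprove the lemma; it quotes it from \cite[Lemma 3.19]{GN Tr}, so the comparison is with that argument.) Observe also that $\sum_i\bigl(x_i-v(x_i)\bigr)=\deg(v)\,\delta$, so over $\C$ the relation $\delta=0$ already follows from the first step whenever $\deg(v)\neq 0$; you correctly isolate the degree-zero case (e.g.\ $T_v=\one_n$) as the crux.

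However, your mechanism for that crux does not work. In the complex \eqref{eq: Hom dg trace} there is no boundary of the form $g-\omega(g)$ in the $Z=\one$ summand: since $\Hom_{\ASBim_n}(X,Y)=0$ unless $\deg X=\deg Y$ (recalled right before Theorem \ref{thm:LW}), all the $Z_i$ in a nonzero chain carry one and the same $\Omega$-degree, and every component of the bar differential deletes one $Z_i$ while preserving the degrees of the remaining ones; hence the summands built from $Z$'s containing $\Omega^{\pm1}$ never map to the $Z=\one$ summand, and ``closing an $\Omega$-bubble against $\Omega^{-1}$'' only produces $[\mathrm{id}_{\Omega\Omega^{-1}}]=[\mathrm{id}_{\one}]$, not a null-homotopy of $g-\omega(g)$. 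What the wrap-around class $u=[\mathrm{id}_\Omega]\in\Hom_{\ASBim_n}(\one\cdot\Omega,\Omega\cdot\one)$ genuinely gives is the relation $u\circ L_f=L_{\omega^{\pm1}(f)}\circ u$ for the left-multiplication operators: $L_f$ and $L_{\omega(f)}$ are \emph{conjugate} endomorphisms of the trace, not homotopic ones, and since $\omega(\delta)=\delta$ this yields no information about $L_\delta$. Worse, the statement you derive --- that the $\tR$-action on $\Tr(T_v)$ is $\omega$-invariant --- proves far too much: on $\Tr(\one_n)$ it would force $x_1\sim x_2\sim\cdots\sim x_n$, which contradicts Theorem \ref{thm: endo omega m} (applied with $m=0$, $n=1$ and $d$ equal to the present $n$), whose degree-zero endomorphism ring $\C[x_1,\dots,x_n]\rtimes\widetilde{S_n}$ contains the $x_i$ as independent elements, and it is likewise incompatible with the lemma's own assertion that all of $R_v$ (for $v=e$, all of $\C[x_1,\dots,x_n]$) acts. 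So the triviality of $\delta$ for degree-zero braids --- the one genuinely new point of the lemma beyond dot-sliding --- is not established by your argument; it requires the separate argument of \cite[Lemma 3.19]{GN Tr}, which must take place entirely within the fixed $\Omega$-degree piece of \eqref{eq: Hom dg trace} and is of a different nature from the $\Omega$-sliding you propose.
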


\subsection{Schur objects with arbitrary slope}

In this section we would like to generalize the above results to an arbitrary slope $\frac mn \in \Q$, as in \eqref{eqn:slope subalgebras}. Following \cite{GN Tr}, we define some natural objects in the trace category. Fix $(m,n)\in \Z\times \N$ such that $\gcd(m,n)=1$. 

\begin{definition}
For any $d\ge 1$ and coprime $(m,n)$ as above we define
$$
H_{md,nd}=\Tr
\left(\prod_{i=1}^{nd}Y_i^{\lfloor\frac{mi}{n}\rfloor-\lfloor\frac{m(i-1)}{n}\rfloor}\cdot T_1\cdots T_{nd-1}\right)\in \Tr(\ASBim_{nd}).
$$
\end{definition}

Note that for $m=0$ and $n=1$ we get
$$
H_{0,d}=\Tr(T_1\cdots T_{d-1}).
$$

\begin{theorem}\cite[Theorems 4.7-4.8]{GN Tr}
\label{thm: convex paths}
The trace category $\Tr(\ASBim_n)$ is generated by the   products 
\begin{equation}
\label{eq: product Pmn}
H_{m_1d_1,n_1d_1}\star H_{m_2d_2,n_2d_2}\cdots \star H_{m_kd_k,n_kd_k}
\end{equation}
where $n_1d_1+\ldots+n_kd_k=n$ and $\frac{m_1}{n_1}\le \frac{m_2}{n_2}\le\cdots \le \frac{m_k}{n_k}$.
\end{theorem}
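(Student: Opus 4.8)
The plan is to deduce the statement from a chain of reductions whose decategorified shadow is precisely the PBW-type decomposition \eqref{eqn:iota k} of the shuffle algebra $\CS_n$. Two structural facts drive everything. First, the trace functor $\Tr\colon\CK(\ASBim_n)\to\Tr(\ASBim_n)$ is exact: it is a dg functor, commutes with cones and grading shifts, and remains so after pre-triangulated completion; consequently, if a family $\{X_i\}$ generates $\CK(\ASBim_n)$ as a triangulated category, then $\{\Tr(X_i)\}$ generates $\Tr(\ASBim_n)$. Second, traces are conjugation invariant, $\Tr(AB)\cong\Tr(BA)$, so $\Tr(T_v)$ depends only on the conjugacy class of the affine braid $v$, and $\Tr$ intertwines $\star$ of braids with $\star$ of objects. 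As a first reduction, note that every object of $\ASBim_n$ is, up to grading shifts, a direct sum of objects $\Omega^k B$ with $B$ an indecomposable affine Soergel bimodule of type $\widehat{A}_{n-1}$, hence a summand of a Bott--Samelson bimodule. Tensoring by the invertible object $\Omega^k$ is an autoequivalence sending $T_v\mapsto T_{\omega^k v}$, so we may take $k=0$; and by induction on the Bruhat order, a Bott--Samelson bimodule for a reduced word of $w$ decomposes as $B_w$ plus a sum of $B_{w'}$ with $w'\prec w$, and (using \eqref{eqn:rouquier complexes}, which writes each $B_i$ as a two-term complex on $T_i$ and $\one$) lies in the triangulated subcategory of $\CK(\ASBim_n)$ generated by Rouquier complexes $T_v$ of affine braids. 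Peeling off the lower $B_{w'}$ via the inductive hypothesis and the split triangle $B\to A\oplus B\to A\to B[1]$, we conclude that $B_w$ lies there too. Applying $\Tr$, it remains to place each $\Tr(T_v)$, $v\in\ABr_n$, in the triangulated subcategory $\mathcal{T}$ generated by the products \eqref{eq: product Pmn}.

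Next I would reduce to convex (periodic) braids. Using conjugation invariance, replace $v$ by a convenient conjugate: bring the positive braid lift into a standard ``staircase'' form whose closure on the annulus is encoded by a lattice path, and decompose the path into its primitive segments $(n_1,m_1),\dots,(n_k,m_k)$. When the path is \emph{convex} — i.e.\ the slopes satisfy $\tfrac{m_1}{n_1}\le\cdots\le\tfrac{m_k}{n_k}$ — the associated trace is, by the very construction of $\star$ on objects together with the definition of $H_{md,nd}$ (grouping equal-slope segments into the $d_i$-fold cable), exactly a product of the form \eqref{eq: product Pmn}. So the whole content is concentrated in the non-convex case.

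The crux — and the step I expect to be the main obstacle — is \emph{straightening}: showing that the trace of a braid attached to a non-convex path lies in the triangulated category generated by traces of braids attached to paths ``closer to convex''. The input is a supply of distinguished triangles among Rouquier complexes: the failure of $T_iT_j\simeq T_jT_i$ for non-commuting indices, together with analogous short complexes built from the $Y_i$, produces — after applying $\Tr$ and using conjugation invariance — triangles in $\Tr(\ASBim_n)$ that trade one concave corner of a path for paths with strictly smaller value of a nonnegative integer statistic (a lattice-area defect). One must check that (a) each local move is realized by an honest triangle whose third term is supported only on strictly simpler paths — this is where explicit manipulation of Bott--Samelson bimodules, the bar-type differential defining $\Tr$, and the $\Hom$-vanishing results of Elias \cite{Elias} and Theorem \ref{thm:LW} are used to rule out unwanted contributions — and (b) the statistic strictly decreases, so the induction terminates at convex paths. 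The decategorified check that this is possible is exactly \eqref{eqn:iota k}: the PBW basis of $\CS_n$ is indexed by non-decreasing slope sequences, and the relations $\oHS_a*\oHS_b=\oHS_b*\oHS_a+(\text{lower PBW terms})$ for wrongly ordered slopes guarantee there is enough room for the straightening to close. Steps one and two are formal given standard facts about affine Soergel bimodules and the trace formalism; the production of the straightening triangles and the termination argument are the real work, and would follow the analysis of convex paths in \cite{GN Tr}.
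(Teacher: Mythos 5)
This theorem is not proved in the present paper: it is quoted verbatim from \cite[Theorems 4.7--4.8]{GN Tr}. Your skeleton does match the strategy of \emph{loc.\ cit.}: reduce generation of $\Tr(\ASBim_n)$ to the traces $\Tr(T_w)$ of Rouquier complexes via the two-term description \eqref{eqn:rouquier complexes} of $B_i$ and induction on the Bruhat order, then reduce each $\Tr(T_w)$ to distinguished conjugacy-class representatives, which are exactly the products \eqref{eq: product Pmn}. But there are two concrete problems with how you distribute the difficulty. First, ``replace $v$ by a convenient conjugate'' is not licensed by $\Tr(AB)\simeq\Tr(BA)$ alone: for $u,w\in\widetilde{S_n}$ one has $T_uT_wT_u^{-1}\simeq T_{uwu^{-1}}$ only when the lengths are compatible. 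The actual reduction is an induction on $\ell(w)$ that uses the Geck--Pfeiffer/He--Nie theorem (every element of an affine Weyl group can be moved to a minimal-length element of its conjugacy class by a chain of elementary conjugations $w\mapsto sws$ along which the length never increases), combined, at each step where $\ell(sws)=\ell(w)-2$, with the triangle relating $T_s^2$ to $T_s$ and $\one$ so that $\Tr(T_w)\simeq\Tr(T_{sws}T_s^2)$ is built from traces of strictly shorter elements. This combinatorial input is entirely missing from your proposal.

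Second, the step you identify as the crux --- ``straightening'' non-convex paths by skein triangles governed by a lattice-area statistic --- is not needed and is not how the cited proof goes. Once $w$ is a minimal-length representative of its conjugacy class, the class is determined by the cycle lengths $n_id_i$ and degrees $m_id_i$ of the underlying affine permutation (\cite[Lemma 3.6]{GN Tr}), and its trace is \emph{already} the product \eqref{eq: product Pmn} with the slopes listed non-decreasingly; since reordering the factors of a conjugacy class costs nothing, there is no residual ``non-convex case.'' Conversely, your proposed induction is not viable as written: the area-defect statistic is never defined, the triangles that are supposed to trade a concave corner for simpler paths are not produced, and the appeal to \eqref{eqn:iota k} is only a $K$-theoretic consistency check --- it cannot substitute for exhibiting the distinguished triangles in $\Tr(\ASBim_n)$. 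So the proposal correctly reproduces the two formal reductions but replaces the genuine core of the argument (minimal-length elements in conjugacy classes of $\widetilde{S_n}$) with an undefined induction, and explicitly defers the remaining work back to \cite{GN Tr}.
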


In what follows we will work with the affine Hecke categories $\ASBim_n$ for different $n$, so we will denote the objects $\Omega, \one$ by $\Omega_n,\one_n \in \ASBim_n$, to avoid confusion.

\begin{lemma}
Suppose that $\gcd(m,n)=1$. Then we have the following isomorphisms in $\Tr(\ASBim_{nd})$:
\begin{equation}
\label{eq: omega from Pmn}
\Tr\left(\Omega^{md}_{nd}\right)\simeq H_{m,n}\star \cdots \star H_{m,n}=\left(H_{m,n}\right)^{\star d}
\end{equation}
and
\begin{equation}
\label{eq: pmn from omega}
\Tr\left(\Omega^{md}_{nd} T_1\cdots T_{d-1}\right)\simeq H_{md,nd}.
\end{equation}
\end{lemma}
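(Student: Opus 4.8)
The plan is to reduce each of the two isomorphisms to an identity in the extended affine braid group $\ABr_{nd}$, which is then lifted to the trace category by two formal facts. First, the derived horizontal trace is cyclic, $\Tr(XZ)\simeq\Tr(ZX)$; taking $X=T_a$ and $Z=T_\gamma T_a^{-1}$ and using $T_a^{-1}T_a\simeq\one$ gives $\Tr(T_{a\gamma a^{-1}})\simeq\Tr(T_\gamma)$ for all $a,\gamma\in\ABr_{nd}$. Second, on objects $T_\alpha\star T_{\alpha'}=T_{\alpha\star\alpha'}$, and the product $\star\colon\ABr_n^{\times d}\to\ABr_{nd}$ is a group homomorphism: it sends generators to index-shifted generators, the images of the $d$ factors pairwise commute, and the defining relations are manifestly preserved. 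Recall also that $T_{\omega^kv}=\Omega^kT_v$ in $\ABr_{nd}$, so $\Tr(\Omega^{md}_{nd})=\Tr(T_{\omega^{md}})$ and $\Tr(\Omega^{md}_{nd}T_1\cdots T_{d-1})=\Tr(T_{\omega^{md}\sigma_1\cdots\sigma_{d-1}})$.

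For \eqref{eq: omega from Pmn}: by definition $H_{m,n}=\Tr(T_\beta)$ with $\beta=\prod_{i=1}^{n}y_i^{d_i}\sigma_1\cdots\sigma_{n-1}\in\ABr_n$ and $d_i=\lfloor mi/n\rfloor-\lfloor m(i-1)/n\rfloor$. The key braid-theoretic input is that for coprime $(m,n)$ the element $\beta$ is conjugate to the periodic element $\omega^m$ in $\ABr_n$; this is precisely the case $d=1$, and follows from the affine braid manipulations of \cite{GN Tr} (cf.\ \cite[Lemma 3.2]{GN Tr} and the proof of \cite[Theorems~4.7--4.8]{GN Tr}). Applying the homomorphism $\star^{(d)}$ to a conjugation $\beta=a\omega^m a^{-1}$ gives $\beta^{\star d}=a^{\star d}(\omega^m_n)^{\star d}(a^{\star d})^{-1}$, while $(\omega^m_n)^{\star d}=\bigl((\omega_n)^{\star d}\bigr)^m$; so it remains to check the ``cabling'' identity $(\omega_n)^{\star d}\sim\omega^d_{nd}$ in $\ABr_{nd}$, a direct computation using $\omega_n=\sigma_1\cdots\sigma_{n-1}y_n$ and $\omega y_i\omega^{-1}=y_{i+1}$. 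Raising this to the $m$-th power gives $\beta^{\star d}\sim\omega^{md}_{nd}$, and cyclicity then yields $(H_{m,n})^{\star d}=\Tr(T_{\beta^{\star d}})\simeq\Tr(T_{\omega^{md}})=\Tr(\Omega^{md}_{nd})$.

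For \eqref{eq: pmn from omega}: here $H_{md,nd}=\Tr(T_\gamma)$ with $\gamma=\bigl(\prod_{i=1}^{nd}y_i^{e_i}\bigr)\sigma_1\cdots\sigma_{nd-1}$, where $e_i=\lfloor mi/n\rfloor-\lfloor m(i-1)/n\rfloor$ is $n$-periodic, so $(e_1,\dots,e_{nd})$ is $d$ concatenated copies of $(d_1,\dots,d_n)$. Expanding the $\star$-product and commuting each block's $y$'s past the Coxeter elements of the other blocks gives $\beta^{\star d}=\bigl(\prod_{i=1}^{nd}y_i^{e_i}\bigr)\prod_{k=0}^{d-1}\bigl(\sigma_{kn+1}\cdots\sigma_{kn+n-1}\bigr)$, hence $\gamma=\beta^{\star d}\cdot\tau$ for the degree-zero braid $\tau=\bigl[\prod_{k=0}^{d-1}(\sigma_{kn+1}\cdots\sigma_{kn+n-1})\bigr]^{-1}\sigma_1\cdots\sigma_{nd-1}$, whose underlying permutation is a $d$-cycle gluing the blocks. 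Conjugating $\gamma$ by the same element that carries $\beta^{\star d}$ to $\omega^{md}_{nd}$ above turns $\gamma$ into $\omega^{md}_{nd}$ times a conjugate of $\tau$, and a final braid manipulation identifies that conjugate with the Coxeter element $\sigma_1\cdots\sigma_{d-1}$. Thus $\gamma\sim\omega^{md}_{nd}\sigma_1\cdots\sigma_{d-1}$ in $\ABr_{nd}$, and cyclicity gives $H_{md,nd}\simeq\Tr(\Omega^{md}_{nd}T_1\cdots T_{d-1})$.

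The step I expect to be the main obstacle is the very last one: showing not merely that $\gamma$ is conjugate to $\omega^{md}_{nd}$ times \emph{some} element of the symmetric group permuting the $d$ blocks, but that this element is exactly $\sigma_1\cdots\sigma_{d-1}$ --- together with its input, the cabling identity $(\omega_n)^{\star d}\sim\omega^d_{nd}$. Both are elementary but demand careful tracking of conjugating elements in $\ABr_{nd}$. Everything else is formal, granted the cyclicity of the derived horizontal trace \cite{GHW} and the compatibility of $\star$ with Rouquier complexes at the level of objects.
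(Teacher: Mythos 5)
Your route --- reduce both isomorphisms to conjugacy of explicit braids in $\ABr_{nd}$ and then invoke cyclicity of the derived trace --- is genuinely different from the paper's, which is a two-line application of \cite[Theorem 4.8]{GN Tr}: that theorem states that for $T_v$ the positive braid lift of a minimal-length representative of a conjugacy class in $\widetilde{S_{nd}}$, the object $\Tr(T_v)$ is the product \eqref{eq: product Pmn} determined by the cycle lengths and degrees of $v$. The paper then only has to observe that $\omega^{md}_{nd}$ has $d$ cycles of length $n$ and degree $m$, that $\omega^{md}_{nd}\sigma_1\cdots\sigma_{d-1}$ has a single cycle of degree $md$, and that both are minimal-length representatives of their conjugacy classes. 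No braid-level conjugation needs to be exhibited.

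As written, your proposal has a genuine gap rather than just unfinished bookkeeping: the three substantive claims --- that $\beta$ is conjugate to $\omega^m$ \emph{in the braid group} $\ABr_n$ (not merely that the underlying affine permutations are conjugate), the cabling identity $(\omega_n)^{\star d}\sim\omega^d_{nd}$, and the identification of $\omega^{md}_{nd}$ times the conjugated $\tau$ with (a conjugate of) $\omega^{md}_{nd}\sigma_1\cdots\sigma_{d-1}$ --- carry essentially all of the content of the lemma, and none is proved. Conjugacy of affine permutations does not by itself yield conjugacy of chosen braid lifts; what makes such statements true here is Geck--Pfeiffer/He--Nie theory (minimal-length representatives of a conjugacy class in $\widetilde{S_{nd}}$ are linked by elementary conjugations, which do lift to the braid group) together with \cite[Lemma 3.2]{GN Tr} identifying the relevant $y$-monomials as positive braid lifts --- but assembling these ingredients is precisely the proof of \cite[Theorem 4.8]{GN Tr}, so it is both shorter and safer to cite that theorem outright, as the paper does. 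In particular, the $d=1$ input you defer to ``the affine braid manipulations of \cite{GN Tr}'' is most cleanly \emph{that very theorem}, which already gives both \eqref{eq: omega from Pmn} and \eqref{eq: pmn from omega} without any cabling detour. If you wish to keep your route, you must actually exhibit the conjugating elements and verify the final identification; the formal framework (cyclicity of the trace, multiplicativity of Rouquier complexes, $\star$ on objects) is fine.
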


\begin{proof}
Both equations follow from \cite[Theorem 4.8]{GN Tr} which states that if $T_v$ is the Rouquier complex for a positive braid lift of a minimal length representative in the conjugacy class of $v\in \widetilde{S_{nd}}$ then $\Tr(T_v)$ can be uniquely written as a product \eqref{eq: product Pmn}. Moreover, $n_i$ correspond to the lengths of cycles in the projection of $v$ to $S_{nd}$, and $m_i$ correspond to the degrees of these cycles, see  \cite[Lemma 3.6]{GN Tr}.

Clearly, $\Omega^{md}_{nd}$ and $\Omega^{md}_{nd} T_1\cdots T_{d-1}$ are Rouquier complexes for positive braid lifts of permutations. In the first case, the permutation $v_1=\omega^{md}_{nd}$ has $d$ cycles of length $n$ and degree $m$, and in the second case $v_2=\omega^{md}_{nd} \sigma_1\cdots \sigma_{d-1}$ has one cycle of degree $md$. Finally, it is easy to see that $v_1$ and $v_2$ are minimal length representatives in their  respective conjugacy classes.
\end{proof}

For example, for $m=0$ and $n=1$ we get $\Tr(\Omega_d^0)=\Tr(\one_d)\simeq (H_{0,1})^{\star d}$ in agreement with \eqref{eq: power H zero one} and $\Tr(T_1\cdots T_{d-1})\simeq H_{0,d}$.

\begin{remark}
\label{rmk: parallel}
Topologically, the closures of affine braids correspond to link diagrams drawn on the torus, see \cite{GN Tr} for more context. In particular, $\Omega^{md}_{nd}$ corresponds to $d$ parallel copies of the $(m,n)$ torus knot while $\Omega^{md}_{nd}T_1\cdots T_{d-1}$ corresponds to a connected knot diagram where these $d$ copies are joined together with the minimal possible number $(d-1)$ of crossings.
\end{remark}

\begin{lemma}
\label{lem: centralizer pi m}
If $\gcd(m,n)=1$ then the centralizer of $\omega^{md}_{nd}$ in $\widetilde{S_{nd}}$ is  isomorphic to $\widetilde{S_d}$. For $d=1$ it is generated by $\omega_{nd}$, and for $d>1$ it is generated by $\omega_{nd}$ and $\widehat{\sigma_0},\widehat{\sigma_1},\cdots,\widehat{\sigma_{d-1}}$ where
$$
\widehat{\sigma_i}=\sigma_i\sigma_{i+d}\cdots \sigma_{i+nd-d}.
$$
\end{lemma}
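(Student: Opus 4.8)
The plan is to prove the statement by making the group $\widetilde{S_N}$ (for $N=nd$) completely concrete. Recall the classical realization of the extended affine symmetric group $\widetilde{S_N}$ as the group of bijections $w\colon\Z\to\Z$ with $w(j+N)=w(j)+N$, in which $\sigma_i$ is the affine transposition exchanging the residue classes $i$ and $i+1$ modulo $N$ (i.e.\ $\sigma_i$ swaps $j\leftrightarrow j+1$ whenever $j\equiv i\pmod N$ and fixes all other $j$), and $\omega$ is the shift $j\mapsto j+1$. One checks that this assignment satisfies the relations $\omega\sigma_i\omega^{-1}=\sigma_{i+1}$, the braid and commutation relations among the $\sigma_i$, and $\sigma_i^2=1$, and that it is compatible with the action of $\widetilde{S_n}$ on $\tR$ used above once one identifies $x_{j+N}=x_j-\delta$. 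Under this realization, $\omega^{md}_{nd}$ becomes the translation $\tau\colon j\mapsto j+md$.

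First I would compute the centralizer of $\tau$ directly. A bijection $w$ of $\Z$ with $w(j+nd)=w(j)+nd$ commutes with $\tau$ if and only if $w(j+md)=w(j)+md$ for all $j$. Since $\gcd(m,n)=1$ we have $\gcd(md,nd)=d$, so there are integers $a,b$ with $a\cdot md+b\cdot nd=d$; applying the two periodicity relations gives $w(j+d)=w(j)+d$ for all $j$. Conversely, any $w$ with $w(j+d)=w(j)+d$ automatically satisfies $w(j+nd)=w(j)+nd$ and commutes with $\tau$. Hence the centralizer of $\omega^{md}_{nd}$ in $\widetilde{S_{nd}}$ is exactly the subgroup of bijections of $\Z$ of period $d$, which is by definition a copy of $\widetilde{S_d}$, embedded in $\widetilde{S_{nd}}$ as the period-$d$ affine permutations.

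Finally I would match the standard generators of this $\widetilde{S_d}$ with the elements in the statement. The shift $j\mapsto j+1$ has period $d$, so $\omega_{nd}$ lies in the centralizer and plays the role of $\omega_d$; for $d=1$ this already shows the centralizer is $\langle\omega_{nd}\rangle\cong\widetilde{S_1}$. For $d>1$, the indices $i,i+d,\dots,i+nd-d$ are pairwise distinct and non-adjacent modulo $nd$ (their pairwise differences being nonzero multiples of $d\ge 2$ of absolute value $<nd$), so the factors of $\widehat{\sigma_i}=\sigma_i\sigma_{i+d}\cdots\sigma_{i+nd-d}$ pairwise commute; as a bijection of $\Z$, the factor $\sigma_{i+kd}$ swaps $j\leftrightarrow j+1$ for $j\equiv i+kd\pmod{nd}$, and as $k$ ranges over $0,\dots,n-1$ these residue classes exhaust all $j\equiv i\pmod d$. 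Thus $\widehat{\sigma_i}$ swaps $j\leftrightarrow j+1$ precisely when $j\equiv i\pmod d$, i.e.\ it is the affine transposition of residues $i,i+1$ modulo $d$, the standard generator $\sigma_i$ of $\widetilde{S_d}$. Since $\omega_d,\sigma_0,\dots,\sigma_{d-1}$ generate $\widetilde{S_d}$, it follows that $\omega_{nd}$ together with $\widehat{\sigma_0},\dots,\widehat{\sigma_{d-1}}$ generate the centralizer, completing the proof.

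I expect the whole argument to be short and essentially elementary once the affine-permutation model is in place; the only step demanding genuine care is the bookkeeping in the first paragraph, namely verifying that the generators $\omega$ and (especially) $\sigma_0$ of $\widetilde{S_n}$ as defined through their action on $\tR$ really correspond to the shift and the affine transposition described above. After that, the Bézout step and the identification of $\widehat{\sigma_i}$ are routine residue arithmetic.
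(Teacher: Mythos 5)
Your proof is correct and follows essentially the same route as the paper's: both identify $\widetilde{S_{nd}}$ with period-$nd$ bijections of $\Z$, observe that commuting with the translation $j\mapsto j+md$ forces period $d$ via the gcd of $md$ and $nd$, and then match $\widehat{\sigma_i}$ with the standard generators of $\widetilde{S_d}$. Your version simply spells out the Bézout step and the commutation of the factors of $\widehat{\sigma_i}$ more explicitly than the paper does.
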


\begin{proof}
Recall \cite{BB} that we can identify $\widetilde{S_{nd}}$ with the set of bijections $v:\Z\to \Z$ such that $v(i+nd)=v(i)+nd$. Under this identification, $\omega_{nd}$ corresponds to the shift $\omega_{nd}(i)=i+1$, so that $\omega_{nd}^{md}(i)=i+md$.

We have $v\omega^{md}_{nd}=\omega^{md}_{nd} v$ if $v(i+md)=v(i)+md$ for all $i$. Since $v(i+nd)=v(i)+nd,$  we conclude that $v(i+d)=i+d$ for all $i$, so $v$ corresponds to an element of $\widetilde{S_d}$.
The generators $\widehat{\sigma_i}$ of $\widetilde{S_d}$ swap $i+td$ with $(i+1)+td$ for all $t$, and indeed can be written in terms of generators of $\widetilde{S_{nd}}$ as described in the statement of Lemma \ref{lem: centralizer pi m}.
\end{proof}

The following result generalizes Theorem \ref{thm: vertical trace} to  the affine case and arbitrary slope $\frac mn$.

\begin{theorem}
\label{thm: endo omega m}
Suppose that  $\gcd(m,n)=1$. Then 
$$
\End_{\Tr(\ASBim_{nd})}\left(\Tr(\Omega^{md}_{nd})\right)\simeq \HH_*(R_d)\rtimes \widetilde{S_d}
$$
where $R_d=R/(x_i-x_{i+md})\simeq \C[x_1,\ldots,x_d]$. In particular,
$$
\End^0_{\Tr(\ASBim_{nd})}\left(\Tr(\Omega^{md}_{nd})\right)\simeq R_d\rtimes \widetilde{S_d}.
$$
\end{theorem}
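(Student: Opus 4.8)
The plan is to mimic the strategy of \cite{GHW} for Theorem \ref{thm: vertical trace}, but carried out $\omega^{md}_{nd}$-equivariantly. The starting point is the observation that $\Tr(\Omega^{md}_{nd})$ is the trace of a Rouquier complex for a single permutation $v=\omega^{md}_{nd}\in\widetilde{S_{nd}}$, so we can try to compute $\End_{\Tr(\ASBim_{nd})}(\Tr(\Omega^{md}_{nd}))$ via the standard two-step reduction: (i) use the explicit form of the $\Hom$-complex in $\Tr(\mathcal{C})$ together with the vanishing results of Elias (degree-by-degree decoupling of $\Omega^k T_\alpha$'s) and Libedinsky--Williamson (Theorem \ref{thm:LW}, that $\Hom(T_u,T_v)$ vanishes unless $u\preceq v$) to cut the bar complex down to a manageable subcomplex, and (ii) identify the surviving contributions with a semidirect product. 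Concretely, a class in $\Hom_{\Tr}(\Tr(\Omega^{md}_{nd}),\Tr(\Omega^{md}_{nd}))$ is (in $H^0$) represented by elements of $\bigoplus_Z\Hom(\Omega^{md}_{nd}Z,Z\Omega^{md}_{nd})/\!\sim$; the degree-vanishing forces $\deg Z=0$, so $Z$ is (a complex of) ordinary affine Soergel bimodules, and conjugation-invariance under $\omega^{md}_{nd}$ is exactly what pins the answer to objects indexed by the centralizer of $\omega^{md}_{nd}$, which by Lemma \ref{lem: centralizer pi m} is $\widetilde{S_d}$.

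The key steps, in order, would be: First, reduce to the case $d$ arbitrary but treat the group-algebra part and the Hochschild part separately, exactly as in the finite case. For the group-algebra part, show that each generator of $\widetilde{S_d}$ (namely $\omega_{nd}$ and the $\widehat{\sigma_i}=\sigma_i\sigma_{i+d}\cdots\sigma_{i+nd-d}$ of Lemma \ref{lem: centralizer pi m}) gives a genuine endomorphism of $\Tr(\Omega^{md}_{nd})$: the point is that conjugation by $\Omega^{md}_{nd}$ sends $T_i\mapsto T_{i+md}$ and $\Omega\mapsto\Omega$, so an element $w$ in the centralizer produces an isomorphism $\Omega^{md}_{nd}T_w\simeq T_w\Omega^{md}_{nd}$ of bimodules, hence (after closing up) an endomorphism of the trace; and these compose according to the multiplication in $\widetilde{S_d}$. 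Second, produce the Hochschild part: by Lemma \ref{lem: endo trace rouquier}, $R_v=\widetilde R/(\delta, x_i-v(x_i))=R/(x_i-x_{i+md})\cong\C[x_1,\dots,x_d]=:R_d$ acts on $\Tr(\Omega^{md}_{nd})$, and one expects more, namely a full copy of $\HH_*(R_d)$ acting, produced by the same ``dot'' and ``polynomial'' maps used in \cite{GHW} (the Hochschild differential forms arise from the bar-complex tails $\Hom(Z_{j-1},Z_j)$ with $Z_j$ running over Bott--Samelson-type bimodules that contract onto the diagonal $R_d$-bimodule). Third, check that these two families of endomorphisms together generate everything, i.e. that the natural map $\HH_*(R_d)\rtimes\widetilde{S_d}\to\End_{\Tr(\ASBim_{nd})}(\Tr(\Omega^{md}_{nd}))$ is an isomorphism; the injectivity is seen by passing to an open locus (as in Subsection \ref{sub: slope zero}) or by a leading-term/filtration argument, and the surjectivity is the content of the bar-complex reduction in step (i). Finally, taking $H^0$ and discarding positive Hochschild degree yields $\End^0\simeq R_d\rtimes\widetilde{S_d}$.

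The main obstacle I expect is step (i)--(iii) together: controlling the full bar complex of the trace category and proving that nothing survives beyond $\HH_*(R_d)\rtimes\widetilde{S_d}$. In the finite case (\cite{GHW}) this already required substantial work (Proposition 6.20, Theorem 7.6 there), and in the affine, arbitrary-slope setting one must handle the $\omega$-twisted bimodule $\Omega$ and the fact that many $\Hom$'s between products of the $Y_i$ and Rouquier complexes are genuinely unknown (as the text notes, ``remains a major open problem''). The saving grace is that we only need $\Hom$'s \emph{into and out of} the very special object $\Omega^{md}_{nd}$ (a pure power of $\Omega$, no $Y_i$'s), for which the Elias decoupling and Theorem \ref{thm:LW} give strong vanishing; the real technical heart is verifying that the centralizer-indexed terms assemble into $\widetilde{S_d}$ with the correct relations and that the Hochschild tails contribute exactly $\HH_*(R_d)$ and not more. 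A clean way to organize this, and likely the route the authors take, is to first establish the $d=1$ case (where the centralizer is just $\langle\omega_{nd}\rangle\cong\Z$ and $R_1=\C[x_1]$, giving $\HH_*(\C[x_1])\rtimes\Z$), and then bootstrap to general $d$ using the $\star$-product compatibility $\Tr(\Omega^{md}_{nd})\simeq (H_{m,n})^{\star d}$ from \eqref{eq: omega from Pmn} together with the cabling functor of Section \ref{sec: cabling}, reducing the general statement to a wreath-product-type computation over the slope-zero answer of \cite{GW}.
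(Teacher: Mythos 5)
Your main line of attack coincides with the paper's proof: reduce the bar complex computing $\End_{\Tr(\ASBim_{nd})}(\Tr(\Omega^{md}_{nd}))$ to the case where all the $Z_i$ are Rouquier complexes $T_{w_i}$ (using the semiorthogonal decomposition coming from Theorem \ref{thm:LW}), observe that the surviving terms are indexed by the centralizer of $\omega^{md}_{nd}$, which is $\widetilde{S_d}$ by Lemma \ref{lem: centralizer pi m}, and identify the bar-complex tails with the complex computing $\HH_*(R_d)$. Three caveats. First, the step you describe loosely as ``conjugation-invariance pins the answer to the centralizer'' is, in the paper, a precise length/Bruhat sandwich: the tails force $w_1\preceq w_2\preceq\cdots\preceq w_k$, while the head term $\Hom(\Omega^{md}_{nd}T_{w_k},T_{w_1}\Omega^{md}_{nd})=\Hom(T_{w_k},T_{w_1'})$ with $w_1'=\omega^{-md}_{nd}w_1\omega^{md}_{nd}$ forces $w_k\preceq w_1'$ with $\ell(w_1')=\ell(w_1)$; together these collapse all $w_i$ to a single $w$ commuting with $\omega^{md}_{nd}$. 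This is the technical heart and your sketch does not quite nail it down. Second, the paper verifies that the classes $[w]$ compose according to the relations of $\widetilde{S_d}$ not by an open-locus or filtration argument (the open locus of Subsection \ref{sub: slope zero} lives on the geometric side and is not available here), but by constructing an action of each $[w]$ on $\End_{\ASBim_{nd}}(\Omega^{md}_{nd})\simeq\widetilde{R_d}$ via an explicit chain of isomorphisms and checking that the resulting representation is faithful. Third, your proposed reorganization --- prove $d=1$ first and bootstrap via the cabling functor --- is circular as the paper is structured: Lemma \ref{lem: slope schurs}, which states that $\Phi^{\frac{m}{n}}$ intertwines the $S_d$-actions, is itself deduced from Theorem \ref{thm: endo omega m}, so the cabling functor cannot be used to produce the $S_d$-action in the first place.
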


\begin{proof}
We follow the proof of \cite[Theorem 7.6]{GHW}, and refer to \cite{GHW} for more details. By \eqref{eq: Hom dg trace}, the chain complex computing  
$\End_{\Tr(\ASBim_{nd})}\left(\Tr(\Omega^{md}_{nd})\right)
$
is generated by sequences of  morphisms
$$
\Hom_{\ASBim_{nd}}\left(\Omega^{md}_{nd} Z_k,Z_1\Omega^{md}_{nd}\right)\otimes \Hom_{\ASBim_{nd}} (Z_1,Z_2)\otimes \cdots \otimes \Hom _{\ASBim_{nd}}(Z_{k-1},Z_k).
$$
By \cite[Theorem 5.11]{GHW} we can use the semiorthogonal decomposition from Theorem \ref{thm:LW} to reduce to the case when $Z_i=T_{w_i}$ for some $w_i\in \widetilde{S_{nd}}$.  The spaces $\Hom(T_{w_i},T_{w_{i+1}})$ are nonzero only if $w_i\preceq w_{i+1}$, in particular $w_1\preceq w_k$ and $\ell(w_1)\le \ell(w_k)$. On the other hand,  
$$
\Hom_{\ASBim_{nd}}\left(\Omega^{md}_{nd} T_{w_k},T_{w_1}\Omega^{md}_{nd}\right)=\Hom_{\ASBim_{nd}}\left(\Omega^{md}_{nd} T_{w_k},\Omega^{md}_{nd} T_{w'_1}\right)=\Hom_{\ASBim_{nd}}(T_{w_k},T_{w'_1})
$$
where $w'_1=\omega_{nd}^{-md}w_1\omega^{md}_{nd}$. This is nonzero only if $w_k\preceq w'_1$, in particular $\ell(w_k)\le \ell(w'_1)=\ell(w_1)$. We conclude that $\ell(w_k)=\ell(w'_1)=\ell(w_1)$, and $w_1\preceq w_2\preceq \ldots\preceq w_k $ implies that $w_1=w_2=\ldots=w_k$. Furthermore, $w_k\preceq w'_1$ implies $w_k=w_1=w'_1$, so $w_1$ commutes with $\omega^{md}_{nd}$. 

We conclude that the complex for $\End_{\Tr(\ASBim_{nd})}(\Tr(\Omega^{md}_{nd}))$  is generated by the products
$$
\Hom_{\ASBim_{nd}}\left(\Omega^{md}_{nd}T_w,T_w\Omega^{md}_{nd}\right)\otimes \Hom_{\ASBim_{nd}}(T_w,T_w)\otimes \cdots \otimes \Hom_{\ASBim_{nd}}(T_w,T_w)\simeq 
$$
$$
R\otimes R\otimes \cdots \otimes R
$$
for $w$ in the centralizer $C(\omega^{md}_{nd})=\left\{w\omega^{md}_{nd}=\omega^{md}_{nd} w\right\}\simeq \widetilde{S_d}$. The last equation follows from Lemma \ref{lem: centralizer pi m}. The bar complex differential does not mix different $w$, but similarly to Lemma \ref{lem: endo trace rouquier} one can check   that it identifies $x_i$ with $x_{i+md}$ and sets $\delta=0$. If $R_d=R/(x_i-x_{i+md})$ then for each $w$ we get the bar complex computing 
$$
\HH_*(R_d)=\C[x_1,\ldots,x_{nd},\theta_1,\ldots,\theta_{nd}]/(x_i-x_{i+md},\theta_i-\theta_{i+md}).
$$
We denote the class of the canonical isomorphism in 
$\Hom_{\ASBim_{nd}}(\Omega^{md}_{nd} T_w,T_w\Omega^{md}_{nd})$ by $[w]$. Then the above computation shows that we have an isomorphism of vector spaces
$$
\End_{\Tr(\ASBim_{nd})}(\Tr(\Omega^{md}_{nd}))\simeq \bigoplus_{w\in C(\omega^{md}_{nd})} [w]\cdot \HH_*(R_d)\simeq  \HH_*(R_d)\rtimes \widetilde{S_d}.
$$
To complete the proof, we need to prove that the composition of various permutations $[w]\in \End_{\Tr(\ASBim_{nd})}(\Tr(\Omega^{md}_{nd}))$ satisfies the relations in the subgroup $C(\omega^{md}_{nd})$. For this, we argue as in \cite[Theorem 7.6]{GHW} and construct an action of $[w]\in \End_{\Tr(\ASBim_{nd})}(\Tr(\Omega^{md}_{nd}))$ on 
$$
\End_{\ASBim_{nd}}(\Omega^{md}_{nd})\simeq \widetilde{R_d}=\widetilde{R}/(x_i-x_{i+md})
$$
by the chain of isomorphisms
$$
\Hom_{\ASBim_{nd}}(\Omega^{md}_{nd},\Omega^{md}_{nd})\simeq \Hom_{\ASBim_{nd}}(\Omega^{md}_{nd} ,\Omega^{md}_{nd} T_wT_w^{-1})\simeq   \Hom_{\ASBim_{nd}}(\Omega^{md}_{nd} T_w,\Omega^{md}_{nd} T_w) \stackrel{\circ [w]}{\simeq}$$
$$\Hom_{\ASBim_{nd}}(T_w\Omega^{md}_{nd} ,\Omega^{md}_{nd} T_w)\simeq   \Hom_{\ASBim_{nd}}(\Omega^{md}_{nd} ,T_w^{-1}\Omega^{md}_{nd} T_w)\simeq  \Hom_{\ASBim_{nd}}(\Omega^{md}_{nd},\Omega^{md}_{nd}).
$$
One can check that this action is compatible with the composition of morphisms in the trace defined in \cite{GHW}, and induces the standard action of $C(\omega^{md}_{nd})$ on $\widetilde{R}_d$ up to a sign. Since this representation is faithful, we obtain the desired result.    
\end{proof}

In particular, we have an action of $S_d \subset \widetilde{S_d}$ on the object $\Tr\left(\Omega^{md}_{nd}\right)$ by endomorphisms. We are ready to define our main players. 

\begin{definition}

We define the {\bf slope $\frac mn$ Schur object} $\Tr\left(\Omega^{md}_{nd}\right)^{\lambda}\in \Tr(\ASBim_{nd})^{\Kar}$ by
$$
\Tr\left(\Omega^{md}_{nd}\right)^{\lambda}=\ee_{\lambda}\Tr\left(\Omega^{md}_{nd}\right).
$$
for any partition $\lambda \vdash d$. In particular, we define
$$
\bar{E}_{md,nd}:=\Tr\left(\Omega^{md}_{nd}\right)^{\mathrm{antisym}}.
$$
\end{definition}

For $m=0$ and $n=1$ we recover 
$
\bar{E}_{0,d}=\Tr\left(\one_d\right)^{\mathrm{antisym}}.
$

\begin{definition}
We define the {\bf slope $\frac mn$ subcategory}  $\Tr\left(\Omega^{md}_{nd}\right)^{\frac mn}$ as the smallest subcategory in $\Tr(\ASBim_{nd})^{\Kar}$ containing $\Tr\left(\Omega^{md}_{nd}\right)$ and closed under direct sums, direct summands, shifts and cones.
\end{definition}

\begin{lemma}
The slope subcategory $\Tr\left(\Omega^{md}_{nd}\right)^{\frac mn}$ is generated by the objects $\Tr\left(\Omega^{md}_{nd}\right)^{\lambda}$ under direct sums, shifts and cones. Furthermore,
$
K_0\left(\Tr\left(\Omega^{md}_{nd}\right)^{\frac mn}\right)
$ is freely generated over $\Z[q,q^{-1}]$ by $\Tr\left(\Omega^{md}_{nd}\right)^{\lambda}$, as $\lambda$ runs over partitions of $d$.
\end{lemma}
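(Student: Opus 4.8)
The plan is to derive both statements from Theorem~\ref{thm: endo omega m}, the slope-zero results of \cite{GW}, and the cabling functor of Section~\ref{sec: cabling}.

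First I would record the basic direct sum decomposition. By Theorem~\ref{thm: endo omega m} the group algebra $\C[S_d]$ embeds in $\End^0_{\Tr(\ASBim_{nd})}(\Tr(\Omega^{md}_{nd}))$, so $1=\sum_{\lambda\vdash d}\ee_\lambda$ writes the identity endomorphism as a sum of pairwise orthogonal idempotents; since $\Tr(\ASBim_{nd})^{\Kar}$ is idempotent complete this gives
$$
\Tr(\Omega^{md}_{nd})\;\simeq\;\bigoplus_{\lambda\vdash d}\Tr(\Omega^{md}_{nd})^{\lambda}
$$
in $\Tr(\ASBim_{nd})^{\Kar}$. Thus each $\Tr(\Omega^{md}_{nd})^{\lambda}$, being a summand of $\Tr(\Omega^{md}_{nd})$, lies in $\Tr(\Omega^{md}_{nd})^{\frac mn}$; conversely $\Tr(\Omega^{md}_{nd})$ lies in the triangulated subcategory generated by the $\Tr(\Omega^{md}_{nd})^{\lambda}$ under direct sums, shifts and cones. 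What must still be shown for the first assertion is that this triangulated subcategory is already closed under direct summands, i.e.\ it contains the whole thick closure of $\Tr(\Omega^{md}_{nd})$. Here I would appeal to the cabling functor $\mathrm{cab}\colon\Tr(\SBim_d)\to\Tr(\ASBim_{nd})$ of Section~\ref{sec: cabling}: it is exact and $S_d$-equivariant and carries $\Tr(\one_d)$ to $\Tr(\Omega^{md}_{nd})$ (up to grading shift), hence $\Tr(\one_d)^{\lambda}$ to $\Tr(\Omega^{md}_{nd})^{\lambda}$; applying it to the resolutions furnished by Theorem~\ref{thm: schurs generate finite}(a) (which expresses every object of $\Tr(\SBim_d)^{\Kar}$ as an iterated cone of shifts of the $\Tr(\one_d)^{\lambda}$) shows that the entire essential image of $\mathrm{cab}$, and in particular $\Tr(\Omega^{md}_{nd})$ together with all of its summands coming from idempotents pulled back along $\mathrm{cab}$, lies in the triangulated span of the $\Tr(\Omega^{md}_{nd})^{\lambda}$. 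I expect this to be the main obstacle: one must verify that no summand appearing in the thick closure of $\Tr(\Omega^{md}_{nd})$ is genuinely new, i.e.\ that the extra affine endomorphisms detected by Theorem~\ref{thm: endo omega m} (the copy of $\widetilde{S_d}$, as opposed to just $S_d$) do not enlarge the thick closure beyond the triangulated span; this is the arbitrary-slope analogue of the work needed to prove Theorem~\ref{thm: schurs generate finite}(a) in \cite{GW}.

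For the Grothendieck group, the generation statement just established shows that $K_0\big(\Tr(\Omega^{md}_{nd})^{\frac mn}\big)$ is spanned over $\Z[q^{\pm 1}]$ by $\{[\Tr(\Omega^{md}_{nd})^{\lambda}]\}_{\lambda\vdash d}$. For freeness it remains to see these classes are $\Z[q^{\pm 1}]$-linearly independent. I would deduce this from the cabling functor once more: it induces a $\Z[q^{\pm 1}]$-linear map $K_0(\Tr(\SBim_d)^{\Kar})\to K_0\big(\Tr(\Omega^{md}_{nd})^{\frac mn}\big)$ taking $[\Tr(\one_d)^{\lambda}]$ to $[\Tr(\Omega^{md}_{nd})^{\lambda}]$, and by Theorem~\ref{thm: schurs generate finite}(b) the source is free over $\Z[q^{\pm 1}]$ with basis $[\Tr(\one_d)^{\lambda}]\leftrightarrow s_\lambda$, $\lambda\vdash d$; so it suffices to show this map is injective. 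One way is to exhibit a one-sided inverse coming from restriction to a tubular neighbourhood of the $(m,n)$-curve; another is to compare with decategorification, where $[\Tr(\Omega^{md}_{nd})^{\lambda}]$ should map to $\phi^{\frac mn}(s_\lambda)$ and the elements $\{\phi^{\frac mn}(s_\lambda)\}_{\lambda\vdash d}$ span a free $\Z[q_1^{\pm 1},q_2^{\pm 1}]$-submodule of the slope subalgebra $\Lambda^{\frac mn}\subset\CS$ by \eqref{eqn:iota k} and \eqref{eqn:elementary}, hence are a fortiori $\Z[q^{\pm 1}]$-independent. Granting the injectivity, $\{[\Tr(\Omega^{md}_{nd})^{\lambda}]\}_{\lambda\vdash d}$ is a free $\Z[q^{\pm 1}]$-basis of $K_0\big(\Tr(\Omega^{md}_{nd})^{\frac mn}\big)$. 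Thus the two points requiring honest work are the closure-under-summands claim in the first part and the $K_0$-injectivity in the second, both of which I would route through the cabling functor.
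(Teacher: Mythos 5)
You have correctly reduced the first assertion to the question you yourself flag as ``the main obstacle'' --- whether the thick closure of $\Tr(\Omega^{md}_{nd})$ contains summands beyond those cut out by the idempotents $\ee_\lambda \in \C[S_d]$ --- but you then leave that question open, and the cabling functor cannot close it: knowing that the essential image of $\Phi^{\frac mn}$ lies in the triangulated span of the $\Tr(\Omega^{md}_{nd})^{\lambda}$ says nothing about idempotents that are not pulled back from $\Tr(\SBim_d)$. The paper closes the gap with a two-step algebraic argument. First, Theorem \ref{thm: endo omega m} shows that the endomorphism algebra of $\Tr(\Omega^{md}_{nd})$ is non-negatively graded in $q$ with exactly $\C[\widetilde{S_d}]$ in degree zero; hence any idempotent endomorphism of a finite direct sum of shifts of $\Tr(\Omega^{md}_{nd})$ has all components in $\C[\widetilde{S_d}]$, and the additive Karoubi closure of $\Tr(\Omega^{md}_{nd})$ is identified with the category of finitely generated projective $\C[\widetilde{S_d}]$-modules (with shifts). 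Second --- and this is precisely the step your proposal is missing --- one must show that the affine part produces no new idempotents: writing $\C[\widetilde{S_d}]=\C[S_d]\ltimes\C[y_1,\dots,y_d]$, Swan's theorem that projective modules over $\C[y_1,\dots,y_d]$ are free implies that every indecomposable projective $\C[\widetilde{S_d}]$-module is $\ee_\lambda\C[\widetilde{S_d}]$ for some $\lambda\vdash d$. Thus every object of the additive Karoubi closure is a direct sum of shifts of the $\Tr(\Omega^{md}_{nd})^{\lambda}$; since that additive category is Karoubian, its pretriangulated hull is Karoubian as well, so passing to cones creates no further summands.

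The $K_0$ statement then falls out of the same classification (unique decomposition into the indecomposables $\Tr(\Omega^{md}_{nd})^{\lambda}$ gives freeness of the split Grothendieck group of the additive closure, which agrees with $K_0$ of its pretriangulated hull), with no need for the separate injectivity argument you sketch. Note that neither of your proposed routes to that injectivity is actually available here: the ``decategorification'' comparison presupposes the conjectural functor of Problem \ref{prob:main}, and the ``tubular neighbourhood'' restriction is not a construction defined anywhere in this framework.
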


\begin{proof}
By Theorem \ref{thm: endo omega m} the endomorphism algebra of $\Tr\left(\Omega^{md}_{nd}\right)$ is non-negatively graded (with respect to the $q$-grading), with $\C[\widetilde{S_d}]$ in degree zero. 

Let $\mathcal{C}_{md,nd}$ be the smallest subcategory in $\Tr(\ASBim_{nd})^{\Kar}$ containing $\Tr\left(\Omega^{md}_{nd}\right)$ and closed under direct sums, direct summands and shifts (but not cones). If $M\in \mathcal{C}_{md,nd}$ is a direct sum of shifts of  $\Tr\left(\Omega^{md}_{nd}\right)$ then any idempotent endomorphism of $M$ has components in $\C[\widetilde{S_d}]$. Therefore the objects in $\mathcal{C}_{md,nd}$ are in bijection with the direct sums of shifts of projective modules over $\C[\widetilde{S_d}]$.

On the other hand, $\C[\widetilde{S_d}]=\C[S_d]\rtimes \C[y_1,\ldots,y_d]$. By a deep theorem of Swan \cite{Swan}, any projective module over $\C[y_1,\ldots,y_d]$ is free, hence any indecomposable projective module over $\C[\widetilde{S_d}]$ is isomorphic to $\ee_{\lambda}\C[\widetilde{S_d}]$ for some $\lambda\vdash d$. By translating back to $\Tr(\ASBim_{nd})^{\Kar}$, we conclude that any object of $\mathcal{C}_{md,nd}$ is isomorphic to a direct sum of $\Tr\left(\Omega^{md}_{nd}\right)^{\lambda}$.

Finally, the category $\mathcal{C}_{md,nd}$ is Karoubian, so its bounded homotopy category is Karoubian as well (see e.g. \cite[Theorem A.10]{GW}, \cite[Section 4]{GHW} and references therein). Therefore any direct summand of a (twisted) complex built from $\Tr\left(\Omega^{md}_{nd}\right)^{\lambda}$ is again a (twisted) complex built from $\Tr\left(\Omega^{md}_{nd}\right)^{\lambda}$.
\end{proof}

\subsection{Cabling functors}
\label{sec: cabling}

We would like to describe the action of $S_d$ on $\Tr(\Omega^{md}_{nd})$ in more concrete terms.

\begin{definition}
Fix $m,n$ and $d$ as above. Given an object $Z\in \SBim_d$, we define the object $\widehat{Z}\in \SBim_{nd}\subset \ASBim_{nd}$ as
$$
\widehat{Z}=\underbrace{Z\star Z\star Z\cdots \star Z}_{n \text{ times}}.
$$
\end{definition}
Recall that since $Z$ is an object of the finite Hecke category, $\widehat{Z}$ can be simply thought of a disjoint union of $n$ copies of the braid $Z$.

\begin{lemma}
\label{lem: Z hat}
For any $Z$ we have $\Omega^{md}_{nd}\widehat{Z}\simeq \widehat{Z}\Omega^{md}_{nd}$. 
\end{lemma}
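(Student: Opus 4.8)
The plan is to show that conjugation by $\Omega^{md}_{nd}$ fixes the ``cabled'' braid $\widehat{Z}$ on the nose, up to the canonical identifications, by reducing to the case of Rouquier complexes of generators and then invoking naturality. First I would recall that $\widehat{Z}$, being a product $Z \star \cdots \star Z$ of $n$ copies of an object of $\SBim_d \subset \ASBim_{nd}$, is built (via tensor product, sums, summands) from the Bott--Samelson bimodules $B_{i}, B_{i+d}, \dots, B_{i+(n-1)d}$ for $i \in \{1,\dots,d-1\}$ and the unit; in braid-group terms, $\widehat{Z}$ is a word in the $\widehat{\sigma_i} = \sigma_i\sigma_{i+d}\cdots\sigma_{i+(n-1)d}$ (and Rouquier complexes thereof). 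So it suffices to check the claim for $Z = B_i$ (equivalently the Rouquier complexes $T_i^{\pm 1}$ cabled up), since conjugation by $\Omega^{md}_{nd}$ is a monoidal autoequivalence of $\ASBim_{nd}$ and hence commutes with $\star$, direct sums, summands, and passing to complexes.

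Next I would use the known conjugation rule $\Omega_{nd} B_j \Omega_{nd}^{-1} \simeq B_{j+1}$ (and likewise $\Omega_{nd} T_j \Omega_{nd}^{-1} = T_{j+1}$), which extends to $\Omega^{md}_{nd} B_j (\Omega^{md}_{nd})^{-1} \simeq B_{j+md}$, indices mod $nd$. Applying this to the cabled generator $\widehat{B_i} = B_i \star B_{i+d} \star \cdots$, conjugation by $\Omega^{md}_{nd}$ sends it to $B_{i+md} \star B_{i+d+md} \star \cdots$, which is exactly the same set of indices $\{i+td : t\in\Z/n\Z\}$ permuted cyclically (because $md$ is a multiple of $d$, and reduction mod $nd$ then mod $d$ is unchanged). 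Hence $\Omega^{md}_{nd}\widehat{B_i} \simeq \widehat{B_i}\,\Omega^{md}_{nd}$. The point is really the elementary observation already encapsulated in Lemma \ref{lem: centralizer pi m}: multiplication by $\omega^{md}_{nd}$ acts on $\Z/nd\Z$ by $i \mapsto i+md$, which permutes each residue class mod $d$ within itself, so the unordered cabling datum is preserved.

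The main obstacle — and the reason this deserves a lemma rather than a one-line remark — is bookkeeping the difference between an \emph{isomorphism} $\Omega^{md}_{nd}\widehat{Z} \simeq \widehat{Z}\,\Omega^{md}_{nd}$ and a \emph{canonical/coherent} one: the individual conjugation isomorphisms $\Omega_{nd} B_j \Omega_{nd}^{-1} \simeq B_{j+1}$ must be composed $md$ times and threaded through the tensor factors in a way compatible with the braid relations, so that the resulting isomorphism is the one used implicitly when we later build the $S_d$-action on $\Tr(\Omega^{md}_{nd})$. I would handle this by fixing once and for all the structural isomorphism $\Omega_{nd}(-)\Omega_{nd}^{-1} \xrightarrow{\sim} \mathrm{sh}$ (shift of indices by $1$) as a monoidal natural isomorphism, taking its $md$-th power, and observing that on $\widehat{Z}$ the induced index-shift-by-$md$ map is realized by a permutation of the $n$ tensor slots (an honest isomorphism of bimodules, since all slots carry the ``same'' object $Z$ up to relabeling variables $x_i \leftrightarrow x_{i+d}$, which is exactly the content of Lemma \ref{lem: endo trace rouquier}-style identifications). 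Everything else is formal: monoidality gives compatibility with $\star$, and closure of $\SBim_d$ under sums/summands/complexes propagates the statement from generators to arbitrary $Z$.
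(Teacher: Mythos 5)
Your proposal is correct and follows essentially the same route as the paper: reduce to $Z=B_i$, apply $\Omega B_j\Omega^{-1}\simeq B_{j+1}$ iterated $md$ times, and observe that adding $md$ permutes the index set $\{i+td \bmod nd\}$ (the residue class of $i$ mod $d$) within itself. The only step you leave implicit is that re-identifying the permuted product with $\widehat{B_i}$ uses the pairwise commutativity of the factors $B_j$ (their indices differ by multiples of $d\geq 2$, so they are non-adjacent), which is exactly the point the paper makes explicit.
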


\begin{proof}
If $d=1$, then $Z\in \SBim_1$ is the identity object, and so is $\widehat{Z}\in \SBim_n$. If $d>1$, it is sufficient to prove the statement for $Z=B_i,\ 1\le i\le d-1$. Note that $\Omega^m B_i=B_{i+m}\Omega^m$ and $\widehat{B_i}=B_iB_{i+d}\cdots B_{i+n-d}$ where all indices are understood modulo $n$.  Therefore
$$
\Omega^{md}_{nd} \widehat{B_i}\Omega^{-md}_{nd}=B_{i+md}B_{i+d+md}\cdots B_{i+nd-d+md}=B_iB_{i+d}\cdots B_{i+nd-d} = \widehat{B_i}.
$$
The middle equation follows from the fact that the factors in $\widehat{B_i}$ pairwise commute, and 
$$\{i,i+d,\ldots,i+nd-d\ \mathrm{mod}\ n\}=\{j\ \mathrm{mod}\ n: j=i\ \mathrm{mod}\ d\}=
$$
$$
\{i+md,i+d+md,\ldots,i+n-d+md\ \mathrm{mod}\ n\}.
$$
\end{proof}

Next, we define the {\em cabling functor} 
$$
\Phi^{\frac{m}{n}}:\Tr(\SBim_d)\to \Tr(\ASBim_{nd})
$$
as follows:

\begin{itemize}[leftmargin=*]

\item On the level of {\bf objects}, we have $\Phi^{\frac{m}{n}}(\Tr(X))=\Tr(\Omega^{md}_{nd} X)$
where we interpret $X$ as an object in $\SBim_d\subset \SBim_{nd}\subset \ASBim_{nd}$ supported on the first $d$ strands.

\item On the level of {\bf morphisms}, for any $Z\in \SBim_d$, $f\in \Hom_{\SBim_d}(XZ,ZY)$ we consider
\begin{equation}
\label{eq: f hat}
\widehat{f} : X\widehat{Z} = (XZ)\star Z\star Z\cdots \star Z \xrightarrow{f\star \Id\star \ldots\star \Id} (ZY)\star Z\star Z\cdots \star Z = \widehat{Z}Y
\end{equation}
and define $\Phi^{\frac{m}{n}}
(f)\in \Hom_{\ASBim_{nd}}\left(\Omega^{md}_{nd}X\widehat{Z},\widehat{Z}\Omega^{md}_{nd}Y\right)$ as the composition
\begin{equation}
\label{eq: Phi morphisms}
\Phi^{\frac{m}{n}}
(f): \Omega^{md}_{nd}X\widehat{Z}\xrightarrow{\text{Id} \otimes \widehat{f}}\Omega^{md}_{nd}\widehat{Z}Y\simeq \widehat{Z}\Omega^{md}_{nd}Y.
\end{equation}
(the $\simeq$ above uses Lemma \ref{lem: Z hat}). We can interpret $\Phi^{\frac{m}{n}}
(f)$ as a morphism in 
$$
\Hom_{\Tr(\ASBim_{nd})}\left(\Tr(\Omega^{md}_{nd}X),\Tr(\Omega^{md}_{nd}Y)\right)=\Hom_{\Tr(\ASBim_{nd})}\left(\Phi^{\frac{m}{n}}\Tr(X),\Phi^{\frac{m}{n}}\Tr(Y)\right).
$$

\item More generally, for a sequence of morphisms 
$$f_1\in \Hom_{\SBim_d}(XZ_k,Z_1Y),f_2\in \Hom_{\SBim_d}(Z_1,Z_2),\ldots,f_k\in \Hom_{\SBim_d}(Z_{k-1},Z_k)$$
representing by \eqref{eq: Hom dg trace} a class in the chain complex $\Hom_{\Tr(\SBim_d)}(X,Y)$ we define
$$\Phi^{\frac{m}{n}}(f_1,\ldots,f_k)=(g_1,\ldots,g_k)$$ where
$$
g_1:\Omega^{md}_{nd}X(Z_k\star Z_1\star\cdots\star Z_1)\xrightarrow{f_1\star \Id\star \cdots \star \Id}\Omega^{md}_{nd}\widehat{Z_1}Y\simeq \widehat{Z_1}\Omega^{md}_{nd}Y,
$$
and 
$$
g_i:Z_{i-1}\star Z_1\star \cdots \star Z_1\xrightarrow{f_i\star \Id\star \cdots \star \Id}Z_{i}\star Z_1\star \cdots \star Z_1
$$
for $i=2,\ldots,k$.
\end{itemize}

One can check that $\Phi^{\frac{m}{n}}$ defined above is a chain map on the respective $\Hom$ complexes and agrees with the composition of morphisms.

\begin{example}
By \eqref{eq: pmn from omega} we have
$$
\Phi^{\frac{m}{n}}(\Tr(T_1
\cdots T_{d-1}))=\Tr(\Omega^{md}_{nd}T_1
\cdots T_{d-1})\simeq H_{md,nd}.
$$
\end{example}

Note that in the above construction $X$ and $Z$ play very different roles. As in Remark \ref{rmk: parallel}, we can interpret $X\in \SBim_d$ topologically as a  pattern for cabling the $(m,n)$ torus knot. Given such a pattern on $d$ strands, we can always arrange for it to be supported on the first $d$ strands. In particular, we do not need to repeat $X$ or consider $\widehat{X}$.

In contrast, $\widehat{Z}$ represents morphisms in the trace which ``wrap around the annulus", see \cite{BHLZ,BPW,GHW,GW} for pictures (in particular, \cite[Section 3, Figures 4-5]{BPW}, \cite[Section 6.4,p.46]{GHW}) and more details. As such, it is naturally spread along $\Tr(\Omega^{md}_{nd})$. 

To phrase this differently, to swap two copies of the $(m,n)$ torus knot in $\Tr(\Omega^{md}_{nd})$, one needs to apply Reidemeister moves everywhere, and repeat them with period $d$, thus reproducing $\widehat{Z}$, as the following Lemma shows.

\begin{lemma}
\label{lem: slope schurs}
a) The functor $\Phi^{\frac{m}{n}}$ intertwines the actions of $S_d$ on $\Tr(\one_d)$ and $\Tr(\Omega^{md}_{nd})$.

b) For all $\lambda\vdash d$ we have 
$$
\Tr(\Omega^{md}_{nd})^{\lambda}=\Phi^{\frac{m}{n}}\left(\Tr(\one_d)^{\lambda}\right).$$ 
 In particular, we get
\begin{equation}
\Phi^{\frac{m}{n}}(\bar{E}_{d})=\bar{E}_{md,nd}.
\end{equation}
Here we extend $\Phi^{\frac{m}{n}}$ to the idempotent completions of the respective categories.
\end{lemma}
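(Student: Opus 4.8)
The plan is to establish part (a) directly from the construction of the cabling functor $\Phi^{\frac mn}$, and then deduce part (b) as a formal consequence together with Theorem \ref{thm: endo omega m}. For part (a), recall that the $S_d$-action on $\Tr(\one_d)$ comes (via Theorem \ref{thm: vertical trace}) from the subgroup $S_d \subset \widetilde{S_d}$ acting by endomorphisms of $\Tr(\one_d)$; concretely, a simple transposition $s_i \in S_d$ is realized by the class $[s_i] \in \End^0_{\Tr(\SBim_d)}(\Tr(\one_d))$ coming from the canonical isomorphism associated to wrapping the Bott-Samelson bimodule $B_i$ around the annulus. Under the identification $\End_{\Tr(\ASBim_{nd})}(\Tr(\Omega^{md}_{nd})) \simeq \HH_*(R_d) \rtimes \widetilde{S_d}$ of Theorem \ref{thm: endo omega m}, the corresponding endomorphism of $\Tr(\Omega^{md}_{nd})$ is the class $[\widehat{\sigma_i}]$ with $\widehat{\sigma_i} = \sigma_i \sigma_{i+d} \cdots \sigma_{i+nd-d}$, as described in Lemma \ref{lem: centralizer pi m}. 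So what must be checked is the single statement: \emph{$\Phi^{\frac mn}$ sends $[s_i] \in \End^0_{\Tr(\SBim_d)}(\Tr(\one_d))$ to $[\widehat{\sigma_i}] \in \End^0_{\Tr(\ASBim_{nd})}(\Tr(\Omega^{md}_{nd}))$.} This is exactly what the definition of $\Phi^{\frac mn}$ on morphisms is designed to do: taking $X = \one_d$, $Z = B_i$, and $f$ the morphism in $\Hom_{\SBim_d}(\one_d B_i, B_i \one_d) = \Hom_{\SBim_d}(B_i, B_i)$ representing $[s_i]$, the recipe \eqref{eq: f hat}--\eqref{eq: Phi morphisms} produces $\widehat{f}: \widehat{B_i} \to \widehat{B_i}$ (the $n$-fold $\star$-power of $f$), and then composes with the commutation isomorphism $\Omega^{md}_{nd}\widehat{B_i} \simeq \widehat{B_i}\Omega^{md}_{nd}$ from Lemma \ref{lem: Z hat}. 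Since $\widehat{B_i} = B_i B_{i+d}\cdots B_{i+nd-d}$ and the wrapping-around-the-annulus class for this product is precisely the product of the wrapping classes for each factor (the factors pairwise commute), the resulting endomorphism is the one associated to the permutation $\sigma_i \sigma_{i+d}\cdots\sigma_{i+nd-d} = \widehat{\sigma_i}$. One then verifies that the braid relations and $s_i^2 = 1$ are respected — but this is automatic because $\Phi^{\frac mn}$ is a chain map compatible with composition (stated right before the Example), so it sends the $S_d$-relations in $\End(\Tr(\one_d))$ to the corresponding relations among the $[\widehat{\sigma_i}]$, which do hold in $\widetilde{S_d}$ by Lemma \ref{lem: centralizer pi m}.

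Given part (a), part (b) is formal. The idempotent $\ee_\lambda \in \C[S_d]$ is a fixed element of the group algebra, so it makes sense in both $\End(\Tr(\one_d))$ and, via the $S_d$-action on $\Tr(\Omega^{md}_{nd})$, in $\End(\Tr(\Omega^{md}_{nd}))$. Extending $\Phi^{\frac mn}$ to Karoubi completions (a dg functor always extends to idempotent completions, sending the image of an idempotent $e$ on $M$ to the image of $\Phi^{\frac mn}(e)$ on $\Phi^{\frac mn}(M)$), and using that $\Phi^{\frac mn}$ intertwines the two $S_d$-actions by part (a), we get
$$
\Phi^{\frac mn}\!\left(\Tr(\one_d)^\lambda\right) = \Phi^{\frac mn}\!\left(\ee_\lambda \Tr(\one_d)\right) = \ee_\lambda\, \Phi^{\frac mn}\!\left(\Tr(\one_d)\right) = \ee_\lambda\, \Tr(\Omega^{md}_{nd}) = \Tr(\Omega^{md}_{nd})^\lambda,
$$
where the middle equality uses that $\Phi^{\frac mn}(\Tr(\one_d)) = \Tr(\Omega^{md}_{nd})$ on objects (the $X = \one_d$ case of the definition). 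Specializing to $\lambda = (1^d)$ gives $\Phi^{\frac mn}(\bar E_d) = \bar E_{md,nd}$.

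The main obstacle is the bookkeeping in the first step: making precise that the wrapping-around-the-annulus class of $\widehat{B_i} = B_i B_{i+d}\cdots B_{i+nd-d}$ equals the product $[\sigma_i][\sigma_{i+d}]\cdots[\sigma_{i+nd-d}]$ inside $\End(\Tr(\Omega^{md}_{nd}))$, and that this product — a priori an element of the big algebra $\HH_*(R_d)\rtimes\widetilde{S_d}$ — lands in the group-algebra part and equals $[\widehat{\sigma_i}]$. This requires tracking through the bar-complex description of morphisms in $\Tr$ from \cite{GHW}, checking that the sequence of morphisms $\Phi^{\frac mn}(f_1,\dots,f_k) = (g_1,\dots,g_k)$ built from a representative of $[s_i]$ maps, under the identification in the proof of Theorem \ref{thm: endo omega m}, to the generator $[\widehat{\sigma_i}]$ with trivial $\HH_*(R_d)$-component; the faithful-representation argument at the end of that proof (the action on $\widetilde{R_d}$) is the tool to pin down the answer on the nose, up to the sign ambiguity that is already present there. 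Everything else — chain-map property, compatibility with composition, extension to Karoubi completions — is either asserted in the excerpt or entirely routine.
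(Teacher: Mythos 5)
Your proposal is correct and follows essentially the same route as the paper: part (a) is checked on the generating endomorphisms (the identity wrapping classes, sent by $\Phi^{\frac mn}$ to the classes of $\widehat{\sigma_i}$, which generate $S_d\subset\widetilde{S_d}$ in the centralizer by Lemma \ref{lem: centralizer pi m} and Theorem \ref{thm: endo omega m}), and part (b) is the same formal chain of equalities. The only cosmetic deviation is that the paper realizes the transpositions by wrapping the Rouquier complexes $T_i$ (matching the semiorthogonal decomposition used in the proof of Theorem \ref{thm: endo omega m}) rather than the Bott--Samelson bimodules $B_i$.
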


\begin{proof}
We have $\Phi^{\frac{m}{n}}(\Tr(\one_d))=\Tr(\Omega^{md}_{nd})$. Furthermore, the action of $S_d$ on $\Tr(\one_d)$ is induced by the identity maps in $\Hom_{\SBim_d}(\one_{d}T_i,T_i\one_d)$, $i=1,\ldots,d-1$.  These correspond to the identity maps in 
$\Hom_{\ASBim_{nd}}(\Omega^{md}_{nd}\widehat{T_i},\widehat{T_i}\Omega^{md}_{nd})$ which generate the action of $S_d$ on $\Tr(\Omega^{md}_{nd})$ by Theorem \ref{thm: endo omega m}, since the corresponding permutations $\widehat{\sigma_i}$ generate a copy of $S_d$ in the centralizer of $\omega^{md}_{nd}$ by Lemma \ref{lem: centralizer pi m}.

Therefore the $S_d$ actions on $\Tr(\one_d)$ and $\Tr(\Omega^{md}_{nd})$ agree under the functor $\Phi^{\frac{m}{n}}$, and
$$
\Phi^{\frac{m}{n}}\left(\Tr(\one_d)^{\lambda}\right)=\Phi^{\frac{m}{n}}(\ee_{\lambda}\Tr(\one_d))=\ee_{\lambda}\Phi^{\frac{m}{n}}(\Tr(\one_d))=\ee_{\lambda}\Tr(\Omega^{md}_{nd}) = \Tr(\Omega^{md}_{nd})^{\lambda}.
$$
\end{proof}

\begin{corollary}
\label{cor: schurs generate in slope 1}
For any $X\in \SBim_d$ the object $\Tr(\Omega^{md}_{nd}X)$ can be resolved by $\Tr(\Omega^{md}_{nd})^{\lambda}$. 
\end{corollary}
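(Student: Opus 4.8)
The plan is to reduce the statement to the finite, slope-zero case already handled in \cite{GW} and then transport the resulting resolution through the cabling functor $\Phi^{\frac mn}$ of Subsection \ref{sec: cabling}. The key point is that $\Phi^{\frac mn}(\Tr(X)) = \Tr(\Omega^{md}_{nd}X)$ holds by the very definition of $\Phi^{\frac mn}$ on objects, while $\Phi^{\frac mn}$ carries the finite Schur objects to the slope $\frac mn$ Schur objects by Lemma \ref{lem: slope schurs}(b); hence any resolution of $\Tr(X)$ by the former is sent to a resolution of $\Tr(\Omega^{md}_{nd}X)$ by the latter.

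Concretely, I would argue in three steps. First, by Theorem \ref{thm: schurs generate finite}(a), the object $\Tr(X)$, viewed in $\Tr(\SBim_d)^{\Kar}$, is homotopy equivalent to a (twisted) complex built out of the objects $\Tr(\one_d)^{\lambda}$, $\lambda \vdash d$, and their grading shifts. Second, I would record that $\Phi^{\frac mn}$, being a dg functor $\Tr(\SBim_d) \to \Tr(\ASBim_{nd})$ (this is the content of the ``one can check'' assertion at the end of Subsection \ref{sec: cabling}) which extends to the idempotent completions, induces a degree-preserving functor on the pretriangulated completions that commutes with shifts and cones; in particular it sends twisted complexes to twisted complexes and homotopy equivalences to homotopy equivalences. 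Third, I would apply $\Phi^{\frac mn}$ to the resolution from the first step and invoke $\Phi^{\frac mn}(\Tr(X)) = \Tr(\Omega^{md}_{nd}X)$ together with $\Phi^{\frac mn}(\Tr(\one_d)^{\lambda}) = \Tr(\Omega^{md}_{nd})^{\lambda}$ from Lemma \ref{lem: slope schurs}(b), concluding that $\Tr(\Omega^{md}_{nd}X)$ is homotopy equivalent to a twisted complex built from the $\Tr(\Omega^{md}_{nd})^{\lambda}$ and their grading shifts.

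The only genuinely nontrivial input is the functoriality of $\Phi^{\frac mn}$: one must check that the assignments \eqref{eq: f hat}--\eqref{eq: Phi morphisms} on longer sequences of morphisms are compatible with the bar-type differential on the trace $\Hom$-complexes \eqref{eq: Hom dg trace} and with the shuffle-product composition, and that the resulting dg functor behaves well under Karoubi completion, so that the idempotent $\ee_{\lambda}$ acting on $\Tr(\one_d)$ is carried to $\ee_{\lambda}$ acting on $\Tr(\Omega^{md}_{nd})$ — precisely what Lemma \ref{lem: slope schurs}(b) records. I expect this bookkeeping around the trace differential to be the main technical obstacle, and I would handle it by verifying the chain-map and composition axioms term by term on the pieces of \eqref{eq: Hom dg trace}, mirroring the analogous verifications in \cite{GHW}. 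Granting this, everything else is formal: an exact (here, dg) functor sends resolutions to resolutions, and the corollary follows immediately.
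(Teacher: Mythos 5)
Your proposal is correct and follows essentially the same route as the paper: resolve $\Tr(X)$ by the objects $\Tr(\one_d)^{\lambda}$ via Theorem \ref{thm: schurs generate finite}, then push the resolution through the cabling functor $\Phi^{\frac mn}$ using Lemma \ref{lem: slope schurs}. Your additional care about the dg-functoriality of $\Phi^{\frac mn}$ addresses exactly the point the paper leaves as a ``one can check'' remark at the end of Subsection \ref{sec: cabling}.
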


\begin{proof}
By Theorem \ref{thm: schurs generate finite} we can resolve $\Tr(X)$ by $\Tr(\one_d)^{\lambda}$. By Lemma \ref{lem: slope schurs} the functor $\Phi^{\frac mn}$ sends $\Tr(X)$ to $\Tr(\Omega^{md}_{nd}X)$ and $\Tr(\one_d)^{\lambda}$ to $\Tr(\Omega^{md}_{nd})^{\lambda}$, so the result follows.
\end{proof}

\begin{corollary}
\label{cor: schurs generate in slope 2}
Suppose that $\gcd(m,n) = 1$ and $d_1+\ldots+d_k=d$. Then the object $$H_{md_1,nd_1}\star\cdots\star H_{md_k,nd_k}\in \Tr(\ASBim_{nd})$$ can be resolved by $\Tr(\Omega^{md}_{nd})^{\lambda}$ or, equivalently, by the products
$$
\bar{E}_{md'_1,nd'_1}\star \cdots \star \bar{E}_{md'_s,nd'_s}
$$
with $d'_1+\ldots+d'_s=d$.
\end{corollary}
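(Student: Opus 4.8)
The plan is to realize the product $H_{md_1,nd_1}\star\cdots\star H_{md_k,nd_k}$ as the image under the cabling functor $\Phi^{\frac mn}$ of a single finite braid on $d$ strands, and then to quote Corollary \ref{cor: schurs generate in slope 1} together with the finite-case results of \cite{GW}. First I would fix the braid $\beta\in\Br_d\subset \Br_{nd}$ given by the horizontal stacking
$$
\beta=(\sigma_1\cdots\sigma_{d_1-1})\star(\sigma_1\cdots\sigma_{d_2-1})\star\cdots\star(\sigma_1\cdots\sigma_{d_k-1}),
$$
i.e. the positive braid lift of the element of $S_d$ whose cycles are the consecutive blocks $\{1,\dots,d_1\}$, $\{d_1+1,\dots,d_1+d_2\}$, \dots, and set $X=T_\beta=T_1\cdots T_{d_1-1}\star\cdots\star T_1\cdots T_{d_k-1}\in\CK(\SBim_d)$, regarded as supported on the first $d$ strands. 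Since $\deg(\beta)=0$ we have $\ell(\omega^{md}_{nd}\beta)=\ell(\beta)=d-k$ and $\Omega^{md}_{nd}X=T_v$ for $v:=\omega^{md}_{nd}\beta\in\widetilde{S_{nd}}$, so that $\Tr(\Omega^{md}_{nd}X)=\Phi^{\frac mn}(\Tr(X))$ by the definition of the cabling functor on objects.

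The key computation is the cycle type of the image $\bar v\in S_{nd}$. The permutation $\omega^{md}_{nd}$, which sends $i\mapsto i+md$, has exactly $d$ cycles in its projection to $S_{nd}$ — namely the residue classes modulo $d$ — each of length $n$ and degree $m$ (their degrees sum to $md$, as they must). The $d_j$ elements of the $j$-th block all lie in $d_j$ distinct residue classes modulo $d$, so multiplying $\omega^{md}_{nd}$ by the corresponding $d_j$-cycle merges exactly those $d_j$ cycles into a single cycle of length $nd_j$; because the degree of an element of $\widetilde{S_{nd}}$ is the sum of the degrees of the cycles of its image and is unchanged upon right multiplication by the degree-zero $\beta$, that merged cycle acquires degree $md_j$. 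As the $k$ blocks exhaust $\{1,\dots,d\}$, the projection of $v$ has exactly $k$ cycles, of lengths $nd_1,\dots,nd_k$ and degrees $md_1,\dots,md_k$. By \cite[Lemma 3.6, Theorem 4.8]{GN Tr} — using, as in the proofs of \eqref{eq: omega from Pmn}–\eqref{eq: pmn from omega}, that $v$ is a minimal length representative of its $\widetilde{S_{nd}}$-conjugacy class — we conclude $\Tr(T_v)\simeq H_{md_1,nd_1}\star\cdots\star H_{md_k,nd_k}$. (Alternatively, this identification can be extracted from \eqref{eq: pmn from omega}, the $\star$-functoriality of $\Tr$ on objects, and the identity $T_\alpha\star T_{\alpha'}=T_{\alpha\star\alpha'}$ for Rouquier complexes.) Hence $H_{md_1,nd_1}\star\cdots\star H_{md_k,nd_k}\simeq\Phi^{\frac mn}(\Tr(X))$, and Corollary \ref{cor: schurs generate in slope 1} — which applies to the complex $X$, since Theorem \ref{thm: schurs generate finite}(a) concerns the pre-triangulated idempotent completion — shows that this object is resolved by the objects $\Tr(\Omega^{md}_{nd})^\lambda$.

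For the second, ``equivalently'', half I would transport the finite-case statement along $\Phi^{\frac mn}$. By Lemma \ref{lem: slope schurs} the functor $\Phi^{\frac mn}$ sends $\Tr(\one_d)^\lambda$ to $\Tr(\Omega^{md}_{nd})^\lambda$ and $\bar E_{d}$ to $\bar E_{md,nd}$, and — being compatible with $\star$ (immediate from the definitions, or from the topological picture of cabling a disjoint union of patterns) — it sends $\bar E_{d'_1}\star\cdots\star\bar E_{d'_s}$ to $\bar E_{md'_1,nd'_1}\star\cdots\star\bar E_{md'_s,nd'_s}$ for every $d'_1+\cdots+d'_s=d$. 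By the Corollary following Theorem \ref{thm: schurs generate finite}, the objects $\Tr(\one_d)^\lambda$ and the products $\bar E_{d'_1}\star\cdots\star\bar E_{d'_s}$ are mutually resolvable in $\Tr(\SBim_d)^{\Kar}$ (each being built from the other by iterated cones and summands). Applying the dg functor $\Phi^{\frac mn}$ transports these resolutions, so in $\Tr(\ASBim_{nd})^{\Kar}$ the objects $\Tr(\Omega^{md}_{nd})^\lambda$ and the products $\bar E_{md'_1,nd'_1}\star\cdots\star\bar E_{md'_s,nd'_s}$ are likewise mutually resolvable. Combined with the first half, this shows $H_{md_1,nd_1}\star\cdots\star H_{md_k,nd_k}$ is resolved by either family, which is the assertion.

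The hard part will be the identification in the first two paragraphs, and within it the verification that $v=\omega^{md}_{nd}\beta$ has minimal length in its conjugacy class — this is precisely where one must invoke the combinatorics of minimal length elements from \cite{GN Tr}; once that input is granted, everything else is formal given Corollary \ref{cor: schurs generate in slope 1}, Lemma \ref{lem: slope schurs}, and \cite{GW}. The only other point needing a (routine) check is the compatibility of the cabling functor $\Phi^{\frac mn}$ with the product $\star$.
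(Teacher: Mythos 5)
Your proof is essentially correct but, for $k>1$, takes a genuinely different route from the paper. You realize the whole product $H_{md_1,nd_1}\star\cdots\star H_{md_k,nd_k}$ as $\Phi^{\frac mn}(\Tr(T_\beta))$ for a single finite braid $\beta$ on $d$ strands and then apply Corollary \ref{cor: schurs generate in slope 1} once; the paper instead resolves each factor $H_{md_i,nd_i}$ separately by $\Tr(\Omega^{md_i}_{nd_i})^{\lambda_i}$ (the $k=1$ case, which is exactly your argument with $\beta=\sigma_1\cdots\sigma_{d-1}$), takes the $\star$-product of these resolutions, and uses the compatibility of the $S_{d_1}\times\cdots\times S_{d_k}$-actions under the isomorphism $\Tr(\Omega^{md_1}_{nd_1})\star\cdots\star\Tr(\Omega^{md_k}_{nd_k})\simeq\Tr(\Omega^{md}_{nd})$ from \eqref{eq: omega from Pmn} to decompose each $\Tr(\Omega^{md_1}_{nd_1})^{\lambda_1}\star\cdots\star\Tr(\Omega^{md_k}_{nd_k})^{\lambda_k}$ into a direct sum of $\Tr(\Omega^{md}_{nd})^{\lambda}$'s (with Littlewood--Richardson multiplicities). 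What your route buys is a single uniform application of the cabling functor; what it costs is precisely the step you flag as ``the hard part'': to invoke \cite[Theorem 4.8]{GN Tr} you must verify that $v=\omega^{md}_{nd}\beta$ is a minimal-length representative of its conjugacy class in $\widetilde{S_{nd}}$ for arbitrary block data $(d_1,\dots,d_k)$, a combinatorial claim the paper never needs in this generality (it only uses the two cases $\beta=1$ and $\beta=\sigma_1\cdots\sigma_{d-1}$ established earlier). Your cycle-type computation for $\bar v$ is correct, and the claim is very plausible ($\ell(v)=d-k$ matches the expected minimum $\sum(d_j-1)$), but as written it is an unverified input; your parenthetical alternative via \eqref{eq: pmn from omega} and $\star$-functoriality of $\Tr$ on Rouquier complexes is closer to the paper's logic and avoids it. The second half of your argument (transporting the mutual resolvability of $\Tr(\one_d)^{\lambda}$ and the products $\bar E_{d'_1}\star\cdots\star\bar E_{d'_s}$ along $\Phi^{\frac mn}$ using Lemma \ref{lem: slope schurs}) matches the paper's intent, modulo the same $\star$-compatibility of $\Phi^{\frac mn}$ that the paper also uses without detailed proof.
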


\begin{proof}
For $k=1$ we can use \eqref{eq: pmn from omega} to write $H_{m,n}=\Tr(\Omega^{md}_{nd}T_1\cdots T_{d-1})$ and apply Corollary \ref{cor: schurs generate in slope 1}. See also Theorem \ref{thm: ribbon Schurs slope} below for a more explicit computation.

For $k>1$  we can use \eqref{eq: omega from Pmn} and write
$$
\Tr(\Omega_{nd_1}^{md_1})\star\cdots \star\Tr(\Omega_{nd_k}^{md_k})\simeq \left(H_{m,n}\right)^{\star d_1}\star\cdots \star \left(H_{m,n}\right)^{\star d_k}\simeq 
\left(H_{m,n}\right)^{\star d}\simeq \Tr(\Omega^{md}_{nd}).
$$
The isomorphism is compatible with the action of $S_{d_1}\times \cdots\times S_{d_k}\subset S_{d_1+\ldots+d_k}$, hence any product
$$\Tr(\Omega_{nd_1}^{md_1})^{\lambda_1}\star \cdots \star\Tr(\Omega_{nd_k}^{md_k})^{\lambda_k}
$$
decomposes as a direct sum of $\Tr(\Omega^{md}_{nd})^{\lambda}$. The multiplicities are given by the coefficients of Schur functions $s_{\lambda}$ in the product $s_{\lambda_1}\cdots s_{\lambda_k}$.
By the $k=1$ case, we can resolve 
$H_{md_i,nd_i}$ by $\Tr(\Omega_{nd_i}^{md_i})^{\lambda_i}$, and hence we can resolve the product of $H_{md_i,nd_i}$ by $\Tr(\Omega^{md}_{nd})^{\lambda}$.
\end{proof}

In Section \ref{sec: ribbon Schur top} we provide an explicit computation of such a resolution for $k=1$. We also conjecture that this is a special case of a more general phenomenon.

\begin{conjecture}
The products
\begin{equation}
\label{eq: convex path schurs}
\bar{E}_{m_1d_1,n_1d_1}\star\cdots \star \bar{E}_{m_kd_k,n_kd_k}
\end{equation}
where $\sum n_id_i=n$ and $\frac{m_1}{n_1}\le \cdots \le \frac{m_k}{n_k}$ generate the idempotent completion $\Tr(\ASBim_n)^{\Kar}$. 
\end{conjecture}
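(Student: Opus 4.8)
The reduction of the conjecture to the single--slope case is already within reach; the real content is the passage to the idempotent completion, and here is the strategy I would pursue. By Theorem \ref{thm: convex paths} every object of $\Tr(\ASBim_n)$ is homotopy equivalent to a complex built out of products $H_{m_1d_1,n_1d_1}\star\cdots\star H_{m_kd_k,n_kd_k}$ with $\sum_i n_id_i=n$ and $\frac{m_1}{n_1}\le\cdots\le\frac{m_k}{n_k}$. Collecting the factors into maximal runs of equal slope writes such a product as $P^{(1)}\star\cdots\star P^{(r)}$, where each $P^{(j)}$ is a $\star$--product of objects $H_{\bullet}$ of a single slope $s_j$ with $s_1<\cdots<s_r$, and so is of the form treated by Corollary \ref{cor: schurs generate in slope 2}: it is homotopy equivalent to a complex whose terms are products of Schur objects $\bar{E}_{\bullet}$ all of slope $s_j$. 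Since $\star$ preserves homotopy equivalences, shifts, direct sums and cones in each variable at the level of objects, one resolves $P^{(1)}\star\cdots\star P^{(r)}$ block by block and passes to total complexes; concatenating the $\bar{E}$--factors of the various blocks keeps the slopes non--decreasing exactly because the blocks were ordered by increasing slope, and one concludes --- as already noted after Theorem \ref{thm: intro H via E bar} --- that every object of $\Tr(\ASBim_n)$ is homotopy equivalent to a complex built from the objects \eqref{eq: convex path schurs}.

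It remains to pass to $\Tr(\ASBim_n)^{\Kar}$. Since every object of the idempotent completion is a direct summand of an object of $\Tr(\ASBim_n)$, it suffices to show that a direct summand of a complex built from the objects \eqref{eq: convex path schurs} is again homotopy equivalent to such a complex. The plan is to generalise Theorem \ref{thm: endo omega m} from a single slope to a convex path of slopes: compute the endomorphism algebra of
$$
\Tr(\Omega^{m_1d_1}_{n_1d_1})\star\cdots\star\Tr(\Omega^{m_kd_k}_{n_kd_k}), \qquad \frac{m_1}{n_1}\le\cdots\le\frac{m_k}{n_k},
$$
and show that it is non--negatively graded with respect to the $q$--grading, with degree--zero part the group algebra $\C[\widetilde{S_{e_1}}\times\cdots\times\widetilde{S_{e_r}}]$, where $e_1,\dots,e_r$ record the total size $\sum d_i$ of the successive runs of equal slope (one uses \eqref{eq: omega from Pmn} to collapse each run to a single $\Tr(\Omega^{\bullet}_{\bullet})$, whose self--centraliser is an affine symmetric group by Lemma \ref{lem: centralizer pi m}). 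Granting this, the argument of the single--slope case, which invokes Swan's theorem \cite{Swan}, shows that every idempotent of this endomorphism ring is, up to conjugation, a product of Young idempotents $\ee_{\lambda_1}\otimes\cdots\otimes\ee_{\lambda_r}$; the corresponding summand is then a product $\Tr(\Omega^{\bullet}_{\bullet})^{\lambda_1}\star\cdots\star\Tr(\Omega^{\bullet}_{\bullet})^{\lambda_r}$, and Lemma \ref{lem: slope schurs} together with the finite--slope resolutions of \cite{GW} (Theorem \ref{thm: schurs generate finite}) rewrite each factor in terms of products of the $\bar{E}_{m^{(j)}d,n^{(j)}d}$, yielding a resolution by the objects \eqref{eq: convex path schurs}. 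A consistency check for this picture is that the resulting Grothendieck group should reproduce the PBW--type decomposition \eqref{eqn:iota k} on the geometric side.

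The main obstacle is precisely the mixed--slope endomorphism computation above. The proof of Theorem \ref{thm: endo omega m} succeeds because, within a single conjugacy class, the $\Hom$--spaces between $\Omega$--twisted Rouquier complexes collapse via the semiorthogonal decomposition of Theorem \ref{thm:LW} and Elias's degree vanishing; for products of genuinely different slopes one instead needs to control $\Hom_{\Tr(\ASBim_n)}$ between products of $\bar{E}$'s of distinct slopes and to establish the $\Hom$--vanishing --- in the direction fixed by the slope ordering --- that would underlie a slope semiorthogonal decomposition of $\Tr(\ASBim_n)^{\Kar}$. This is exactly the ``more refined information about morphisms in $\Tr(\ASBim_n)$'' referred to in the introduction, and it is the topological counterpart of the semiorthogonal decomposition of $D^b(\Coh_{\torus}(\comm_n))$ constructed by Cautis, Pădurariu and Toda in \cite{CPT}; one would expect the topological statement to follow along the same lines.
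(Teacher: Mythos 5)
This statement is a \emph{conjecture} in the paper: the authors give no proof, and immediately after stating it they record exactly what is known and what is missing. What is known is that, by Theorem \ref{thm: convex paths} combined with Corollary \ref{cor: schurs generate in slope 2}, the objects \eqref{eq: convex path schurs} generate $\Tr(\ASBim_n)$ \emph{before} idempotent completion; what is missing is control over the direct summands of the mixed-slope products for $k>1$ (the authors phrase this as not knowing whether these products are indecomposable). Your proposal reproduces this state of affairs rather than improving on it: your first paragraph is a correct (if slightly informal) account of the known generation statement for $\Tr(\ASBim_n)$, and your second and third paragraphs are an unproven programme for the Karoubi-completed statement whose key step --- computing the endomorphism dg algebra of $\Tr(\Omega^{m_1d_1}_{n_1d_1})\star\cdots\star\Tr(\Omega^{m_kd_k}_{n_kd_k})$ for genuinely mixed slopes, or equivalently establishing a slope semiorthogonal decomposition of $\Tr(\ASBim_n)^{\Kar}$ --- is precisely the ``more refined information about morphisms'' that the paper says it lacks. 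So you have not proved the conjecture, but you have correctly identified both the part that is already a theorem and the genuine open gap, and your proposed route (mixed-slope endomorphism algebras, Swan's theorem to classify projectives, matching against the geometric decomposition of \cite{CPT}) is a reasonable and natural one.

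Two smaller caveats on the part you do claim. First, your ``resolve block by block and pass to total complexes'' step tacitly uses that $\star$ is a bifunctor on the trace categories compatible with cones in each variable; the paper only invokes $\star$ at the level of objects and explicitly defers the construction on morphisms to \cite{LMRSW}, so this is itself an assumption (one the paper also makes implicitly). Second, your assertion that the degree-zero part of the mixed-slope endomorphism algebra is $\C[\widetilde{S_{e_1}}\times\cdots\times\widetilde{S_{e_r}}]$ is not established anywhere; \emph{a priori} there could be additional degree-zero morphisms between distinct slope blocks, and ruling these out is exactly the $\Hom$-vanishing you correctly flag as open. As it stands, your text should be read as a refined restatement of the conjecture and its known reductions, not as a proof.
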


Note that by Corollary \ref{cor: schurs generate in slope 2} and Theorem \ref{thm: convex paths} the objects \eqref{eq: convex path schurs} generate $\Tr(\ASBim_n)$. The problem is that we do not know if these objects are indecomposable for $k>1$, otherwise the idempotent completion might also contain their direct summands.

\subsection{Ribbon Schur functions and braids}
\label{sec: ribbon Schur top}

In this section we recall some results from \cite{GW}. 
To a sequence of signs $\varepsilon\in \{\pm \}^{n-1}$, we will associate several objects. First, we can consider the complex 
$$
C_{\varepsilon}=T_1^{\varepsilon_1}\cdots T_{n-1}^{\varepsilon_{n-1}}\in \SBim_n
$$
and its trace $\Tr(C_{\varepsilon})$. For example,
$$
\Tr(C_{+,\ldots,+})=\Tr(T_1\cdots T_{n-1})\simeq H_{0,n}.
$$

Second, we can consider the parabolic subgroups $W_{\varepsilon},W'_{\varepsilon}\subset S_n$ generated by simple reflections with positive (resp. negative) signs. Let $\ee_{\varepsilon}$ and $\ee^{-}_{\varepsilon}$ denote respectively the symmetrizer for $W_{\varepsilon}$ and the antisymmetrizer for $W'_{\varepsilon}$.

As in Section \ref{sub:ribbon} we define ribbon skew Schur functions $\bar{s}_{\varepsilon}$ associated to $\varepsilon$ (these are denoted by $\Psi(\varepsilon)$ in \cite{GW}).
For example, $\varepsilon=(-)^{n-1}$ corresponds to $\bar{s}_{(1^n)}=\bar{e}_n$. We also consider the plethystic substitution 
$
p_d=\bar{p}_d(1-q^d)
$
as in \eqref{eqn:plethysm}, and modified  ribbon skew Schur functions 
\begin{equation}
s_{\varepsilon}=\bar{s}_{\varepsilon}[\bar{p}_d\mapsto p_d].
\end{equation}

\begin{theorem}\cite{solomon}
\label{thm: solomon}
The group algebra $\C[S_n]$ can be presented as a direct sum of left ideals:
\begin{equation}
\label{eq: solomon}
\C[S_n] =\bigoplus_{\varepsilon\in \{\pm\}^{n-1}}\C[S_n]\ee_{\varepsilon}\ee^{-}_{\varepsilon}
\end{equation}
Furthermore, the character of the $S_n$--representation $V_{\epsilon}:=\C[S_n]\ee_{\varepsilon}\ee^{-}_{\varepsilon}$ equals the ribbon skew Schur function $\bar{s}_{\varepsilon}$.
\end{theorem}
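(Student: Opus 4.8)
The plan is to deduce the whole statement by induction from a single \emph{splitting lemma}. For a subset $A\subseteq\{1,\dots,n-1\}$ of simple reflections let $W_A\subseteq S_n$ be the corresponding standard parabolic subgroup, $\ee_A$ its symmetrizer and $\ee^-_A$ its antisymmetrizer, and for disjoint $A\sqcup B\subseteq\{1,\dots,n-1\}$ put $N(A,B):=\C[S_n]\,\ee_A\,\ee^-_B$, a left $\C[S_n]$-module. Thus $V_\varepsilon=N(A,B)$ with $A=\{i:\varepsilon_i=+\}$ and $B=\{i:\varepsilon_i=-\}$, while $N(\emptyset,\emptyset)=\C[S_n]$. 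The key assertion I would prove is: \emph{for disjoint $A,B$ and any index $i\notin A\cup B$,}
$$N(A,B)\;=\;N(A\cup\{i\},B)\;\oplus\;N(A,B\cup\{i\}).$$
Granting this, one fixes any order on $\{1,\dots,n-1\}$ and applies the splitting lemma $n-1$ times, starting from $N(\emptyset,\emptyset)=\C[S_n]$ and splitting off one index at each step; the resulting summands are precisely the $N(A,B)$ with $A\sqcup B=\{1,\dots,n-1\}$, i.e.\ the $V_\varepsilon$, which is exactly \eqref{eq: solomon}.

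For the character statement I would argue by induction on $n$. If $A\cup B\neq\{1,\dots,n-1\}$, then factoring out the parabolic generated by all the non-gap reflections exhibits $N(A,B)$ as an induction $\operatorname{Ind}$ of an outer tensor product of strictly smaller ribbon modules $V_{\eta^{(j)}}$, so by the inductive hypothesis its Frobenius characteristic is the product $\prod_j\bar s_{\eta^{(j)}}$; in particular the all-plus ribbon $V_{(+,\dots,+)}=\C[S_n]\ee_{S_n}$ is the trivial representation, with characteristic $h_n=\bar s_{(+,\dots,+)}$, and the regular representation $N(\emptyset,\emptyset)$ has characteristic $h_1^{\,n}$. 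Applying the splitting lemma at a gap index merges two adjacent ribbon factors, and the ribbon product rule \eqref{eqn:product formula}, $\bar s_\eta\bar s_{\eta'}=\bar s_{\eta,+,\eta'}+\bar s_{\eta,-,\eta'}$, shows that the characteristics are compatible with this merging; starting from the trivial representation and propagating, one obtains $\operatorname{ch}(V_\varepsilon)=\bar s_\varepsilon$ for every $\varepsilon$.

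So the entire argument rests on the splitting lemma — which is, of course, the theorem of \cite{solomon}. The easy points are that $N(A\cup\{i\},B)$ and $N(A,B\cup\{i\})$ are submodules of $N(A,B)$ (immediate from $\ee_{A\cup\{i\}}\ee_A=\ee_{A\cup\{i\}}$ and $\ee^-_{B\cup\{i\}}\ee^-_B=\ee^-_{B\cup\{i\}}$), and that when $i$ is adjacent to neither $A$ nor $B$, so that $s_i$ commutes with $W_A$ and with $W_B$, one simply writes $\ee_{A\cup\{i\}}=\ee_A\tfrac{1+s_i}{2}$ and $\ee_A-\ee_{A\cup\{i\}}=\ee_A\tfrac{1-s_i}{2}$ — a pair of orthogonal idempotents whose right product with $\ee^-_B$ splits $N(A,B)$. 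The main obstacle will be the general case, where $i$ may be adjacent to $A$ or to $B$ and $\ee_A\tfrac{1\pm s_i}{2}$ need no longer lie in $\C[S_n]\ee_A$: there one must extract the two complementary idempotents refining $\ee_A$ from the Hecke-type algebra $\ee_A\C[W_{A\cup\{i\}}]\ee_A$ of $W_A$-double cosets in $W_{A\cup\{i\}}$, and then check that right multiplication by $\ee^-_B$ preserves directness rather than only giving a sum. I expect the cleanest route is to pass to each $S_n$-isotypic block $S^\lambda$ and recast the splitting lemma as $\operatorname{im}(P^+_A P^-_B)=\operatorname{im}(P^+_{A\cup\{i\}}P^-_B)\oplus\operatorname{im}(P^+_A P^-_{B\cup\{i\}})$, where $P^+_A,P^-_B$ are the orthogonal projections of $S^\lambda$ onto its $W_A$-fixed and $W_B$-sign-isotypic subspaces; this reduces everything to understanding how these projections interact with the involution $s_i$, which is the delicate step and where I would follow the original argument of \cite{solomon}.
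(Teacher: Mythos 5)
The paper does not actually prove this statement --- it is quoted from \cite{solomon} --- so your sketch has to stand on its own, and it does not: the splitting lemma on which everything rests is false as stated. While $N(A\cup\{i\},B)\subseteq N(A,B)$ does follow from $\ee_{A\cup\{i\}}=\ee_{A\cup\{i\}}\ee_A$, the identity $\ee^-_{B\cup\{i\}}=\ee^-_{B\cup\{i\}}\ee^-_B$ only shows that $\ee_A\ee^-_{B\cup\{i\}}$ lies in $\C[S_n]\ee^-_B$; it does not place it in $\C[S_n]\ee_A\ee^-_B$, because the factor $\ee_A$ is now separated from $\ee^-_B$. The containment genuinely fails: take $n=3$, $A=\{1\}$, $B=\emptyset$, $i=2$. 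Then $N(A,B)=\C[S_3]\ee_{\{1\}}$ is exactly the set of elements fixed under right multiplication by $s_1$, whereas $\ee_{\{1\}}\ee^-_{\{2\}}=\tfrac14(1+s_1-s_2-s_1s_2)$ satisfies $\ee_{\{1\}}\ee^-_{\{2\}}\, s_1=\tfrac14(1+s_1-s_2s_1-s_1s_2s_1)\neq \ee_{\{1\}}\ee^-_{\{2\}}$. So $N(A,B\cup\{i\})\not\subseteq N(A,B)$ and no internal direct sum $N(A,B)=N(A\cup\{i\},B)\oplus N(A,B\cup\{i\})$ can hold. Your proposed repair via complementary idempotents $f_{\pm}\in\ee_A\C[W_{A\cup\{i\}}]\ee_A$ cannot fix this: any module of the form $\C[S_n]f_-\ee^-_B$ is automatically contained in $N(A,B)$ and therefore can never equal $N(A,B\cup\{i\})$.

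What survives is the weaker claim that $N(A,B)$ is \emph{abstractly} isomorphic to $N(A\cup\{i\},B)\oplus N(A,B\cup\{i\})$ (equivalently, that the ranks of $\ee_A\ee^-_B$, $\ee_{A\cup\{i\}}\ee^-_B$ and $\ee_A\ee^-_{B\cup\{i\}}$ on each irreducible add up); but that statement is essentially equivalent to the theorem and, even granted, only yields $\C[S_n]\cong\bigoplus_{\varepsilon}V_{\varepsilon}$ as abstract modules. To obtain \eqref{eq: solomon} as an internal decomposition you would still need to prove that the specific left ideals $\C[S_n]\ee_{\varepsilon}\ee^-_{\varepsilon}$ jointly span $\C[S_n]$ (or that their sum is direct) --- precisely the content of Solomon's theorem and precisely the step you defer to ``the original argument of \cite{solomon}'', which makes the proposal circular where it matters. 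A self-contained route would be: (i) identify $V_{\varepsilon}$ with the skew Specht module of the ribbon shape (the row and column groups of the standard filling of the ribbon are exactly $W_{\varepsilon}$ and $W'_{\varepsilon}$), which gives the character claim and, via the bijection between permutations and standard ribbon tableaux by descent sets, the count $\sum_{\varepsilon}\dim V_{\varepsilon}=n!$; and (ii) prove spanning by Solomon's inclusion--exclusion identity in the descent algebra. Your gap-filling induction for the characters via \eqref{eqn:product formula} is fine once (i) is in place, but it cannot substitute for (ii).
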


\begin{example}
For $\varepsilon=(+)^{n-1}$ we have $W_{\varepsilon}=S_n$ and $W'_{\varepsilon}$ is trivial, so $V_{\varepsilon}$ is the trivial representation of $S_n$.
For $\varepsilon=(-)^{n-1}$ we have $W_{\varepsilon}$ is trivial and $W'_{\varepsilon}=S_n$, so $V_{\varepsilon}$ is the sign representation of $S_n$.
\end{example}

As a warning to the reader, for general $\varepsilon$ the representation $V_{\varepsilon}$ is not irreducible, see \cite[Example 2.18]{GW}.

We denote by $\pi_{\varepsilon}\in\C[S_n]$ the idempotent projecting to $V_{\varepsilon}$ in the decomposition \eqref{eq: solomon}.
Next, we consider the Koszul complex 
$$
K_n=(\Tr(\one)\otimes \wedge(\theta_1,\ldots,\theta_{n-1}),D)\in \Tr(\SBim_n),\quad D(\theta_i)=x_i-x_{i+1}.
$$
Recall that by Theorem \ref{thm: vertical trace} there is an action of $S_n$ on $\Tr(\one)$ by endomorphisms. This action extends to $K_n$ by declaring that $\theta_1,\ldots,\theta_{n-1}$ span a copy of the reflection representation. Then we have the following:

\begin{theorem}\cite[Theorem 5.25]{GW}
\label{thm: ribbon}
a) For all $\varepsilon$ we have in $\Tr(\SBim_n)^{\Kar}$:
$$
\Tr(C_{\varepsilon})[|\varepsilon|_{+}]=\pi_{\varepsilon}K_n.
$$
In particular,
$
\Tr(C_{+,\ldots,+})[n-1]=K_n^{\mathrm{sym}.}
$

b) The class of $\Tr(C_{\varepsilon})$ in $K_0\left(\Tr(\SBim_n)^{\Kar}\right)$ equals
$$
\left[\Tr(C_{\varepsilon})\right]=\frac{(-1)^{|\varepsilon|_{+}}}{1-q}s_{\varepsilon}.
$$
where $|\varepsilon|_{+}$ is the number of pluses in $\varepsilon$, and $q$ is the grading shift from equation \eqref{eqn:rouquier complexes}.

\end{theorem}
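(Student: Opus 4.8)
The plan is to prove part (a) by an explicit chain-level computation of $\Tr(C_\varepsilon)$ that manifestly realizes the Solomon summand $\pi_\varepsilon K_n$, and to deduce part (b) by passing to $K_0$. For part (a), note that by \eqref{eqn:rouquier complexes} each $T_i^{\varepsilon_i}$ is a two-term complex whose terms are $B_i$ and $\one_n$, the difference between $\varepsilon_i=+$ and $\varepsilon_i=-$ being only which of the two sits in which homological degree (up to a $q$-shift). Expanding the tensor product, I would present $C_\varepsilon=T_1^{\varepsilon_1}\cdots T_{n-1}^{\varepsilon_{n-1}}$ as the total complex of an $(n-1)$-dimensional hypercube whose vertex indexed by $J\subseteq\{1,\dots,n-1\}$ is the Bott--Samelson bimodule $B_J:=B_{j_1}B_{j_2}\cdots B_{j_r}$ with $J=\{j_1<\dots<j_r\}$, placed in homological degree $-\#\{j\in J:\varepsilon_j=+\}+\#\{j\notin J:\varepsilon_j=-\}$, with cube maps built from the (co)units $b_j,b_j^{\ast}$. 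Since the trace functor commutes with totalization, $\Tr(C_\varepsilon)$ is the total complex of the hypercube $J\mapsto\Tr(B_J)$.

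The key input, which I would import from \cite{GHW,GW}, is the identification of $\Tr(B_J)$: closing up a Bott--Samelson bimodule for the word $\prod_{j\in J}s_j$ produces a complex built out of copies of $\Tr(\one_n)$, organized by the maximal runs of consecutive indices in $J$, and compatible with the multiplication operators $x_j-x_{j+1}\in\End^0(\Tr(\one_n))=R\rtimes\C[S_n]$ furnished by Theorem \ref{thm: vertical trace}. Inserting this and cancelling the acyclic summands of the hypercube by Gaussian elimination, $\Tr(C_\varepsilon)$ collapses to a complex of the shape $\Tr(\one_n)\otimes(\text{an exterior model})$ carrying a Koszul-type differential $\theta_i\mapsto x_i-x_{i+1}$, with the homological shift $[|\varepsilon|_+]$ exactly the total degree of the $\one_n$-layers coming from the $\varepsilon_j=+$ strands. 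To recognize the collapsed complex as $\pi_\varepsilon K_n$, one argues that the positive letters of $\varepsilon$ assemble into the parabolic symmetrizer $\ee_\varepsilon$ for $W_\varepsilon$ and the negative letters into the antisymmetrizer $\ee^-_\varepsilon$ for $W'_\varepsilon$, acting on the relevant Koszul generators; Solomon's decomposition $\C[S_n]=\bigoplus_\varepsilon\C[S_n]\ee_\varepsilon\ee^-_\varepsilon$ (Theorem \ref{thm: solomon}) then identifies the $\{\Tr(C_\varepsilon)[|\varepsilon|_+]\}_\varepsilon$ as the complete orthogonal family of Solomon summands of $K_n=\Tr(\one_n)\otimes\wedge(\theta_1,\dots,\theta_{n-1})$, so matching indices gives $\Tr(C_\varepsilon)[|\varepsilon|_+]\simeq\pi_\varepsilon K_n$. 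It is cleanest to dispatch the two extremal cases first — where the assertion reads $\Tr(T_1\cdots T_{n-1})[n-1]=K_n^{\mathrm{sym}}$ (the closure of the positive Coxeter braid symmetrizes all strands) and $\Tr(T_1^{-1}\cdots T_{n-1}^{-1})=K_n^{\mathrm{antisym}}$ — and then glue the general $\varepsilon$ from these along maximal constant-sign runs using the insertion (module) structure on the trace.

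For part (b), apply $K_0(\Tr(\SBim_n)^{\Kar})$, which by Theorem \ref{thm: schurs generate finite} is the ring of degree-$n$ symmetric functions over $\C[q^{\pm1}]$ with $[\Tr(\one_n)^\lambda]=s_\lambda$. Part (a) gives $[\Tr(C_\varepsilon)]=(-1)^{|\varepsilon|_+}[\pi_\varepsilon K_n]$, so it remains to compute $[\pi_\varepsilon K_n]$. Using the regular-representation decomposition $\Tr(\one_n)\simeq\bigoplus_\lambda\Tr(\one_n)^\lambda\otimes V_\lambda$ and Solomon's formula $\mathrm{ch}(V_\varepsilon)=\bar s_\varepsilon$, the class $[\pi_\varepsilon K_n]$ is the image of $\bar s_\varepsilon$ under the operation on symmetric functions induced by tensoring with the Koszul factor $\wedge(\theta_1,\dots,\theta_{n-1})$, the $\theta_i$ spanning the reflection representation $\mathfrak h$ with differential $\theta_i\mapsto x_i-x_{i+1}$. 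A short $\lambda$-ring computation — recording that $\wedge(\mathfrak h)$ with this differential computes the derived tensor product of $\Tr(\one_n)$ with $\C[x]=R/(x_1-x_2,\dots,x_{n-1}-x_n)$ over $R$, i.e. specializes all $x_i$ to one variable — shows this operation is exactly the plethysm $\bar p_d\mapsto p_d=\bar p_d(1-q^d)$ normalized by an overall factor $\tfrac1{1-q}$ coming from the one-dimensional invariant line $\mathbf 1\subset\C^n$. Hence $[\pi_\varepsilon K_n]=\tfrac1{1-q}s_\varepsilon$ and $[\Tr(C_\varepsilon)]=\tfrac{(-1)^{|\varepsilon|_+}}{1-q}s_\varepsilon$.

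The substantive difficulty is entirely in (a): one must lift Solomon's decomposition of $\C[S_n]$ to an honest splitting of the object $K_n$ matching the complexes $\Tr(C_\varepsilon)$, even though these carry no a priori $S_n$-symmetry. Concretely, the work is in (1) the computation of $\Tr(B_J)$ for an arbitrary subset $J$, with full control of the internal grading and of the $\Tr(\one_n)$-module structure, and (2) verifying that after the Gaussian elimination the ``$+$''-part really builds the parabolic symmetrizer $\ee_\varepsilon$ and the ``$-$''-part the antisymmetrizer $\ee^-_\varepsilon$ — and not merely some element of $\C[S_n]$ with the same $K_0$-class; once (a) is in hand, (b) is a routine symmetric-function computation.
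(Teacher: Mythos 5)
A preliminary remark: the paper does not prove this statement — Theorem \ref{thm: ribbon} is imported verbatim from \cite[Theorem 5.25]{GW} — so your proposal has to be measured against the argument given there, not against anything in the present text. Your part (b) is a correct deduction from part (a): in $K_0$ the differential of $K_n$ is irrelevant, $[\pi_{\varepsilon}K_n]$ is the Kronecker product of $\bar{s}_{\varepsilon}$ with the graded character of $\wedge^{\bullet}_{-q}$ of the reflection representation, and this Kronecker product is exactly the plethysm $\bar{p}_d\mapsto \bar{p}_d(1-q^d)$ divided by the factor $1-q$ accounted for by the invariant line; the sign bookkeeping $[\Tr(C_{\varepsilon})]=(-1)^{|\varepsilon|_{+}}[\pi_{\varepsilon}K_n]$ is also right.

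The gap is in part (a), and you have located it yourself in your closing paragraph without closing it. Two concrete points. First, the ``key input'' you import — a computation of $\Tr(B_J)$ for an arbitrary Bott--Samelson product, with full control of the internal grading and of the $R\rtimes\C[S_n]$-module structure — is not available in \cite{GHW,GW}; those references compute traces of special bimodules (e.g.\ $\Tr(B_{w_0})=[n!]_q\bar{E}_n$), not of an arbitrary $B_{j_1}\cdots B_{j_r}$ (already $\Tr(B_1B_2)$ in $\SBim_3$ is not of that form). Without this, the vertices of your hypercube are known only in $K_0$, and the Gaussian elimination cannot be carried out. Second, even granting the collapse to a complex of the shape $\Tr(\one_n)\otimes(\text{exterior model})$, identifying the idempotent that cuts out $\Tr(C_{\varepsilon})[|\varepsilon|_{+}]$ inside $K_n$ with the Solomon idempotent $\pi_{\varepsilon}$ — rather than with some other element of $\C[S_n]$ having the same character — is precisely the content of the theorem. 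Your proposed remedy (``glue the general $\varepsilon$ from the extremal cases along maximal constant-sign runs'') is a paraphrase of the mechanism that \cite{GW} actually implements: the splitting, in the trace, of the skein exact triangle,
$$
\left[q\Tr(C_{\varepsilon})\star \Tr(C_{\varepsilon'})\to \Tr(C_{\varepsilon})\star \Tr(C_{\varepsilon'})\right]\simeq \Tr(C_{\varepsilon,+,\varepsilon'})[1]\oplus \Tr(C_{\varepsilon,-,\varepsilon'}),
$$
recorded in the paper immediately after the theorem, which matches Solomon's recursion $s_{\varepsilon}s_{\varepsilon'}=s_{\varepsilon,+,\varepsilon'}+s_{\varepsilon,-,\varepsilon'}$ and drives an induction on the length of $\varepsilon$. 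Establishing that splitting is the one genuinely hard step, and it appears nowhere in your proposal. As written, this is a plausible outline rather than a proof.
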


The product formula $s_{\varepsilon}s_{\varepsilon'}=s_{\varepsilon,+,\varepsilon'}+s_{\varepsilon,-,\varepsilon'}$ of \eqref{eqn:product formula} also has topological meaning. Let the lengths of $\varepsilon$ and $\varepsilon'$ 
be $n-1$ and $n'-1$ respectively, and consider the braids
$$
C_{\varepsilon}=T_1^{\varepsilon_1}\cdots T_{n-1}^{\varepsilon_{n-1}},\ 
C'_{\varepsilon'}=T_{n+1}^{\varepsilon'_1}\cdots T_{n+n'-1}^{\varepsilon_{n'-1}}.
$$ 
There is an exact triangle relating 
$C_{\varepsilon}C'_{\varepsilon'}=C_{\varepsilon}\star C_{\varepsilon'},$
$C_{\varepsilon,+,\varepsilon'}=C_{\varepsilon}T_nC'_{\varepsilon'}$ and 
$C_{\varepsilon,-,\varepsilon'}=C_{\varepsilon}T_n^{-1}C'_{\varepsilon'}$
which actually splits in the trace \cite[Theorem 5.25]{GW}, so that
$$
\left[q\Tr(C_{\varepsilon})\star \Tr(C_{\varepsilon'})\to \Tr(C_{\varepsilon})\star \Tr(C_{\varepsilon'})\right]\simeq \Tr(C_{\varepsilon,+,\varepsilon'})[1]\oplus \Tr(C_{\varepsilon,-,\varepsilon'}).
$$
On the level of Grothendieck groups one gets
$$
(1-q)[\Tr(C_{\varepsilon})]\cdot [ \Tr(C_{\varepsilon'})]=-[\Tr(C_{\varepsilon,+,\varepsilon'})]+[\Tr(C_{\varepsilon,-,\varepsilon'})],
$$
as expected by Theorem \ref{thm: ribbon} b).

\begin{example}
For $n=2$ we have $\Tr(\one)\simeq \Tr(\one)^{\mathrm{sym}}\oplus \Tr(\one)^{\mathrm{antisym}}$. Then
$$
K_2=\left[q\Tr(\one)\xrightarrow{x_1-x_2}\Tr(\one)\right]
$$
decomposes into
$$
(K_2)^{\mathrm{sym}}=\left[q\Tr(\one)^{\mathrm{antisym}}\xrightarrow{x_1-x_2} \Tr(\one)^{\mathrm{sym}}\right]
$$
and
$$
(K_2)^{\mathrm{antisym}}=\left[q\Tr(\one)^{\mathrm{sym}}\xrightarrow{x_1-x_2} \Tr(\one)^{\mathrm{antisym}}\right].
$$
Note that the actions of $S_2$ on the two copies of $\Tr(\one)$ in $K_2$ differ by a sign, since $x_1-x_2$ is an antisymmetric polynomial.
Furthermore, $$\overline{s}_{+}=\frac{\bar{p}_1^2+\bar{p}_2}{2},\ \overline{s}_{-}=\frac{\bar{p}_1^2-\bar{p}_2}{2}$$
so
$$
\frac{1}{1-q}s_{+}=\frac{(1-q)\bar{p}_1^2+(1+q)\bar{p}_2}{2}=\bar{h}_{2}-q\bar{e}_{2},
$$
while
$$
\frac{1}{1-q}s_{-}=\frac{(1-q)\bar{p}_1^2-(1+q)\bar{p}_2}{2}=\bar{e}_{2}-q\bar{h}_{2}.
$$
\end{example}

Finally, we can apply the functor $\Phi^{\frac{m}{n}}$ to transport these results to an arbitrary slope.

\begin{theorem}
\label{thm: ribbon Schurs slope}
Let $\gcd(m,n)=1$ and $d\ge 1$.
Define the Koszul complex
$$
K_{md,nd}=\left(\Tr(\Omega^{md}_{nd})\otimes \wedge(\theta_1,\ldots,\theta_{d-1}),D\right)\in \Tr(\ASBim_{nd}),\quad D(\theta_i)=x_i-x_{i+1}.
$$
There is a natural action of $S_d$ on $K_{md,nd}$ and for all $\varepsilon\in \{\pm\}^{d-1}$ one has
$$
\Tr(\Omega^{md}_{nd}C_{\varepsilon})\simeq \pi_{\varepsilon}K_{md,nd}.
$$
In particular,
$$
H_{md,nd}\simeq \Tr(\Omega^{md}T_1\cdots T_{d-1})\simeq \Tr(\Omega^{md}C_{+,\ldots,+})=K_{md,nd}^{\mathrm{sym}}.
$$
\end{theorem}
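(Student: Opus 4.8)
The plan is to deduce the theorem from Theorem \ref{thm: ribbon} by applying the cabling functor $\Phi^{\frac mn}\colon\Tr(\SBim_d)\to\Tr(\ASBim_{nd})$ of Section \ref{sec: cabling}, extended (as in Lemma \ref{lem: slope schurs}) to the pre-triangulated and idempotent completions. First I would record the ``dictionary'' under $\Phi^{\frac mn}$. On objects: since $C_\varepsilon=T_1^{\varepsilon_1}\cdots T_{d-1}^{\varepsilon_{d-1}}$ is an object of $\SBim_d\subset\ASBim_{nd}$ supported on the first $d$ strands, the definition of $\Phi^{\frac mn}$ gives $\Phi^{\frac mn}(\Tr(C_\varepsilon))=\Tr(\Omega^{md}_{nd}C_\varepsilon)$; in particular $\Phi^{\frac mn}(\Tr(\one_d))=\Tr(\Omega^{md}_{nd})$. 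On morphisms: I would check, directly from formulas \eqref{eq: f hat}--\eqref{eq: Phi morphisms} applied with $Z=\one_d$, that $\Phi^{\frac mn}$ sends the endomorphism ``multiplication by $x_i$'' of $\Tr(\one_d)$ (under the identification $\End^0_{\Tr(\SBim_d)}(\Tr(\one_d))\simeq R\rtimes\C[S_d]$ of Theorem \ref{thm: vertical trace}, $R=\C[x_1,\dots,x_d]$) to ``multiplication by $x_i$'' of $\Tr(\Omega^{md}_{nd})$ (under the identification $\End^0_{\Tr(\ASBim_{nd})}(\Tr(\Omega^{md}_{nd}))\simeq R_d\rtimes\widetilde{S_d}$ of Theorem \ref{thm: endo omega m}, $R_d\cong\C[x_1,\dots,x_d]$). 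Finally, by Lemma \ref{lem: slope schurs}(a) the functor $\Phi^{\frac mn}$ intertwines the two $S_d$-actions, so it sends every element of $\C[S_d]$ — in particular each Solomon idempotent $\pi_\varepsilon$ of \eqref{eq: solomon}, and the $S_d$-action on the exterior generators $\theta_i$ via the reflection representation — to its namesake.

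With this dictionary in hand, the Koszul complex $K_d=(\Tr(\one_d)\otimes\wedge(\theta_1,\dots,\theta_{d-1}),D)$, $D(\theta_i)=x_i-x_{i+1}$, is a twisted complex built from grading shifts of $\Tr(\one_d)$ whose differential is assembled from the morphisms ``multiply by $x_i-x_{i+1}$''; hence $\Phi^{\frac mn}(K_d)$ is the twisted complex built from the same shifts of $\Tr(\Omega^{md}_{nd})$ with differential $D(\theta_i)=x_i-x_{i+1}$, i.e. $\Phi^{\frac mn}(K_d)=K_{md,nd}$, carrying the asserted $S_d$-equivariant structure. Applying $\Phi^{\frac mn}$ to the identity $\Tr(C_\varepsilon)[|\varepsilon|_+]=\pi_\varepsilon K_d$ of Theorem \ref{thm: ribbon}(a) (with $n$ there replaced by $d$), and using that a dg functor commutes with homological shifts and with the idempotent $\pi_\varepsilon$, I obtain $\Tr(\Omega^{md}_{nd}C_\varepsilon)\simeq\pi_\varepsilon K_{md,nd}$ in $\Tr(\ASBim_{nd})^{\Kar}$ (with the homological shift $[|\varepsilon|_+]$ matching the homological placement of the $\theta_i$ in $K_{md,nd}$). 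For the final ``in particular'' statement I would specialize to $\varepsilon=(+,\dots,+)$: then $C_\varepsilon=T_1\cdots T_{d-1}$ and $\Tr(\Omega^{md}_{nd}T_1\cdots T_{d-1})\simeq H_{md,nd}$ by \eqref{eq: pmn from omega}, while $W_\varepsilon=S_d$ and $W'_\varepsilon$ is trivial, so $V_\varepsilon$ is the trivial $S_d$-representation and $\pi_{(+,\dots,+)}$ is the symmetrizer, whence $\pi_{(+,\dots,+)}K_{md,nd}=K_{md,nd}^{\mathrm{sym}}$.

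I expect essentially everything to be formal once the dictionary is in place; the one point that genuinely requires the explicit construction of $\Phi^{\frac mn}$ is the claim that it preserves the polynomial generators $x_i$ — that is, that under \eqref{eq: Phi morphisms} the endomorphism ``multiply by $x_i$'' of $\Tr(\one_d)$ is carried to ``multiply by $x_i$'' (rather than to $x_i$ plus a correction, or to $x_{i+1}$) on $\Tr(\Omega^{md}_{nd})$. The cleanest way to settle this is to compare with the faithful action on $\widetilde{R_d}=\widetilde{R}/(x_i-x_{i+md})$ constructed in the proof of Theorem \ref{thm: endo omega m}: one checks that the chain of isomorphisms there, precomposed with the image under $\Phi^{\frac mn}$ of the multiplication endomorphism, realizes multiplication by $x_i$ on $\widetilde{R_d}$, and faithfulness forces the identification. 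A secondary bookkeeping point is to track the homological shift $[|\varepsilon|_+]$ of Theorem \ref{thm: ribbon} consistently through the $q$-shifts hidden in the Rouquier complexes $T_i^{\varepsilon_i}$ and through the passage to $\ASBim_{nd}$, so that the shifts on the two sides of the claimed isomorphism agree. This last verification is where I expect the only (mild) difficulty to lie.
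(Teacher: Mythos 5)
Your proposal is correct and follows essentially the same route as the paper: the paper's proof also applies the cabling functor $\Phi^{\frac mn}$ to Theorem \ref{thm: ribbon}, noting that $\Phi^{\frac mn}(K_d)=K_{md,nd}$ and invoking Lemma \ref{lem: slope schurs} to intertwine the $S_d$-actions. The only difference is that you spell out the verification that $\Phi^{\frac mn}$ carries the endomorphism ``multiply by $x_i$'' to its namesake (which the paper dismisses with ``clearly'') and you flag the homological shift $[|\varepsilon|_+]$ bookkeeping, both of which are reasonable points of care rather than departures from the argument.
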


\begin{proof}
We use the functor $\Phi^{\frac{m}{n}}$ which sends $\Tr(\one_d)$ to $\Tr(\Omega^{md}_{nd})$ and $\Tr(C_{\varepsilon})$ to $\Tr(\Omega^{md}_{nd}C_{\varepsilon})$. Clearly, we have 
$$
\Phi^{\frac{m}{n}}(K_d)=K_{md,nd},
$$
and by Lemma \ref{lem: slope schurs} $\Phi^{\frac{m}{n}}$ intertwines the actions of $S_d$ on $\Tr(\one_d)$ and $\Tr(\Omega^{md}_{nd})$, and on $K_d$ and $K_{md,nd}$ respectively, so the result follows.
\end{proof}

\begin{remark}
\label{rem: H schurs slope}
The complex $K_{md,nd}^{\mathrm{sym}}$ has terms 
$$\left(\Tr(\Omega_{nd}^{md})\otimes \wedge^i(\theta_1,\ldots,\theta_{d-1})\right)^{\mathrm{sym}}\simeq \Tr(\Omega_{nd}^{md})^{\lambda_i}
$$
where $\lambda_i=(n-i,1^i)$ are the hook partitions. Thus Theorem \ref{thm: ribbon Schurs slope} provides an explicit resolution of $H_{md,nd}$ by slope $\frac{m}{n}$ Schur objects, which can be further resolved by the products of $\overline{E}_{ma,na}$.
\end{remark}

\end{document}